\def\bbb#1{\textcolor{blue}{#1}}
\definecolor{Chocolat}{rgb}{0.36, 0.2, 0.09}
\definecolor{BleuTresFonce}{rgb}{0.215, 0.215, 0.36}
\DeclareMathAlphabet{\mathbbold}{U}{bbold}{m}{n}
\DeclareSymbolFont{rsfscript}{OMS}{rsfs}{m}{n}
\DeclareSymbolFontAlphabet{\mathrsfs}{rsfscript}
\DeclareFontFamily{OMS}{rsfs}{\skewchar\font'177}
\DeclareFontShape{OMS}{rsfs}{m}{n}{%
      <5> rsfs5
      <6> <7> rsfs7
      <8> <9> <10> rsfs10
      <10.95> <12> <14.4> <17.28> <20.74> <24.88> rsfs10
      }{}
\def\a{\mathfrak{a}}
\newcommand{\ad}{\mathrm{ad}}
\newcommand{\MC}{\mathrm{MC}}
\newcommand{\Tw}{\mathrm{Tw}}
\newcommand{\epi}{\twoheadrightarrow}
\newcommand{\cH}{\mathcal{H}}
\newcommand{\mono}{\rightarrowtail}
\def\Im{\mathrm{Im}}
\def\Ker{\mathrm{Ker}}
\newcommand{\NN}{\mathbb{N}}
\newcommand{\Sy}{\mathbb{S}}
\def\KK{\mathbb{K}}
\newcommand{\ac}{\scriptstyle \text{\rm !`}}
\def\g{\mathfrak{g}}
\def\TTT{\mathcal{T}}
\def\la{\langle}
\def\ra{\rangle}
\def\qi{\xrightarrow{\sim}}
\newcommand{\Tree}{\mathsf{Tree}}
\def\C{\mathcal{C}}
\def\P{\mathcal{P}}
\def\I{\mathrm{I}}
\def\calM{\mathcal{M}}
\DeclareMathOperator{\id}{id}
\DeclareMathOperator{\End}{End}
\DeclareMathOperator{\Hom}{Hom}
\DeclareMathOperator{\sgn}{sgn}
\DeclareMathOperator{\BV}{BV}
\DeclareMathOperator{\sBV}{sBV}
\DeclareMathOperator{\trBV}{tBV}
\DeclareMathOperator{\Com}{Com}
\DeclareMathOperator{\HyperCom}{HyperCom}
\theoremstyle{plain}
\newtheorem {theorem}{Theorem}
\newtheorem {lemma}{Lemma}
\newtheorem {proposition}{Proposition}
\newtheorem*{theoremintro}{Theorem}
\theoremstyle{definition}
\newtheorem {definition}{Definition}
\newtheorem*{remark}{\sc Remark}
\subjclass[2010]{Primary 18G55; Secondary 18D50, 53D45}
\keywords{Givental action,  circle action, cohomological field theory,  Batalin--Vilkovisky algebra, homotopy Lie algebras}
\thanks{S.S. was supported by the Netherlands Organisation for Scientific Research. B.V. was supported by the ANR HOGT grant.}
\begin{document}

\title[Givental Action and Trivialisation of Circle Action]{Givental Action and Trivialisation of Circle Action} 

\author{Vladimir Dotsenko}
\address{School of Mathematics, Trinity College, Dublin 2, Ireland}
\email{vdots@maths.tcd.ie}

\author{Sergey Shadrin}
\address{Korteweg-de Vries Institute for Mathematics, University of Amsterdam, P. O. Box 94248, 1090 GE Amsterdam, The Netherlands}
\email{s.shadrin@uva.nl}

\author{Bruno Vallette}
\address{University Nice Sophia Antipolis, CNRS,  LJAD, UMR 7351, 06100 Nice, France.}
\email{brunov@unice.fr}

\begin{abstract}
In this paper, we show that the Givental group action on genus zero cohomological field theories, also known as formal Frobenius manifolds or hypercommutative algebras, 
naturally arises in the deformation theory of Batalin--Vilkovisky algebras. We prove that the Givental action 
is equal to an action of the trivialisations of the trivial circle action. 
This result relies on the equality of two Lie algebra actions coming from two apparently remote domains: geometry and homotopical algebra. 
\end{abstract}

\maketitle

\setcounter{tocdepth}{1}
\tableofcontents

\section*{Introduction}\label{sec:intro}

In this paper, we study in depth symmetries of algebras over the homology operad of the  moduli spaces of genus $0$ stable curves $H_\bullet(\overline{\mathcal{M}}_{0, n+1})$, known, in different contexts and with small differences in definitions,  under the names of a hypercommutative algebra, a formal Frobenius manifold, a genus $0$ reduction of Gromov--Witten theory, or a genus~$0$ cohomological field theory, see \cite{Manin99}. This structure plays a crucial role in a range of questions arising in string theory, enumerative algebraic geometry, and integrable hierarchies, and is one of the basic structures underlying the classical mirror phenomena in genus $0$. In this paper, we choose to take an \emph{algebraic} viewpoint on this structure, and refer to its instances as \emph{hypercommutative algebras}.

Algebraically, a hypercommutative algebra structure on a graded vector space $A$ is a representation of the operad $\HyperCom:=H_\bullet(\overline{\mathcal{M}}_{0, n+1})$ in the endomorphism operad of~$A$. The operad $\HyperCom$ is well studied; for instance, it is proved to be Koszul in~\cite{Getzler95}. For our purposes, it is important that its internal structure can be described in terms of the intersection theory on the moduli spaces of curves of genus~$0$ and that it is a crucial ingredient in the homotopy theory of Batalin--Vilkovisky algebras \cite{Manin99, DrummondColeVallette11, KhoroshkinMarkarianShadrin13}.

Let us consider the space of hypercommutative algebra structures on a given vector space $A$. Looking at the universal structure of the localisation formulas in Gromov--Witten theory, Givental observed in \cite{Givental01, Givental01bis} that this space is equipped with an action of a big group of ``formal Taylor loops of $GL(A)$'', which we call in this context the \emph{Givental group}. The Givental group is the main tool used to study various universal properties of hypercommutative algebras, and is behind important results in cohomological field theory and its relations to integrable hierarchies, matrix models, mirror symmetry, and homotopical algebra.

\smallskip

Representations of any operad admit a canonical deformation theory,  along the lines of \cite{MerkulovVallette09I}, but  the resulting group action is not an action of such a big group. This phenomenon certainly deserves a conceptual explanation. One such explanation was announced by M.~Kontsevich in 2003, see also~\cite{Costello05}. Basically, Kontsevich made two claims which together explain what is so special about the operad of hypercommutative algebras. First, he claimed that the operad of moduli spaces of genus~$0$ stable curves is a homotopy quotient of the operad of framed little disks by the circle action. Second, he announced that the arising action of changes of trivialisations of a homotopically trivial circle action  on representations of the operad $\HyperCom$ coincides with the action of the Givental group. The first statement is established in \cite{DrummondColeVallette11, DrummondCole11, KhoroshkinMarkarianShadrin13}, and the identification of the Givental action with the action of trivialisations of the circle action is obtained in \cite{KhoroshkinMarkarianShadrin13}. \\

In this paper, we show that the Givental action naturally arises in the deformation theory of  Batalin--Vilkovisky (BV) algebras. In particular, it allows us to prove the above-mentioned claim of Kontsevich about the Givental action in a functorial way, which does not require any computations on the level of the vector space $A$. Moreover, in the more general set-up of homotopy hypercommutative algebras, it turns out that this claim can be formulated in a simpler way, i.e. using trivialisations of a (strictly) trivial circle action.  
\smallskip 

To develop the deformation theory of BV-algebras, we require the more general notion of homotopy BV-algebras. 
Roughly speaking, the data of a homotopy BV-algebra is made up of a homotopy hypercommutative algebra and a homotopy circle action, see \cite{DrummondColeVallette11}. For any Koszul operad, there exists  a dg Lie algebra which controls the deformation theory 
of its algebras, see \cite[Chapter~$12$]{LodayVallette12}. It turns out that in the case of homotopy BV-algebras, one needs to use a certain homotopy Lie algebra $\mathfrak{l}_{\BV}$ . 

\bigskip

The first step we undertake amounts to re-interpreting the infinitesimal Givental action.
 A representation of the operad $\HyperCom$ in a given vector space $A$ is encoded by a Maurer--Cartan element in a certain dg Lie algebra~$\g_{\HyperCom}$. 
 It so happens that this dg Lie algebra is a subalgebra of the homotopy Lie algebra $\mathfrak{l}_{\BV}$, which is  
  an extension of $\g_{\HyperCom}$ by a dg Lie algebra~$\g_{\Delta}=z\End(A)[[z]]$. The degree zero elements of the latter Lie algebra form the Lie algebra of the Givental group. In a homotopy Lie algebra, the degree zero elements define vector fields on the variety of Maurer--Cartan elements, which are infinitesimal gauge symmetries of the Maurer--Cartan elements~\cite{Getzler09}. The first main result of this paper is the following theorem.

\begin{theoremintro}[Thm.~\ref{thm:MainGiv=LinftyAction}]
For any hypercommutative algebra structure on a graded vector space $A$ encoded by a Maurer--Cartan element $\alpha\in\g_{\HyperCom}$ and for any degree $0$ element $r(z)\in\g_{\Delta}$, the infinitesimal Givental action of $r(z)$ on $\alpha$ is equal to the gauge symmetry action of $r(z)$ on $\alpha$ viewed as a homotopy $\BV$-algebra structure:
$$\widehat{r(z)}.\alpha=\ell^\alpha_1(r(z))\ . $$
\end{theoremintro}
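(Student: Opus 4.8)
The plan is to expand both sides in the (co)operadic convolution language underlying $\mathfrak{l}_{\BV}$ and to identify them as natural operations, so that the comparison holds for every $A$ at once rather than through computations on a fixed vector space.

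First I would unwind the right-hand side. Since $\alpha\in\g_{\HyperCom}$ is a Maurer--Cartan element and $r(z)\in\g_\Delta$ is of degree $0$, the gauge vector field is the value at $r(z)$ of the $\alpha$-twisted differential of the homotopy Lie algebra:
$$\ell_1^\alpha(r(z))=\sum_{k\geq 0}\frac{1}{k!}\,\ell_{k+1}\bigl(\underbrace{\alpha,\dots,\alpha}_{k},\,r(z)\bigr).$$
Using the explicit higher brackets of $\mathfrak{l}_{\BV}$ recalled earlier, I would describe each summand as a sum over trees whose $k$ internal vertices are decorated by the hypercommutative operations packaged in $\alpha$ and whose single distinguished input carries the operator $r(z)$. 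A first thing to check is that, thanks to the structure of $\mathfrak{l}_{\BV}$ as an extension by $\g_\Delta$, this vector field is tangent to the subvariety of hypercommutative structures, i.e. lands in $\g_{\HyperCom}$; this is exactly what is needed for the comparison with the Givental action to be meaningful.

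Second, I would recall the infinitesimal Givental action $\widehat{r(z)}.\alpha$ defined earlier and rewrite it in the same language. Writing $r(z)=\sum_{l\geq1}r_l z^l$, acting by $r_l$ reattaches the operator $r_l$ at the output or at an input of a structure operation, the exponent $l$ recording a power of a $\psi$-class. Here the key subtlety is that a $\psi$-class on $\overline{\mathcal{M}}_{0,n+1}$ is a boundary class, so each such reattachment is re-expressed through operadic compositions of the generators $\nu_n$; this is the genus-$0$ splitting mechanism. The Givental action thus becomes a sum of decorated genus-$0$ trees carrying a single $r(z)$-decoration, of the very same combinatorial type as the twisting terms of the first step.

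The matching is the crux of the argument. Rather than evaluating on a fixed $A$, I would compare the two sides as natural operations: the $z$-grading of $\g_\Delta$ is matched with the grading by powers of $\psi$-classes, the component of each bracket dual to the infinitesimal decomposition of the cooperad controlling $\mathfrak{l}_{\BV}$ is matched with the boundary, i.e. operadic composition, contributions, and the two possible positions of the distinguished $r(z)$-input relative to the root account for the output and input reattachments. The main obstacle, and the real content of the theorem, is setting up this operad-level dictionary precisely, that is, proving that the tree operations computing $\ell_{k+1}(\alpha,\dots,\alpha,r(z))$ agree with the Givental reattachment graphs. Once this is in place, reconciling the symmetry factors $1/k!$ and the Koszul signs against the combinatorial weights of the graph sum is routine, and the resulting equality of operations gives the theorem uniformly in $A$.
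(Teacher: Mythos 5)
Your setup matches the paper's: the expansion $\ell_1^\alpha(r(z))=\sum_k\frac{1}{k!}\ell_{k+1}(\alpha,\dots,\alpha,r(z))$, the preliminary check that this vector field lands in $\g_{\HyperCom}$ (the paper's Lemma~\ref{lem:THCom}), and the idea of expressing both sides as tree-shaped operations with a single $r(z)$-decoration. But the step you call ``setting up the operad-level dictionary precisely'' is exactly where your proposal stops and where the entire content of the theorem lies, and the way you propose to finish it would not go through as stated. The obstruction is that a power $\psi_0^k$ (or $\psi_m^k$) in the Givental formula does \emph{not} correspond to any canonical sum of decorated trees: the genus-zero relations expressing $\psi$-classes through boundary divisors (Formulae~\eqref{TRR-0-root} and~\eqref{TRR-0-sym}) depend on a choice of marked points, so the ``reattachment graphs'' you want to match against the brackets of $\mathfrak{l}_{\BV}$ only become well defined after averaging over those choices --- and in one place the averaging must be done with unequal, multiplicity-weighted coefficients. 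This averaging produces the rational weights $\binom{|I|}{2}/\binom{n}{2}$, which are not Koszul signs or $1/k!$ symmetry factors but the combinatorial weights $\omega(v)/\binom{n}{2}$ built into the homotopy $H$ of the deformation retract defining the transferred brackets of $\mathfrak{l}_{\BV}$. Proving that these two systems of weights agree is the theorem; calling their reconciliation ``routine'' inverts the difficulty.

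Concretely, the paper does not construct a term-by-term dictionary at all. It first reduces to comparing values on fundamental cycles (Givental side) and on the generators $(\Im\, Hd_\psi)^{[1]}$ (gauge side), using that both actions preserve hypercommutative structures; it also observes that the Maurer--Cartan condition kills all trees except left combs, so that only $\ell_n(r_{n-2}z^{n-2},\alpha,\dots,\alpha)$ contributes --- a simplification absent from your tree expansion. Then it proceeds by induction on $k$: both $\lambda^{(k)}_n$ (Givental) and $\theta^{(k)}_n$ (gauge) are shown to satisfy the \emph{same} recursion
$$\lambda^{(k+1)}_{n}=\sum_{I\sqcup J=\underline{n}}\frac{\binom{|I|}{2}}{\binom{n}{2}}\,\lambda^{(k)}_{|J|+1}\circ_1\nu_{|I|}-\Bigl(1-\frac{\binom{|I|}{2}}{\binom{n}{2}}\Bigr)\,\nu_{|J|+1}\circ_1\lambda^{(k)}_{|I|}\ ,$$
with the same initial condition (the commutator with $r$ at $k=0$). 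The recursion on the Givental side is extracted from the $\psi$-class divisor relations via the weighted averaging described above; on the gauge side it comes from peeling one bracket off the left comb and tracking how the $H$-weights redistribute between the two factors of a decomposition. Your proposal would need to supply this recursion (or some equivalent mechanism handling the non-canonicity of the $\psi$-class expansion); without it, the asserted matching of ``the very same combinatorial type'' of trees is a statement of the goal, not a proof.
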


We proceed with integrating the infinitesimal action to a group action. 
This allows us to interpret the Givental group in a functorial way using operads as follows. 
We consider the operad $\trBV_\infty$ encoding homotopy BV-algebras together with a bit of extra data: a
trivialisation of the (homotopy) circle action.
We use gauge symmetries to construct  a quasi-isomorphism 
$$G\colon \HyperCom_\infty \to\trBV_\infty\ ,$$
which we call the \emph{Givental morphism}. (This gives  another proof of the first claim of Kontsevich). 
Using the Givental morphism, 
we define a morphism $\widetilde{G}$ from the operad $\HyperCom_\infty$ to  the quotient of the operad $\trBV_\infty$ that encodes homotopy hypercommutative algebras together with a trivialisation of the trivial circle action. 

\smallskip

Let us now explain how this approach allows us to prove the second claim of Kontsevich. 
For a graded vector space $A$, the Givental group can be identified with the group of trivialisations of the trivial circle action on $A$. 
Pulling back a hypercommutative algebra structure on $A$ by the morphism
$\widetilde{G}$, we recover the Givental group action. 

\begin{theoremintro}[Thm.~\ref{thm:MainII}]
Let $\alpha$ be a hypercommutative algebra and let $R(z)$ be a trivialisation of the trivial circle action. The pullback hypercommutative algebra ${\widetilde{G}}^*(\alpha, R(z)-1)$  is equal to the hypercommutative algebra, or CohFT, obtained by the Givental group action of the element $R(z)$. 
\end{theoremintro}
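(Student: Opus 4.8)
The plan is to deduce this group-level equality from the infinitesimal comparison of Theorem~\ref{thm:MainGiv=LinftyAction} by an integration argument, exploiting the pro-unipotent nature of the groups involved. First I would recall that the trivialisations of the trivial circle action on $A$ form a pro-unipotent group $\mathcal{G}$, whose elements are formal series $R(z)=\id+R_1z+R_2z^2+\cdots$ with $R_i\in\End(A)$, and that under the identification described above this group coincides with the Givental group. Both the Givental group action and the assignment $R(z)\mapsto\widetilde{G}^*(-,R(z)-1)$ are then actions of $\mathcal{G}$ on the variety of hypercommutative algebra structures on $A$, and the theorem asserts their equality.

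The second step is to unwind the definition of $\widetilde{G}^*$. Since $\widetilde{G}$ is induced by the Givental morphism $G\colon\HyperCom_\infty\to\trBV_\infty$, which is built from gauge symmetries, I expect the pullback $\widetilde{G}^*(\alpha,R(z)-1)$ to be identified with the result of applying to $\alpha$, viewed as a Maurer--Cartan element of $\mathfrak{l}_{\BV}=\g_{\HyperCom}\oplus\g_{\Delta}$, the gauge transformation determined by $R(z)$, followed by projection onto the hypercommutative part $\g_{\HyperCom}$. The essential point to check here is that $R(z)\mapsto\widetilde{G}^*(-,R(z)-1)$ is a genuine action of $\mathcal{G}$, that is, that composing two trivialisations corresponds to composing the associated gauge transformations; this follows from the compatibility of the operadic composite defining $G$ with the group law on trivialisations.

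The third step is to compute the infinitesimal generator of this pullback action. Differentiating the one-parameter family $R(z)=\id+t\,r(z)+O(t^2)$ and using the construction of $G$ yields that the derivative at $t=0$ of $\widetilde{G}^*(\alpha,R(z)-1)$ in the direction $r(z)$ equals the twisted differential $\ell^\alpha_1(r(z))$ in $\mathfrak{l}_{\BV}$. By Theorem~\ref{thm:MainGiv=LinftyAction} this is precisely the infinitesimal Givental action $\widehat{r(z)}.\alpha$, so the two $\mathcal{G}$-actions share the same infinitesimal generator.

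Finally I would conclude by integration. Since $\mathcal{G}$ is pro-unipotent, hence connected with bijective exponential map, a $\mathcal{G}$-action on a pro-nilpotent variety is determined by its infinitesimal generator. Concretely, for a fixed $R(z)$ one joins it to $\id$ by a path $R(z)_s$, $s\in[0,1]$, and observes that both $s\mapsto R(z)_s.\alpha$ and $s\mapsto\widetilde{G}^*(\alpha,R(z)_s-1)$ solve the same ordinary differential equation $\tfrac{d}{ds}\beta_s=\ell^{\beta_s}_1(\dot r_s)$ with the common initial condition $\beta_0=\alpha$; uniqueness of solutions, guaranteed filtration-degree by filtration-degree in the pro-nilpotent setting, forces the two to coincide at $s=1$. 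The step I expect to be the main obstacle is the second one: verifying that the pullback assembles into an honest action matching the non-abelian group law of $\mathcal{G}$, rather than agreeing merely to first order, which requires careful bookkeeping of how the operadic structure underlying $G$ interacts with the multiplication of trivialisations.
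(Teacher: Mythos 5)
Your proposal is correct and takes essentially the same route as the paper: both sides are recognised as time-one integrals of the same infinitesimal flow, which Theorem~\ref{thm:MainGiv=LinftyAction} identifies on the nose, and the equality follows from existence and uniqueness of the integration (Lemma~\ref{lem:CVt=1}). The obstacle you flag in your second step--compatibility of the pullback with the non-abelian group law on trivialisations--is not actually needed: since the Givental morphism $G$ is itself defined by applying the gauge symmetry generated by $\lambda=-\ln(1+s\,\rho)$ to the identity of $\trBV_\infty$, functoriality of the convolution $L_\infty$-algebra in its operad argument makes $\widetilde{G}^*(\alpha,R(z)-1)$ equal \emph{by construction} to the time-one gauge flow starting at $(\alpha,0,R(z)-1)$, with Proposition~\ref{prop:MCHypercom} ensuring the circle-action and trivialisation components vanish at $t=1$, so one only compares two integral curves of one vector field rather than two group actions.
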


Note that the results of this paper provide us with a direct generalisation of the Givental action to an ($\infty$-groupoid) action on homotopy hypercommutative algebra structures.  Besides its structural importance for the foundations of the Givental theory, this  
creates a framework for  the development of the Gromov--Witten theory on the chain level: a theory where the space of Gromov--Witten classes may be a chain complex (think about the evaluations of differential forms rather than the cohomology classes) that produces a homotopy $\HyperCom$-algebra structure on the cohomology of the target variety.

\subsection*{Layout.} The paper is organised as follows. In Section~\ref{sec:Recol}, we recall the definitions, constructions, and results used in the sequel. Since our main results rely on bringing together methods from two fairly different areas, we made that section quite elaborate to benefit readers coming from either of those areas.  In Section~\ref{sec:HoLie-BV}, we define a particular $L_\infty$-algebra that controls the deformation theory of homotopy BV-algebras. In Section~\ref{sec:GaugeInterpretation}, we identify the infinitesimal Givental action with gauge symmetries inside that  $L_\infty$-algebra.  Finally, in Section~\ref{sec:Givental=Triv}, we integrate the infinitesimal action, give a functorial construction of the Givental group action, and prove that it amounts to the action of trivialisations of the trivial circle action. 
 
\subsection*{Acknowledgements.} This paper was completed during the authors\rq{} stays at University of Amsterdam, Newton Institute for Mathematical Sciences at the University of Cambridge (supported through the programme ``Grothendieck--Teichm\"uller Groups, Deformation and Operads''), Trinity College Dublin (supported through ``Visiting Professorships and Fellowships Benefaction Fund''), and University  Nice Sophia Antipolis. The authors would like to thank these institutions for the excellent working conditions enjoyed during their stay there. The second author would like to thank A.~Losev for the useful discussion of a possible set-up for the  Gromov--Witten theory on the chain level. 

\section{Recollections}\label{sec:Recol}

In this section, we recall necessary background information from various areas invoked in this paper. We assume working knowledge of standard results of homotopical algebra for operads, and encourage the reader to consult~\cite{LodayVallette12} for details on that.

Throughout the text, we work over a field $\KK$ of characteristic $0$. We denote by $s$ the suspension  operator of degree $1$: $(sC)_{\bullet+1}:=sC_\bullet$. We use the ``topologist's notation'' for finite sets, putting $\underline{n}:=\{1, \ldots , n\}$. The notation $\odot$ stands for the `symmetric' tensor product, that is, the quotient of the tensor product under the permutation of terms.

\subsection{Hypercommutative algebras and cohomological field theories}

\begin{definition}[Hypercommutative algebra]
A \emph{hypercommutative algebra} is an algebra over the operad 
$\HyperCom:=H_\bullet(\overline{\mathcal{M}}_{0, n+1})$ made up of the homology of the  Deligne--Mumford--Knudsen moduli spaces of stable genus~$0$ curves.  
\end{definition}
Such a structure is given by a morphism of operads $H_\bullet(\overline{\mathcal{M}}_{0, n+1}) \to \End_A$, and so amounts to a collection of symmetric multilinear maps $\mu_n\colon A^{\otimes n} \to A$ of degree $2(n-2)$ for each $n\ge 2$ that satisfy certain quadratic relations, see \cite{Manin99}. The first of those relations is the associativity of $\mu_2$, and further ones are higher associativity relations mixing operations together, hence the name ``hypercommutative''. 

The operad $H_\bullet(\overline{\mathcal{M}}_{0, n+1})$ is Koszul, with the Koszul dual cooperad $H_\bullet(\overline{\mathcal{M}}_{0, n+1})^{\ac}=H^{\bullet+1}({\mathcal{M}}_{0, n+1})$, the cohomology groups  of the moduli spaces of genus $0$ curves. So the operadic cobar construction  $$\Omega H^{\bullet+1}({\mathcal{M}}_{0, n+1})\stackrel{\sim}{\to} H_\bullet(\overline{\mathcal{M}}_{0, n+1})$$ provides a resolution of the former operad, see \cite{Getzler95}.

\begin{definition}[Homotopy hypercommutative algebras]
A  \emph{homotopy hypercommutative algebra} is an algebra over the operad $\Omega H^{\bullet+1}({\mathcal{M}}_{0, n+1})$. 
\end{definition}

This data amounts to an operadic twisting morphism $H^{\bullet+1}({\mathcal{M}}_{0, n+1}) \to \End_A$. The operations defining such a structure are parametrised by $H^{\bullet+1}({\mathcal{M}}_{0, n+1})$.  Hence, a homotopy hypercommutative algebra structure on a chain complex with trivial differential is made up of an infinite sequence of strata of multilinear operations, whose first stratum forms a hypercommutative algebra. 

\begin{definition}[Genus $0$ CohFT \cite{KontsevichManin94}]
Given a graded vector space $A$, a genus~$0$ \emph{cohomological field theory} (CohFT) on~$A$ is defined as a system of classes $\alpha_n\in H^\bullet(\overline{\calM}_{0,n+1})\otimes\End_A(n)$ satisfying the following properties.
\begin{itemize}
\item[$\diamond$] The classes $\alpha_n$ are equivariant with respect to the actions of the symmetric group $\mathbb{S}_n$ on the labels of marked points and on the factors of $\End_V(n)$.
\item[$\diamond$] The pullbacks via the natural mappings 
$\rho\colon \overline{\calM}_{0,n_1+1}\times \overline{\calM}_{0,n_2+1}\to \overline{\calM}_{0,n_1+n_2}$ 
produce the composition of the multilinear maps at the point corresponding to the preimage of the node  on the first curve:
 $$
\rho^*\alpha_{0,n_1+n_2}=\alpha_{0,n_1+1}\tilde{\circ}_i\, \alpha_{0,n_2+1}\ , 
 $$
where $\tilde{\circ}_i$ incorporates the composition in the endomorphism operad and the K\"unneth isomorphism. 
\end{itemize}  
\end{definition}

\begin{remark}
A CohFT is often required to have a unit $e_1\in A$; this corresponds to making use of the natural mappings $\pi\colon\overline{\calM}_{0,n+1}\to\overline{\calM}_{0,n}$. We shall not force that, and use all necessary formulae without the unit. Also, a CohFT in all genera needs $A$ to have a scalar product, and is defined using the language of modular operads. However, in genus~$0$, it is possible to eliminate it completely on the stage of applying the forgetful functor from modular operads to operads. The main advantage for doing so is to incorporate infinite dimensional spaces. An interested reader is referred to~\cite{DotsenkoShadrinVallette11,KhoroshkinMarkarianShadrin13} for details.  
\end{remark}

Summing up, the above definitions of a hypercommutative algebra  and of a genus~$0$ CohFT are the same. 

\subsection{Intersection theory on moduli spaces}\label{subsec:PsiClasses}

The Givental group action, we discuss below, makes use of the $\psi$-classes on moduli spaces of curves. 

\begin{definition}[$\psi$-classes]
Both the moduli space $\calM_{0,n}$ and its compactification $\overline{\calM}_{0,n}$ have $n$ tautological line bundles $\mathbb{L}_i$. The fibre of $\mathbb{L}_i$ over a point represented by a curve $C$ with marked points $x_1,\dots,x_n$ is equal to the cotangent line $T^*_{x_i}C$. The cohomology class $\psi_i$ of $\overline{\calM}_{0,n}$ is defined as the first Chern class of the line bundle~$\mathbb{L}_i$: $\psi_i=c_1(\mathbb{L}_i)\in H^2(\overline{\calM}_{0,n})$.
\end{definition}

{Recall that one can define the push-forward maps $\rho_*$ on the cohomology using the Poincar\'e duality  and the push-forward on the homology: 
\begin{eqnarray*}
& H^\bullet(\overline{\calM}_{0,n_1+1})\otimes H^\bullet(\overline{\calM}_{0,n_2+1})\to H^\bullet(\overline{\calM}_{0,n_1+1}\times\overline{\calM}_{0,n_2+1})\to 
H_{d-\bullet}(\overline{\calM}_{0,n_1+1}\times\overline{\calM}_{0,n_2+1}) &\\& \to
H_{d-\bullet}(\overline{\calM}_{0,n_1+n_2+1}) \to H^{\bullet+2}(\overline{\calM}_{0,n_1+n_2+1})\ ,& 
\end{eqnarray*}
where the dimension $d$ is equal to $+2n_1+2n_2-8$. Throughout the paper, we will only use the gluing along the point marked by $1$ on the first curve and the point marked by $0$ on the second one. }

{The main ingredients needed for computation with $\psi$-classes are the following formulae. They correspond to the expression the $\psi$-classes in terms of divisors, see e.~g.~\cite[\S VI.3.]{Manin99}.}
 
\begin{proposition}
{The Poincar\'e duals $\beta_n \in H^0(\overline{\calM}_{0,n+1})$  of the fundamental classes of the moduli spaces satisfy the following properties.}
{\begin{itemize}
 \item[$\diamond$] For all $i_1, i_2\in\underline{n}$,
\begin{equation}\label{TRR-0-root}
\psi_0=\sum_{\substack{I\sqcup J=\underline{n}\\ i_1,i_2\in I}} \rho_*(\beta_{|J|+1}\otimes\beta_{|I|})\ .
\end{equation}
 \item[$\diamond$] For all $i \in\underline{n}$,
\begin{equation}\label{TRR-0-sym}
\psi_i+\psi_0=\sum_{\substack{I\sqcup J=\underline{n}\\ i\in I}} \rho_*(\beta_{|J|+1}\otimes\beta_{|I|})\ .
\end{equation}
\end{itemize}}
\end{proposition}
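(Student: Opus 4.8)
The plan is to first translate the right-hand sides into sums of boundary divisor classes, then establish the single-$\psi$-class relation~\eqref{TRR-0-root} as the heart of the matter, and finally deduce~\eqref{TRR-0-sym} from it by a purely combinatorial splitting.

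First I would set up the dictionary between the operadic push-forwards and the geometry. Since $\beta_n=1\in H^0$ is the unit, and the gluing map $\rho$ is a closed immersion onto the corresponding boundary divisor (the two branches of the generic nodal curve are distinguishable by their marked points, so no multiplicity occurs), the class $\rho_*(\beta_{|J|+1}\otimes\beta_{|I|})$ is exactly the Poincar\'e dual of the boundary divisor $D_{A|B}$ whose generic point carries the marked points $A=\{0\}\sqcup J$ on the component retaining $0$ and $B=I$ on the other. Note that $\rho_*$ shifts degree by $2$, so this class lies in $H^2(\overline{\calM}_{0,n+1})$, matching $\psi_0$. Thus the right-hand side of~\eqref{TRR-0-root} is $\sum_{0\in A,\ i_1,i_2\in B}[D_{A|B}]$ and that of~\eqref{TRR-0-sym} is $\sum_{0\in A,\ i\in B}[D_{A|B}]$.

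Next I would prove~\eqref{TRR-0-root}, i.e. $\psi_0=\sum_{0\in A,\,i_1,i_2\in B}[D_{A|B}]$. This is the classical genus~$0$ topological recursion relation, which I would establish by induction on $n$, starting from the base case $\overline{\calM}_{0,4}\cong\mathbb{P}^1$, where $\psi_0$ and each of the three boundary points is the class of a point, and all boundary points are linearly equivalent (so that the right-hand side is visibly independent of the chosen pair $i_1,i_2$). The inductive step uses the forgetful morphism $\pi\colon\overline{\calM}_{0,n+1}\to\overline{\calM}_{0,n}$ forgetting one marked point other than $0$, together with the comparison formula expressing $\psi_0$ as $\pi^*\psi_0$ plus the boundary divisor along which $0$ collides with the forgotten point, and the pullback formula for boundary divisors; one then checks that the divisor sum reorganises correctly. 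I expect this to be the main obstacle: the conceptual input (the base case and the cotangent comparison lemma) is short, but the divisor bookkeeping under pullback, and verifying independence of the choice of $i_1,i_2$ for general $n$, is where the care lies. Alternatively, this relation may simply be quoted from \cite[\S VI.3]{Manin99}.

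Finally I would deduce~\eqref{TRR-0-sym} from~\eqref{TRR-0-root} by a clean splitting argument, which is the pleasant part. Since $n\ge 2$, choose an index $j\in\underline{n}$ with $j\neq i$. Every boundary divisor $D_{A|B}$ contributing to the right-hand side of~\eqref{TRR-0-sym} (those with $0\in A$ and $i\in B$) has $j$ either in $A$ or in $B$, and these two cases are disjoint and exhaustive. The divisors with $j\in B$ sum to $\psi_0$ by~\eqref{TRR-0-root} applied to the pair $(i,j)$; the divisors with $j\in A$ sum to $\psi_i$ by~\eqref{TRR-0-root} with $i$ in the role of the distinguished point and $0,j$ on the opposite side, which is legitimate because $\overline{\calM}_{0,n+1}$ is symmetric in its marked points and $0$ plays no special geometric role. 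Hence $\sum_{0\in A,\,i\in B}[D_{A|B}]=\psi_0+\psi_i$, which is precisely~\eqref{TRR-0-sym}.
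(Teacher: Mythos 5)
Your proof is correct. For comparison: the paper does not actually prove this proposition at all --- it treats both identities as standard facts of intersection theory on $\overline{\calM}_{0,n+1}$ and simply cites Manin's book (\S VI.3) for the expression of $\psi$-classes in terms of boundary divisors. Your argument supplies exactly what that citation hides, and it is organised well: the dictionary $\rho_*(\beta_{|J|+1}\otimes\beta_{|I|})=[D_{\{0\}\sqcup J\,|\,I}]$ (with the correct degree shift by $2$ and no multiplicity, since the gluing map is an isomorphism onto the boundary divisor in genus $0$) reduces everything to divisor identities; relation~\eqref{TRR-0-root} is then precisely the classical statement that $\psi_a$ equals the sum of boundary divisors separating $a$ from two fixed reference points $b,c$, which one either quotes or proves by induction via forgetful maps exactly as you sketch; and your derivation of~\eqref{TRR-0-sym} by picking $j\neq i$ and splitting the partitions with $0\in A$, $i\in B$ according to whether $j\in B$ (contributing $\psi_0$, via the pair $(i,j)$) or $j\in A$ (contributing $\psi_i$, via the $\Sy_{n+1}$-symmetry of $\overline{\calM}_{0,n+1}$, which lets you run the same formula with $i$ as the distinguished point and $0,j$ as references) is clean, exhaustive, and disjoint, so the two contributions add up to the left-hand side. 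The only implicit point worth stating is that unstable terms in the sums (e.g.\ $J=\emptyset$) are absent since the corresponding moduli spaces do not exist, which is consistent with your $\overline{\calM}_{0,4}$ base case; this does not affect correctness.
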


\subsection{Givental  action on CohFTs}\label{subsec:GiventalAction}
 
In the case of genus~$0$ CohFTs, it is possible to extend the action of the Lie algebra~\cite{Lee2009} of the Givental group~\cite{Givental01, Givental01bis} to the Lie algebra $z\End(A)[[z]]$ dropping the assumption on (skew-)symmetry of the components of operators~\cite{KhoroshkinMarkarianShadrin13,Teleman12}. Let us recall the corresponding formulae, which we shall later identify from the homotopy viewpoint. For a genus 0 CohFT given by a system of classes $\alpha_n\in H^\bullet(\overline{\calM}_{0,n+1})\otimes\End_A(n)$, this action is defined by the formula
\begin{multline}\label{Givental-action-genus-0}
(\widehat{r_k z^{k}}.\{\alpha\})_n=
(-1)^{k+1}r_k\circ_1\alpha_{n}\cdot\psi_0^{k}+\sum_{m=1}^n\alpha_{n}\cdot\psi_m^{k}\circ_m r_k+\\+
\sum_{\substack{I\sqcup J=\underline{n}, |I|\ge2,\\ i+j=k-1}}(-1)^{i+1}\, 
\tilde{\rho}_*\left(\big(\alpha_{|J|+1}\cdot\psi_1^j\big)\otimes  \big(r_k \circ_1\alpha_{|I|}\cdot \psi_0^i\big)\right).
\end{multline}
{Here we assume that the output of every operadic element corresponds to the point marked by $0$ on the curve. In the last term, the map $\tilde{\rho}_*$ is defined by $\rho_*\otimes \circ_1$, i.e. an enrichment of the push-forward map on $H^\bullet(\overline{\calM}_{0,n+1})$ with the operadic composition on $\End_A$. }

\subsection{Trees}\label{subsec:ShTrees}

A \emph{reduced rooted tree} is a rooted tree whose vertices have at least one input. We consider the category of reduced rooted trees with leaves labelled bijectively from $1$ to $n$, denoted by $\Tree$. The trivial tree $|$ is considered to be part of $\Tree$.

A \emph{shuffle tree}, see \cite[$\S  2.8$]{Hoffbeck10} and \cite[$\S 3.1$]{DotsenkoKhoroshkin10},  is a reduced planar rooted tree equipped  satisfying the following condition. Suppose that we put labels on all edges by going down from the leaves to the root and labelling each edge by the minimum of the labels of the inputs of its top endpoint. Then, for each vertex, the labels of its inputs, read from left to right, should appear in the increasing order.

\begin{figure}[h]
$$\xymatrix@=1em{
&&&&&6\ar@{-}[dr]&&7\ar@{-}[dl]\\
1\ar@{-}[dr]&4\ar@{-}[d]&8\ar@{-}[dl]&&3\ar@{-}[dr]&5\ar@{-}[d]&*{}\ar@{-}[dl]^{6}&&\\
&*{}\ar@{-}[drr]_{1}&&2\ar@{-}[d]&&*{}\ar@{-}[dll]^3&&&\\
&&&*{}\ar@{-}[d]&&&\\
&&&&&&} $$
\caption{Example of a shuffle tree} \label{Fig:ShTree}
\label{fig:ShuffleTree}
\end{figure}
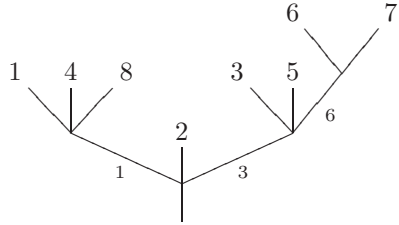

Shuffle trees provide us with choices of planar representatives for trees in space. In the sequel, we will need  shuffle binary trees, that we denote by $\mathsf{SBT}_n$. The labels of the leaves of a shuffle tree $t$, read from left to right, provide us with a permutation $\sigma^t$ of $\Sy_n$. In the example of Figure~\ref{fig:ShuffleTree}, this permutation is $\sigma^t=[14823567]$. \\

The underlying $\Sy$-module of the (conilpotent) cofree cooperad $\TTT^c(M)$ on an $\Sy$-module $M$ is given by the direct sum 
$\bigoplus_{t\in \Tree} t(M)$, where $t(M)$ is the treewise tensor module obtained by labelling every vertex of the tree $t$ with an element of $M$ according to the arity and the action of the symmetric groups. 
Its decomposition map is given by cutting the trees horizontally; see \cite[Chapter~$5$]{LodayVallette12} for more details.

The subcategory of trees with $n$ vertices is denoted by $\Tree^{(n)}$. The number of vertices endows the cofree cooperad $\TTT^c(M)\cong \bigoplus_{n \in \NN} \TTT^c(M)^{(n)}$ with a weight grading.

\subsection{Homotopy Lie algebra}

\begin{definition}[$L_\infty$-algebra]\label{DefPropL_infty}
An \emph{$L_\infty$-algebra} structure on a dg module $(A, d_A)$ is a family of totally skew-symmetric maps
$\ell_n \colon  A^{\otimes n} \to A$ of degree $|\ell_n|=n-2$, for all $n\ge 2$, satisfying the relations
$$\partial_A(\ell_n)=\sum_{\substack{p+q=n+1\\  p, q>1}} \sum_{\sigma\in Sh^{-1}_{p,q-1}} \sgn(\sigma) (-1)^{(p-1)q} (\ell_p \circ_1 \ell_q)^\sigma\ , \quad \text{for} \ n\ge 2\ ,$$
where $\partial_A$ is the differential of $\End_A$ induced by $d_A$ and where $Sh_{p,q-1}$ denotes the set of 
$(p,q-1)$-shuffles.
\end{definition}

For any shuffle binary tree $t$ with $n$ leaves, we consider its underlying planar binary tree $\bar{t}$ with $n-1$ vertices. To this planar binary tree, we associate a permutation of  $\Sy_{n-1}$  as follows. First, we put the vertices on $n-1$ distinct upward levels. 
This means that, among the trees with levels that represent $\bar t$, we choose the tree whose levels of the vertices, which are  at the same level in $\bar t$, go upward when moving from left to right. We label the levels by $\{1, \ldots, n-1\}$ from top to bottom and we label the vertices by $\{1, \ldots, n-1\}$ from left to right. The assignment which gives the level of each vertex defines a permutation $\sigma_{\bar t}$ of $\Sy_{n-1}$. 

\textsc{Example.}
$$\xymatrix@R=20pt@C=20pt{
&&&*+[o][F-]{1} \ar@{--}[ddd] |!{[dd]}\hole &*+[o][F-]{2} \ar@{--}[dddd]|!{[dd]}\hole&*+[o][F-]{3} \ar@{--}[dd]&&& \\
&&&&&&&& \\
*+[o][F-]{1} & \ar@{..}[rrrrr]  &&&& *{}\ar@{-}[ul]\ar@{-}[ur]  \ar@{-->}[lllll] &&& \\
*+[o][F-]{2} &\ar@{..}[rrrrr]&&*{}\ar@{-}[ur]\ar@{-}[ul] \ar@{-->}[lll]   &&  *{}\ar@{-}[u] &&& \\
*+[o][F-]{3} &\ar@{..}[rrrrr]&&&*{}\ar@{-}[ul]*{}\ar@{-}[d] \ar@{-}[ur]  \ar@{-->}[llll]&    &&& \\
&&&&&&&& }
$$
In this example, the associated permutation is $\sigma_{\bar t}=[132]$.

\begin{theorem}[Homotopy Transfer Theorem, see \cite{LodayVallette12}]\label{Thm:TransferThm}
 Let $(V, d_V)$ be a homotopy retract of $(A, d_A)$:
\begin{eqnarray*}
&\xymatrix{     *{ \quad \ \  \quad (A, d_A)\ } \ar@<1ex>@(dl,ul)[]^{h}\ \ar@<0.5ex>[r]^{p} & *{\
(V,d_V)\quad \ \  \ \quad }  \ar@<0.5ex>[l]^{i}}&\\
& \mathrm{id}_A-i p =d_A  h
+ h  d_A,\  i\ \text{quasi-isomorphism}\ .
\end{eqnarray*}
Let the bracket $[ \, , ]\,:  \, A^{\otimes 2} \to A$ endow $A$ 
with a dg Lie algebra structure. 
The maps $\{ \ell_n\,:  \, V^{\otimes n} \to V\}_{n\ge 2}$ defined by 
$$\ell_n:=\sum_{t\in \mathsf{SBT}_n} \sgn(\sigma^t)\sgn(\sigma_{\bar t}) \  p \, t([ \; , \, ], h) \, i^{\otimes n}\ , $$
where the notation $t([ \, ,  ], h)$ stands for the $n$-multilinear operation on $A$ defined by the composition scheme $t$ with vertices labelled by $[ \, ,  ]$ and internal edges labelled by $h$, 
 define an $L_\infty$-algebra structure  on  $V$. 

Moreover, the maps $i_1:=i$ and 
$$i_n:=\sum_{t\in \mathsf{SBT}_n} \sgn(\sigma^t)\sgn(\sigma_{\bar t})   \ h \, t([ \, ,  ], h) \, i^{\otimes n}\ , \ \text{for}\ n\ge 2\ ,  $$
define an $\infty$-quasi-isomorphism from the transferred $L_\infty$-algebra $(V, d_V, \{  \ell_n\}_{n\ge 2})$ to 
the dg Lie algebra $(A, d_A, [ \, ,  ])$. 
\end{theorem}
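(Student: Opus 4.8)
The plan is to recognise both families of maps as the two components of a single morphism of (co)operadic objects and to deduce the $L_\infty$-relations together with the $\infty$-morphism property from one compatibility-with-differentials statement, proved by induction on the number of vertices.

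First I would reformulate the data via twisting morphisms. A dg Lie algebra structure on $A$ is the same as a Maurer--Cartan element $\varphi\in\Hom(\mathrm{Lie}^{\ac},\End_A)$ in the convolution $L_\infty$-algebra, where $\mathrm{Lie}^{\ac}$ is the Koszul dual cooperad of the Lie operad (a suspension of the cocommutative cooperad), with $\varphi$ supported on the binary generator and equal to the bracket $[\,,]$. Dually, an $L_\infty$-structure on $V$ is precisely a Maurer--Cartan element $\psi\in\Hom(\mathrm{Lie}^{\ac},\End_V)$, whose arity-$n$ component records $\ell_n$, and an $\infty$-quasi-isomorphism from $(V,\{\ell_n\})$ to $(A,d_A,[\,,])$ is encoded by a morphism of the corresponding cofree cooperad-comodules (equivalently of cofree cocommutative coalgebras), whose arity-$n$ component records $i_n$. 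In this language both the relations of Definition~\ref{DefPropL_infty} and the defining equations of an $\infty$-morphism become the single assertion that a map out of the cofree cooperad $\TTT^c(\mathrm{Lie}^{\ac})$ commutes with the relevant codifferentials.

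Next I would introduce the map built from the homotopy retract. On the weight-graded cofree cooperad $\TTT^c(\mathrm{Lie}^{\ac})$, define a map $\Psi$ to $\End_A$ on each planar binary tree by labelling every vertex with the bracket $[\,,]$, every internal edge with the homotopy $h$, and every leaf with $i$; this is exactly the operation $t([\,,],h)\,i^{\otimes n}$ in the statement. Composing with $p$ on the root output yields the candidate $\ell_n$ and composing with $h$ on the root output yields the candidate $i_n$, up to the sign $\sgn(\sigma^t)\sgn(\sigma_{\bar t})$. The heart of the argument is then a single identity, proved by induction on the number $n$ of vertices: along each internal edge I substitute the homotopy retract relation $\id_A=i\,p+d_A h+h\,d_A$ and expand the operadic codifferential of $\TTT^c$ as the sum over horizontal one-edge cuts. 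The terms containing $ip$ regroup into the composites of lower-weight expressions, while the $d_A h+h\,d_A$ terms, combined with the facts that $[\,,]$ is a chain map satisfying Jacobi and that $i,p$ are chain maps, reproduce exactly the quadratic tree sums on the right-hand side of the $L_\infty$ relation (after projecting by $p$) and of the $\infty$-morphism relation (after projecting by $h$).

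Finally the sign bookkeeping must be settled. Each shuffle binary tree $t$ carries its leaf permutation $\sigma^t$ and its level permutation $\sigma_{\bar t}$, and I would check that the Koszul sign produced when passing from the planar, levelled composition $t([\,,],h)$ to the totally skew-symmetric operations $\ell_n$ is precisely $\sgn(\sigma^t)\sgn(\sigma_{\bar t})$: the factor $\sgn(\sigma^t)$ accounts for reshuffling the inputs into their natural order, and $\sgn(\sigma_{\bar t})$ for the nesting order of the brackets and homotopies dictated by the levels. The main obstacle is exactly this step, namely matching the purely combinatorial cut-and-insert expansion of the cofree cooperad differential with the analytic identity $ip=\id-(d_A h+h\,d_A)$ while keeping every Koszul sign under control, so that the two prescribed sign factors emerge and no spurious terms survive; the conceptual statement that both families are components of one morphism of quasi-cofree objects is clean, but its verification is where all the work lies.
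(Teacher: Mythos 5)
Your proposal is correct in outline, but it follows a genuinely different route from the paper. You re-prove the transfer theorem from scratch: encode the $L_\infty$-structure and the $\infty$-morphism as maps out of cofree objects compatible with the codifferentials, and verify that compatibility by expanding along the internal edges of the trees using the retract relation. To be precise about the bookkeeping there: one expands $\partial(h)=\id_A-ip$ on each edge, the $ip$-pieces reassemble into the quadratic compositions $\ell_p\circ_1\ell_q$ (resp.\ the $i_k\circ\ell_j$ and $[\,,\,]\circ(i_p\otimes i_q)$ terms of the $\infty$-morphism relation), while the $\id_A$-pieces, i.e.\ the collapsed edges, cancel by the Jacobi identity; your sketch slightly conflates these two roles, but the ingredients you name are the right ones and this direct verification is classical and does succeed. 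The paper instead invokes the general Homotopy Transfer Theorem of \cite{LodayVallette12} (Theorem~10.3.3) as a black box, so that its entire proof is the sign analysis: the maps $\psi$ and $s\varphi$ occurring in the Loday--Vallette proof contribute no signs because the bracket has degree $0$, hence the only signs come from the decomposition map of the Koszul dual cooperad $\mathrm{Lie}^{\ac}=\End^c_{\KK s^{-1}}\otimes_{H}\Com^*$, where the $\Com^*$ factor gives the sum over binary trees represented by shuffle trees, and the suspension cooperad $\End^c_{\KK s^{-1}}$ gives exactly the coefficient $\sgn(\sigma^t)\sgn(\sigma_{\bar t})$. This difference matters because the explicit sign is the real content of the statement beyond the standard unsigned formula, and it is exactly the step you defer as ``where all the work lies'': the paper's approach buys a structural, essentially one-line identification of the sign as a structure constant of a cooperad, avoiding the induction with Koszul-sign bookkeeping you would have to carry out; your approach buys self-containedness, requiring neither the general transfer theorem nor the Koszul-duality description of $\mathrm{Lie}^{\ac}$.
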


\begin{proof} 
Let us make explicit the signs in the proof of  \cite[Theorem~$10.3.3$]{LodayVallette12}. First one easily checks that the map $\psi$ yields no sign since one starts from a degree $0$ Lie bracket. (The signs coming from the permutations of the suspensions $s$ and the homotopy $h$ cancel.) The map $s\varphi$ has degree $2$ and so produces no sign. Therefore, the only sign is the one coming from the  decomposition map of the  cooperad $\mathop{\mathrm{Lie}}^{\ac}=\End^c_{\KK s^{-1}}\otimes_{H}\Com^*$. The  decomposition map  of the cooperad $\Com^*$ is given by the sum of all the binary trees, that we choose to represent with shuffle trees. And the decomposition map of the cooperad $\End^c_{\KK s^{-1}}$ is given by the sum of all the shuffle binary trees $t$  with coefficient exactly $\sgn(\sigma^t)\sgn(\sigma_{\bar t})$.
\end{proof}

\subsection{Convolution  algebras}\label{sec:ConvAlg}

Let $\C$ be a dg cooperad and let $\P$ be a dg operad.
Recall that the collection $\Hom(\C, \P):=\{ \Hom(\C(n), \P(n))\}_{n\in \NN}$ forms an operad called the \emph{convolution operad}, see \cite{LodayVallette12}. This structure induces a dg pre-Lie algebra structure and hence a dg Lie algebra structure on equivariant maps 
$$\Hom_\Sy(\C, \P):=\big(\prod_{n\in \NN} \Hom_{\Sy_n}({\C}(n), \P(n)), \partial, [\;,\,] \big)\ . $$
Explicitly, the Lie bracket is given by 
$$[f,g]:=   {\gamma}_{\P} \circ  \big(f\otimes g - (-1)^{|f||g|}g\otimes f \big) \circ    \Delta_{(1)}\ ,$$
where $\Delta_{(1)} : \C \to \TTT(\C)^{(2)}$ is the partial decomposition map of the cooperad $\C$.
In this \emph{convolution dg Lie algebra}, we consider the Maurer--Cartan equation 
$$\partial (\alpha) + \textstyle{\frac{1}{2}}[\alpha, \alpha]=0\ ,$$ whose degree $-1$ solutions are called \emph{twisting morphisms} and  denoted $\Tw(\C, \P)$.

All the  dg Lie algebras of this paper are of this form, where the cooperad is the Koszul dual dg cooperad $\P^{\ac}$ of an operad $\P$ and where the operad is the endomorphism operad $\End_A$: 
$$\g_\P:=\big(\prod_{n\in \NN}\Hom_{\Sy}(\P^{\ac}(n), \End_A(n)), \partial:=(\partial_A)_* - (d_{\P^{\ac}})^*,  [\;,\,]     \big) \  .$$

\begin{theorem}[``Rosetta Stone'', see \cite{LodayVallette12}]\label{thm:4 def theo}
The set of homotopy $\P$-algebra structures on a dg module $A$ is equal to
\begin{align*}
\Hom_{\mathsf{dgOp}}\left(\Omega \P^{\ac}, \End_A\right) \cong
\Tw(\P^{\ac}, \End_A)& \cong \mathrm{Codiff}(\P^{\ac}(A))\ .
\end{align*}
\end{theorem}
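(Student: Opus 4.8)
The plan is to establish the two isomorphisms in the ``Rosetta Stone'' theorem separately, since each encodes a different equivalent description of a homotopy $\P$-algebra structure. Throughout, recall that the cobar construction $\Omega \P^{\ac}$ is the quasi-free operad $\big(\TTT(s^{-1}\overline{\P^{\ac}}), d\big)$ generated by the desuspended Koszul dual cooperad, with differential encoding the decomposition map of $\P^{\ac}$. The first isomorphism $\Hom_{\mathsf{dgOp}}(\Omega \P^{\ac}, \End_A) \cong \Tw(\P^{\ac}, \End_A)$ is the operadic avatar of the classical bar--cobar adjunction, and the second isomorphism $\Tw(\P^{\ac}, \End_A) \cong \mathrm{Codiff}(\P^{\ac}(A))$ is the linear-dual statement relating twisting morphisms to square-zero coderivations of the cofree coalgebra.

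For the first isomorphism, I would first invoke the universal property of the free operad: a morphism of graded operads $\Omega \P^{\ac} = \TTT(s^{-1}\overline{\P^{\ac}}) \to \End_A$ is uniquely determined by a degree $0$ map of $\Sy$-modules $s^{-1}\overline{\P^{\ac}} \to \End_A$, equivalently by a degree $-1$ map $\overline{\P^{\ac}} \to \End_A$, that is, an element $\alpha$ of the convolution Lie algebra $\g_\P$ of degree $-1$. Next I would check that such a morphism is compatible with the differentials. Expanding the condition that the generators' images commute with the internal cobar differential (which is built from $\Delta_{(1)}$) against the endomorphism differential $\partial_A$ yields exactly the Maurer--Cartan equation $\partial(\alpha) + \tfrac12[\alpha,\alpha] = 0$ in $\g_\P$, using that the Lie bracket in the convolution algebra is defined via $\gamma_{\End_A}$ and $\Delta_{(1)}$. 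This identifies dg-operad morphisms with twisting morphisms $\Tw(\P^{\ac},\End_A)$.

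For the second isomorphism, I would pass from the cooperad side to the cofree conilpotent coalgebra $\P^{\ac}(A) = \bigoplus_n \big(\P^{\ac}(n)\otimes A^{\otimes n}\big)_{\Sy_n}$. By the universal property of the cofree coalgebra, a coderivation is determined by its corestriction to the generators, i.e.\ by a map $\P^{\ac}(A) \to A$, and by conilpotency and equivariance this corresponds precisely to an $\Sy$-equivariant family $\P^{\ac}(n) \to \End_A(n)$, that is, an element of $\g_\P$. I would then verify that the coderivation squares to zero if and only if the corresponding convolution element satisfies the Maurer--Cartan equation: the quadratic term $[\alpha,\alpha]$ matches the composite of the cofree coalgebra's decomposition with two copies of the generating map, while the linear term $\partial(\alpha)$ accounts for the internal differentials of $\P^{\ac}$ and $A$. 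This gives the bijection $\Tw(\P^{\ac},\End_A) \cong \mathrm{Codiff}(\P^{\ac}(A))$.

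The main obstacle I expect is bookkeeping the signs and (de)suspensions consistently across the three descriptions, so that the quadratic part of $d^2 = 0$ (on both the operadic and coalgebraic sides) matches the bracket $\tfrac12[\alpha,\alpha]$ with the correct sign and the linear part matches $\partial(\alpha)$. In particular, care is needed with the degree shift $s^{-1}$ on generators of $\Omega\P^{\ac}$ and with the Koszul sign rule when transporting the decomposition map $\Delta_{(1)}$ through the endomorphism operad; the proof of Theorem~\ref{Thm:TransferThm} already illustrates how delicate these suspension signs can be. The structural content of each bijection is, however, a direct unravelling of universal properties, so once the sign conventions are fixed the argument is essentially formal.
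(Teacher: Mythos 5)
Your proposal is correct, but note that the paper does not prove this statement at all: it is recalled verbatim from \cite{LodayVallette12} (the ``Rosetta Stone'', Theorem 10.1.13 there), and your argument --- the universal property of the quasi-free operad $\Omega\P^{\ac}$ identifying dg-operad morphisms with Maurer--Cartan elements of the convolution algebra, plus the cogenerator universal property identifying coderivations of the cofree conilpotent coalgebra $\P^{\ac}(A)$ with elements of $\g_\P$ and codifferentials with twisting morphisms --- is precisely the standard proof given in that reference. So the proposal is sound and takes essentially the same route as the cited source.
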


There is a generalisation of cooperads, where the decomposition coproduct is relaxed up to homotopy; this algebraic structure is called a \emph{homotopy cooperad}. 
It is made up of a dg $\Sy$-module $\C$ equipped with a decomposition map
$\Delta_{\C}\colon\C\to\TTT(\C)^{(\ge 2)}$, which satisfies some relation, see \cite{MerkulovVallette09I}. This notion is dual to the notion of homotopy operad of \cite{VanderLaan02}.

When $(\C, \Delta_\C)$ is a homotopy cooperad and $(\P, \gamma_\P)$ is a dg operad with the induced composition map
$\widetilde{\gamma}_{\P}\colon \allowbreak\TTT(\P)^{(\ge 2)}\to\P$,  the collection $\Hom(\mathcal{C},\mathcal{P})$ is a homotopy operad, called the \emph{convolution homotopy operad}. The direct product of $\Sy$-invariants of this collection 
$$\Hom_\Sy(\C, \P):=\big(\prod_{n\in \NN} \Hom_{\Sy_n}({\C}(n), \P(n)), \partial, \{\ell_n\}_{n\ge 2} \big)$$
is an $L_\infty$-algebra~\cite{VanderLaan02}. This algebra is referred to as the \emph{convolution $L_\infty$-algebra}; its structure maps $\ell_n$ are given by the formula
\begin{equation}\label{l-infty}
\ell_n(f_1,\ldots,f_n)=\sum_{\sigma\in S_n}(-1)^{\sgn(\sigma,f_1,\ldots,f_n)}\widetilde{\gamma}_{\P}\circ(f_{\sigma(1)}\otimes\cdots\otimes f_{\sigma(n)})\circ\Delta^{(n)}_{\C},
\end{equation}
where $\Delta^{(n)}_{\C}$ is the component of ${\Delta}_{\C}$ which maps $\C$ to $\TTT(\C)^{(n)}$, see~\cite{MerkulovVallette09I,VanderLaan02}. 

In such an algebra, we can consider the (generalised) Maurer--Cartan equation 
$$\sum_{n \ge 1} \frac{1}{n!} \ell_n(\alpha, \ldots, \alpha) =0\ ,$$
whose degree $-1$ solutions are called (generalised) \emph{twisting morphisms} and  denoted by $\Tw(\C, \P)$.
Notice that this equation, as well as other formulae throughout this paper, makes sense for homotopy convolution algebras, since for every element $c\in \C$, its image under the decomposition map $\Delta_\C$ is a finite sum. 

The data of a homotopy cooperad $\C$ is equivalent to the data of a quasi-free dg operad structure $\Omega_\infty \C$ on $\mathcal T (s^{-1}\C)$. Generalised twisting morphisms  are then in one-to-one correspondence with morphisms of dg operads from $\Omega_\infty \C$ to $\P$: 
$$\Hom_{\mathsf{dgOp}}\left(\Omega_\infty  \C, \P\right)\cong \Tw(\C, \P) \ .$$

\subsection{Gauge symmetries in homotopy Lie  algebras}\label{subsec:GaugeRecollections}

Let $\big(\mathfrak{l}, \{\ell_n\}_{n\ge 1} \big)$ be an $L_\infty$-algebra, and let $\alpha$ be a (generalised) Maurer--Cartan element of that algebra. One can twist  the original structure maps of~$\mathfrak{l}$ with $\alpha$: $$\ell^\alpha_n(x_1,\ldots,x_n):=\sum_{p\ge 0} \frac{1}{p!} \ell_{n+p}(\underbrace{\alpha, \ldots, \alpha}_p, x_1,\ldots, x_n)\ , $$ 
so that $\mathfrak{l}^\alpha:=\big(\mathfrak{l}, \{\ell^\alpha_n\}_{n\ge 1} \big)$ forms again an $L_\infty$-algebra, called a \emph{twisted $L_\infty$-algebra}. 
Recall that a degree $-1$ element $\tau\in\mathfrak{l}$ is an infinitesimal deformation of $\alpha$, i.e. $$\alpha+\varepsilon \tau \in \MC\big(\mathfrak{l}\otimes \KK[\varepsilon]/(\varepsilon^2)\big)\ , $$ if and only if $\ell^\alpha_1(\tau)=0$. So the tangent space of the Maurer--Cartan variety at the point $\alpha$ is equal to 
$$T_\alpha\,  \MC(\mathfrak{l}) = \Ker\,  \ell^\alpha_1\ . $$ 
In particular, if $\lambda$ is an element of degree~$0$ in $\mathfrak{l}$, the element $\tau_\lambda=\ell^\alpha_1(\lambda)$  satisfies the equation $\ell^\alpha_1(\tau_\lambda)=0$; so such an element defines an infinitesimal deformation of $\alpha$. The element $\tau_\lambda$ depends on $\alpha$, and so defines a vector field; we just checked that this vector field is a tangent vector field of the Maurer--Cartan variety. Its integral curves give deformations of Maurer--Cartan elements, and define \emph{gauge symmetries} of Maurer--Cartan elements of a $L_\infty$-algebra, see \cite{Getzler09}.

\section{The homotopy Lie algebra encoding skeletal homotopy BV-algebras}\label{sec:HoLie-BV}

In this section, we recall the notion of Batalin--Vilkovisky (BV) algebras and its  homotopy version. 
We develop the deformation theory of homotopy BV-algebras with a particular convolution $L_\infty$-algebra inside which we shall be able to describe the infinitesimal Givental  action in the next section. 

\subsection{Homotopy BV-algebras and skeletal homotopy BV-algebras}\label{subsec:HoBV}

This section is a brief summary of constructions and results of~\cite{GCTV12} and \cite{DrummondColeVallette11} that we use.

\begin{definition}[dg $\BV$-algebra]
A \emph{dg $\BV$-algebra} $(A, d,  \bullet, \Delta)$ is a differential graded commutative algebra equipped with a square-zero degree $1$ operator $\Delta$ of order at most~$2$.
\end{definition}

Notice that any $\BV$-algebra includes a degree $1$ Lie bracket $\langle\,\textrm{-} , \textrm{-}\,\rangle$ defined by 
$$\langle\,\textrm{-} , \textrm{-}\,\rangle\ =\ \Delta (\textrm{-} \bullet \textrm{-})\ -\
(\Delta(\textrm{-}) \bullet \textrm{-})   \ - \ (\textrm{-} \bullet
\Delta(\textrm{-})) \ .$$ 
This induces a quadratic-linear presentation $\TTT(\bullet, \Delta, \langle\,\textrm{-} , \textrm{-}\,\rangle)/(R)$  for the operad $\BV$ encoding $\BV$-algebras, see \cite[Section~$1$]{GCTV12} for a complete exposition.
Its  Koszul dual dg cooperad was proved to be equal to 
$$\BV^{\ac}\cong (G^{\ac}[\delta], d_\varphi)\ , $$ 
where $G^{\ac}$ stands for the Koszul dual cooperad of the operad $G$ encoding Gerstenhaber algebras, where $\delta:=s \Delta$ is a degree $2$ element of arity $1$ and where $d_\varphi$ is the unique coderivation extending 
$$ \vcenter{\xymatrix@M=2pt@R=8pt@C=8pt{
 \ar@{-}[dr] & &\ar@{-}[dl]   \\
 &  {\scriptstyle s\bullet }     \ar@{-}[d] &  \\
& {\scriptstyle s\Delta} & }} -
\vcenter{\xymatrix@M=2pt@R=8pt@C=8pt{
  \ar@{-}[dr]{ \scriptstyle s\Delta} & &\ar@{-}[dl]   \\
 &  {\scriptstyle s\bullet }     \ar@{-}[d] &  \\
&  & }} -
\vcenter{\xymatrix@M=2pt@R=8pt@C=8pt{
  \ar@{-}[dr]& & {\scriptstyle s\Delta} \ar@{-}[dl]   \\
 &  {\scriptstyle s\bullet }     \ar@{-}[d] &  \\
&  & }}  \quad\mapsto\quad
\vcenter{\xymatrix@M=2pt@R=8pt@C=8pt{
 \ar@{-}[dr] & &\ar@{-}[dl]   \\
 &  {\scriptstyle s\la\; , \, \ra}     \ar@{-}[d] &  \\
& & . }} $$

\begin{theorem}\cite[Theorem~$6$]{GCTV12}
The operad $\BV$ is a nonhomogeneous Koszul operad, i.e. the cobar construction of $\BV^{\ac}$ is a resolution of $\BV$: 
$$\BV_\infty:=\Omega \BV^{\ac} \ \qi \ \BV\ . $$
\end{theorem}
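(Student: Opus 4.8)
The plan is to invoke the Koszul duality theory for \emph{quadratic-linear} operads of \cite{GCTV12}, also exposed in \cite[Chapter~$7$]{LodayVallette12}. Write $V$ for the $\Sy$-module of generators $\{\bullet, \Delta, \la\,,\ra\}$ and $R \subseteq V \oplus \TTT(V)^{(2)}$ for the space of quadratic-linear relations of the presentation $\BV = \TTT(V)/(R)$. The theory requires two conditions. The minimality condition (ql1), $R \cap V = \{0\}$, is immediate: each relation — associativity and commutativity of $\bullet$, the squaring $\Delta^2 = 0$, the identity $\la\,,\ra = \Delta\bullet - (\Delta\,\bullet) - (\bullet\,\Delta)$, and the relations governing the interaction of $\Delta$ with $\la\,,\ra$ — has a nonzero quadratic leading term, so no generator is annihilated. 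The first real task is the maximality condition (ql2), namely that $\{R \circ_{(1)} V + V \circ_{(1)} R\} \cap \TTT(V)^{(2)} \subseteq R \cap \TTT(V)^{(2)} =: qR$; concretely, that the weight-three consequences of the relations create no weight-two relation outside $qR$. This is a finite check, and it is exactly what will force the coderivation $d_\varphi$ below to square to zero.

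Granting (ql1) and (ql2), the quadratic-linear Koszul duality theorem reduces the statement to the Koszulness of the quadratic analogue $q\BV := \TTT(V)/(qR)$, and identifies the inhomogeneous Koszul dual as $\BV^{\ac} \cong (q\BV^{\ac}, d_\varphi)$, where $d_\varphi$ is the unique coderivation extending the linear-part map $\varphi \colon qR \to V$. I would next identify $q\BV$ explicitly. Passing to quadratic leading terms, the defining identity for the bracket becomes the statement that $\Delta$ is a derivation of $\bullet$, while the remaining quadratic relations are the Gerstenhaber relations among $(\bullet, \la\,,\ra)$ together with $\Delta^2 = 0$ and the fact that $\Delta$ is a derivation of $\la\,,\ra$. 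Thus $q\BV$ is built from the Gerstenhaber operad $G$ and the dual-numbers operad $\mathcal{D} = \KK[\Delta]/(\Delta^2)$, concentrated in arity $1$, by a distributive law whose rewriting rule pushes $\Delta$ through each Gerstenhaber operation as a derivation.

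The technical heart is to prove that $q\BV$ is Koszul. I would use the distributive-law criterion: $G$ is Koszul (a classical fact, see \cite{LodayVallette12}) and $\mathcal{D}$ is Koszul, its Koszul dual being the arity-$1$ polynomial cooperad $\KK[\delta]$; it then suffices to check that the rewriting rule ``$\Delta$ past a Gerstenhaber operation'' is confluent, so that the canonical map of $\Sy$-modules $G \circ \mathcal{D} \to q\BV$ is an isomorphism. Confluence reduces to the weight-three critical monomials, where $\Delta$ is applied to a composite of two binary operations; resolving these in the two possible orders must agree. With confluence in hand, the theorem on operads defined by a distributive law yields simultaneously that $q\BV$ is Koszul and that $q\BV^{\ac} \cong G^{\ac}[\delta]$. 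I expect this to be the main obstacle, since it is the only step requiring a global argument rather than a bounded weight-by-weight verification; as an alternative I would be prepared to prove Koszulness directly by exhibiting a quadratic Gröbner (shuffle) basis for $q\BV$.

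Finally, I would assemble the pieces. Condition (ql2) guarantees that $\varphi$ extends to a well-defined square-zero coderivation $d_\varphi$ of $G^{\ac}[\delta]$, which is precisely the coderivation displayed in the statement. The quadratic-linear Koszul duality theorem then gives $\BV^{\ac} \cong (G^{\ac}[\delta], d_\varphi)$ and, since $q\BV$ is Koszul, certifies that the cobar construction is a resolution, $\BV_\infty = \Omega \BV^{\ac} \qi \BV$. This is exactly the assertion that $\BV$ is a nonhomogeneous Koszul operad.
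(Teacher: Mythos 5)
Your proposal is correct and takes essentially the same route as the proof of this theorem in the cited source \cite[Theorem~6]{GCTV12} (the present paper only imports the result): there, too, one verifies the quadratic-linear conditions for inhomogeneous Koszul duality, identifies the quadratic analogue $q\BV$ as a distributive-law composite of the Gerstenhaber operad $G$ with the dual numbers $\KK[\Delta]/(\Delta^2)$, checks the distributive law on the critical weight-three part, and concludes both that $q\BV$ is Koszul with $q\BV^{\ac}\cong G^{\ac}[\delta]$ and that $\BV^{\ac}\cong (G^{\ac}[\delta], d_\varphi)$, whence $\Omega\, \BV^{\ac}\qi \BV$.
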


Algebras over the Koszul resolution $\BV_\infty$ are called \emph{homotopy BV-algebras}. This resolution is already much smaller than the bar-cobar resolution but is not minimal. Let us explain, following~\cite{DrummondColeVallette11}, how to derive the minimal resolution from it.

We consider the $\Sy$-module  $M$ made up of the two elements $\mu$ and $\beta$, both of arity two with trivial symmetric group action, in degrees $1$ and $2$ respectively: 
$$ M:=\KK_2\underbrace{ s\bullet}_\mu\oplus \KK_2\underbrace{s\la \; , \, \ra}_{\beta}  \ .$$
Let $\psi$ denote the degree one morphism of graded $\Sy$-modules $\psi:\TTT^c(M)\to M$ which first projects $\TTT^c(M)$ to the cogenerators $M$ and then takes $\mu$ to $\beta$ and $\beta$ to zero. The map  $\psi$ extends uniquely to a degree one coderivation $d_\psi$ of $\TTT^c(M)$, which amounts to applying $\psi$ everywhere. So its image is equal to the sum over the vertices labelled $\mu$ of trees where this $\mu$ is changed for a $\beta$. 

The Koszul dual cooperad $G^{\ac}$ is a sub-cooperad of the cofree cooperad $\TTT^c(M)$ and the coderivation $d_\varphi$ of $G^{\ac}[\delta]$ is equal to $\delta^{-1}d_\psi$.

Let $t$ be a binary tree, that is a tree where all the vertices have total valence $3$.  Any vertex $v$ has some number of leaves $m_v$ above one of its incoming edges, and another number $n_v$ above the other.  Let the \emph{weight} $\omega(v)$ be their product $m_vn_v$. The sum of the weights of all the vertices of a binary tree with $n$ leaves is equal to~$\binom{n}{2}$.

\begin{definition}[The map $H$]
Let $\bar{H}:M\to M$ be the degree $-1$ morphism of graded $\Sy$-modules given by sending $\beta$ to $\mu$ and $\mu$ to $0$.  
We define the map $H$  on a decorated tree with $n$ leaves in $\TTT^c(M)$ as a sum over the vertices.  For the vertex $v$, the contribution to the sum is $\frac{\omega(v)}{\binom{n}{2}}$ times the decorated tree obtained by applying $\bar{H}$ to $v$, including the Koszul sign.  
\end{definition}
So the map $H$ has a similar flavour to extending $\bar{H}$ as a coderivation, but also includes combinatorial factors.

\textsc{Example.}
The image of 
$$\vcenter{\xymatrix@R=8pt@C=8pt{
 & & & & & 3 \ar@{-}[dr]& &5 \ar@{-}[dl] \\
1\ar@{-}[dr]  & &4\ar@{-}[dl]  & &2\ar@{-}[dr] & &*++[o][F-]{\mu} \ar@{-}[dl] & \\
   & *++[o][F-]{\mu} \ar@{-}[drr]& & & & *++[o][F-]{\beta} \ar@{-}[dll]& & \\
    & & & *++[o][F-]{\beta} \ar@{-}[d]& & & & \\
     & & & & & & & }} $$ 
under the map $H$ is equal to 
$$\frac{3}{5}\vcenter{\xymatrix@R=8pt@C=8pt{
 & & & & & 3 \ar@{-}[dr]& &5 \ar@{-}[dl] \\
1\ar@{-}[dr]  & &4\ar@{-}[dl]  & &2\ar@{-}[dr] & &*++[o][F-]{\mu} \ar@{-}[dl] & \\
   & *++[o][F-]{\mu} \ar@{-}[drr]& & & & *++[o][F-]{\beta} \ar@{-}[dll]& & \\
    & & & *++[o][F-]{\mu} \ar@{-}[d]& & & & \\
     & & & & & & & }} 
     -\frac{1}{5}
     \vcenter{\xymatrix@R=8pt@C=8pt{
 & & & & & 3 \ar@{-}[dr]& &5 \ar@{-}[dl] \\
1\ar@{-}[dr]  & &4\ar@{-}[dl]  & &2\ar@{-}[dr] & &*++[o][F-]{\mu} \ar@{-}[dl] & \\
   & *++[o][F-]{\mu} \ar@{-}[drr]& & & & *++[o][F-]{\mu} \ar@{-}[dll]& & \\
    & & & *++[o][F-]{\beta} \ar@{-}[d]& & & & \\
     & & & & & & & }} \ .$$ 

\begin{proposition}[\cite{DrummondColeVallette11}]\label{prop:MainDefRetract}
The maps $H$ and $d_\psi$ defined on $\TTT^c(M)$ restrict to the sub-cooperad $G^{\ac}\subset \TTT^c(M)$, and give rise to the following deformation retract:
\begin{eqnarray*}
\xymatrix@C=30pt{     *{
\big({G}^{\ac}[\delta], d_\varphi= \delta^{-1} d_\psi \big)  \  \ } \ar@<9ex>@(dl,ul)[]^{\delta H}\ \ar@{->>}@<0.5ex>[r]^(0.45){pr} & *{\ \ \ 
\big(\, {T}^c(\delta)   \oplus \Im\, Hd_\psi, 0\big), \quad \ \  \ \quad }  \ar@{>->}@<0.5ex>[l] }
\end{eqnarray*}
where $pr$ is the sum of the projection onto ${T}^c(\delta)$ in non-negative $\delta$-degrees and the projection $Hd_\psi$ in $\delta$-degree~$0$. 
\end{proposition}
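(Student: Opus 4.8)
The plan is to reduce the statement to a single identity on the cofree cooperad $\TTT^c(M)$ and then to organise the bookkeeping in the variable $\delta$. First I would record that both $d_\psi$ and $H$ fix the number of leaves and act only on the $\mu/\beta$-decorations, so that on $G^\ac[\delta]=G^\ac\otimes\KK[\delta]$ the operators $d_\varphi=\delta^{-1}d_\psi$ and $h:=\delta H$ shift the $\delta$-degree by $-1$ and $+1$ respectively, while the $G^\ac$-factor is treated by $d_\psi$ and $H$. Since the two factors $\delta^{-1}$ and $\delta$ cancel whenever a $\delta$ is available, the homotopy identity $\id-ip=d_\varphi h+h d_\varphi$ becomes, in each $\delta$-degree $k\ge 1$ and on the part carrying at least one binary vertex, the single equation
$$d_\psi H+H d_\psi=\id\ .$$
In $\delta$-degree $0$, where $d_\varphi$ vanishes for degree reasons (it lowers the $\delta$-degree by one), the same computation gives $d_\varphi h+h d_\varphi=d_\psi H$, hence $ip=\id-d_\psi H=H d_\psi$; and on the arity-one towers of $\delta$'s, which carry no vertex, every operator vanishes and $ip=\id$. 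Granting the displayed identity, this is exactly the claimed splitting $\TTT^c(\delta)\oplus\Im\,H d_\psi$ with the prescribed projection $pr$.

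The heart of the proof is therefore the identity $d_\psi H+H d_\psi=\id$ on the subspace of $\TTT^c(M)$ spanned by trees with $n\ge 2$ leaves. I would expand the left-hand side as a double sum over an ordered pair of vertices: $d_\psi$ changes a $\mu$ into a $\beta$ at one vertex, $H$ changes a $\beta$ into a $\mu$ at another, the latter weighted by $\tfrac{\omega(v)}{\binom{n}{2}}$. The \emph{diagonal} contributions, where both operators act at the same vertex $v$ and hence return the original tree, occur exactly once for every vertex---through $H d_\psi$ when $v$ carries a $\mu$ and through $d_\psi H$ when $v$ carries a $\beta$---each with coefficient $\tfrac{\omega(v)}{\binom{n}{2}}$ and, after cancellation of the two equal Koszul signs, with a plus sign. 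Their sum is $\tfrac{1}{\binom{n}{2}}\sum_v\omega(v)=1$ by the weight identity recalled just before the definition of $H$, which yields exactly $\id$. For the \emph{off-diagonal} contributions, a fixed modified tree (with $\mu,\beta$ altered at two distinct vertices $u\ne v$) is produced once by $H d_\psi$ and once by $d_\psi H$, in both cases with the same weight $\tfrac{\omega(v)}{\binom{n}{2}}$, since the weight depends only on the shape of the tree and not on its decoration; a short Koszul-sign computation, using that $d_\psi$ and $H$ have degrees $+1$ and $-1$, shows these two occurrences carry opposite signs and cancel.

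It remains to check that $d_\psi$ and $H$ preserve the sub-cooperad $G^\ac\subset\TTT^c(M)$, and to verify the side conditions. That $d_\psi$ restricts is immediate from \cite{GCTV12}, since $d_\varphi=\delta^{-1}d_\psi$ is already known there to be a well-defined coderivation of $\BV^\ac=G^\ac[\delta]$. The restriction of $H$ is the genuinely delicate point, and I expect it to be the main obstacle: unlike $d_\psi$, the map $H$ is \emph{not} a coderivation---the combinatorial weights $\tfrac{\omega(v)}{\binom{n}{2}}$ break that structure---so its compatibility with the defining co-relations of $G^\ac$ must be established directly. I would do this by working with an explicit description of $G^\ac$ inside $\TTT^c(M)$ (the Koszul dual cooperad of the Gerstenhaber operad, self-dual up to operadic suspension) and checking that applying $\bar{H}$ with the prescribed weights to a generic element of $G^\ac(n)$ lands again in $G^\ac(n)$; here the precise normalisation $\sum_v\omega(v)=\binom{n}{2}$ is what makes the weighted combination close up.

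Finally, the side conditions $h^2=0$, $ph=0$, $hi=0$ of a deformation retract follow by the same diagonal/off-diagonal sign analysis. One has $H^2=0$ because $\bar{H}^2=0$ forces the surviving terms, acting at two distinct vertices, to pair up with opposite Koszul signs; hence $h^2=\delta^2H^2=0$. Next, $h$ raises the $\delta$-degree and preserves the set of vertices, so its image lies in positive $\delta$-degree with vertices present, which $pr$ annihilates, giving $ph=0$. Lastly $hi=0$ because $H$ vanishes on the vertex-free towers $\TTT^c(\delta)$ and, on $\Im\,Hd_\psi$, $hi$ factors through $H^2=0$.
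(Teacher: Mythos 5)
The paper itself does not prove this proposition: it is quoted verbatim from \cite{DrummondColeVallette11}, so the only thing to compare your proposal against is the proof in that reference. The correct parts of your argument do reproduce its mechanism: the reduction of the retract identity to the single equation $d_\psi H+Hd_\psi=\id$ on trees with at least one vertex (via the $\delta$-degree bookkeeping, with $ip=Hd_\psi$ in $\delta$-degree $0$ and $ip=\id$ on ${T}^c(\delta)$), the diagonal/off-diagonal analysis in which the diagonal terms sum to $\sum_v\omega(v)/\binom{n}{2}=1$ and the off-diagonal terms cancel in pairs by Koszul signs, the side conditions $H^2=0$, $pr\circ(\delta H)=0$, $(\delta H)\circ i=0$, and the observation that $d_\psi$ preserves $G^{\ac}$ because $d_\varphi=\delta^{-1}d_\psi$ is already known to be a coderivation of $\BV^{\ac}=G^{\ac}[\delta]$. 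All of this is sound.

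However, there is a genuine gap, and it is exactly the one you flag yourself and then do not close: the stability of $G^{\ac}$ under $H$ is asserted, not proven. This is not a side condition one may defer; it is half of the statement of the proposition, and without it the data $(\delta H, pr)$ does not even define maps on $G^{\ac}[\delta]$, so the retract identity you prove on $\TTT^c(M)$ says nothing about the complex in question. The point is nontrivial because $H$ is not a coderivation and $G^{\ac}$ is a proper sub-$\Sy$-module of $\TTT^c(M)$ cut out by the corelations of the Gerstenhaber cooperad: already in arity $3$ it is a $6$-dimensional subspace of the $12$-dimensional space of decorated binary trees, and one must check by hand (or by a structural argument, e.g. using the distributive-law description $G=\Com\circ\mathrm{Lie}$ and the resulting explicit basis of $G^{\ac}$) that the $\omega(v)$-weighted sum of single-vertex changes $\beta\mapsto\mu$ sends each corelation back into the span of corelations. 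This verification is precisely the technical content supplied in \cite{DrummondColeVallette11}, and your proposal replaces it with a promissory ``I would check''. Note also that your stated reason for optimism is off target: the normalisation $\sum_v\omega(v)=\binom{n}{2}$ is a global scalar, hence irrelevant to whether any subspace is preserved (it only ensures that the diagonal terms sum to exactly $\id$); what stability actually depends on is the \emph{relative} weights $\omega(v)$ attached to the different vertices, and that is what must be computed against the corelations.
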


The right-hand side computes the Quillen homology $H^Q(\BV)$ of the operad $\BV$, i.e. the homology of the bar construction of $\BV$. We shall denote it by $\mathcal{H}\oplus \I$ for brevity. It  can be expressed in terms of the cohomology of the moduli space of curves of genus~$0$: 
$$\mathcal{H} :=\overline{H}^Q(\BV)\cong \bar{T}^c(\delta)   \oplus \Im\, Hd_\psi \cong \bar{T}^c(\delta) \oplus H^{\bullet+1}(\mathcal{M}_{0,n+1})\ . $$
In \cite{DrummondColeVallette11}, the Homotopy Transfer Theorem for homotopy cooperads was used to transfer the dg  cooperad structure of ${\BV}^{\ac}$ to a homotopy cooperad structure on 
${\mathcal{H}}$ 
via the above deformation retract. 
In operadic terms, the short exact sequence of homotopy cooperads 
$$\bar{T}^c(\delta) \mono {\mathcal{H}} \epi  H^{\bullet+1}(\mathcal{M}_{0,n+1})$$ 
is  exact, i.e. ${\mathcal{H}}$ is an extension of the (non-unital) cooperads 
 $$\bar{T}^c(\delta)=H^\bullet(S^1)^{\ac}\quad\text{ and }\quad H^{\bullet+1}(\mathcal{M}_{0,n+1})=\allowbreak H_\bullet(\overline{\mathcal{M}}_{0,n+1})^{\ac}.$$

\begin{theorem}[\cite{DrummondColeVallette11}]\label{thm:sBVinfinity}
The cobar construction of the homotopy cooperad ${\mathcal{H}}$ is the minimal model of the operad~$\BV$:
$$\sBV_\infty:=\Omega_\infty\,  {\mathcal{H}} \qi \BV. $$
\end{theorem}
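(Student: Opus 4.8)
The plan is to deduce the statement from the Homotopy Transfer Theorem for homotopy cooperads together with the functoriality of the cobar construction $\Omega_\infty$, using the Koszul resolution $\BV_\infty = \Omega\BV^{\ac} \qi \BV$ recorded above.

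First I would unwind the transfer along the deformation retract of Proposition~\ref{prop:MainDefRetract}. In addition to endowing $\mathcal{H}$ with its homotopy cooperad structure, the transfer theorem supplies an $\infty$-quasi-isomorphism of homotopy cooperads relating $\mathcal{H}$ and $\BV^{\ac}$, whose first component is the quasi-isomorphism of the retract and whose higher components are the tree sums built from the structure map $d_\varphi = \delta^{-1} d_\psi$ and the homotopy $\delta H$. Here $\mathcal{H}$ is understood as the coaugmentation coideal, the summand $\I$ supplying the counit.

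Next I would apply $\Omega_\infty$. Since $\BV^{\ac}$ is a genuine dg cooperad, its only nonzero structure map is the binary partial decomposition, so $\Omega_\infty\BV^{\ac}$ agrees with the ordinary cobar construction $\Omega\BV^{\ac} = \BV_\infty$. The functor $\Omega_\infty$ turns $\infty$-morphisms of homotopy cooperads into strict morphisms of dg operads and carries $\infty$-quasi-isomorphisms between conilpotent homotopy cooperads to quasi-isomorphisms. Applied to the transfer $\infty$-quasi-isomorphism, it therefore produces a quasi-isomorphism between $\Omega_\infty\mathcal{H}$ and $\BV_\infty$; composing with $\BV_\infty \qi \BV$ identifies $\Omega_\infty\mathcal{H}$ as a model of $\BV$.

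It remains to verify minimality. Because $\mathcal{H}$ carries the zero internal differential, being a Quillen homology, the cobar differential on $\TTT(s^{-1}\mathcal{H})$ has no linear part and is governed solely by the decomposition map $\Delta_{\mathcal{H}}\colon \mathcal{H}\to\TTT(\mathcal{H})^{(\ge 2)}$; hence it sends the generators $s^{-1}\mathcal{H}$ into the decomposables $\TTT(s^{-1}\mathcal{H})^{(\ge 2)}$, which is exactly the minimality condition. I expect the main obstacle to be the middle step, namely confirming that $\Omega_\infty$ sends the transfer $\infty$-quasi-isomorphism to a genuine quasi-isomorphism of dg operads: this rests on the conilpotency of both $\mathcal{H}$ and $\BV^{\ac}$ and on compatibility with the weight grading (by powers of $\delta$ together with the tree-weight), which together feed a filtration or spectral-sequence comparison ensuring acyclicity of the mapping cone.
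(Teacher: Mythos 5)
Your proposal is correct and follows essentially the same route as the proof in \cite{DrummondColeVallette11}, which is what the paper (citing that reference) has in mind: transfer the dg cooperad structure of $\BV^{\ac}$ along the deformation retract of Proposition~\ref{prop:MainDefRetract} via the Homotopy Transfer Theorem for homotopy cooperads, apply $\Omega_\infty$ to the resulting $\infty$-quasi-isomorphism, compose with the Koszul resolution $\Omega\,\BV^{\ac}\qi\BV$, and deduce minimality from the vanishing of the internal differential on $\mathcal{H}$, which forces the cobar differential to be decomposable. The technical point you flag---that $\Omega_\infty$ sends this $\infty$-quasi-isomorphism to a genuine quasi-isomorphism of dg operads, justified by conilpotency and weight-grading filtration arguments---is precisely where the substance of the cited proof lies, so your assessment of the difficulty is accurate as well.
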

Algebras over the minimal model $\sBV_\infty$ are called \emph{skeletal homotopy BV-algebras}.

\subsection{The three convolution dg Lie algebras}
The general construction of convolution dg Lie algebras from Section~\ref{sec:ConvAlg} can be used to produce three dg Lie algebras that we shall use in this paper. 
Applying the general construction to the dg cooperad $\C={\BV}^{\ac}$, we obtain the convolution dg Lie algebra $\g_{\BV}$ that encodes  homotopy $\BV$-algebras structures. The formulae of Section~\ref{subsec:HoBV} show that $$\g_{\BV}\cong ( \g_{G}[[z]], z (d_\psi)^*),$$ where $z$ is a degree $-2$ element.
Applying the general construction to the (non-unital) cooperad $\C=\overline{T}^c(\delta)$, we obtain the convolution dg Lie algebra
$$\g_\Delta :=\big(\Hom(\bar{T}^c(\delta), \End(A)), (\partial_A)_* \big)\cong
\big(z\End(A)[[z]], \partial_A\big)$$  
that encodes multicomplex structures, see \cite{DotsenkoShadrinVallette12}. Notice that this dg Lie algebra is equal to Givental dg Lie algebra of Section~\ref{subsec:GiventalAction}.
Finally, applying the general construction to the  cooperad $\C=\HyperCom^{\ac}$, we obtain the convolution dg Lie algebra
$$\g_{\HyperCom} :=\big(\Hom_\Sy(H^{\bullet+1}(\mathcal{M}_{0,n+1}), \End_A), (\partial_A)_* \big)\cong
\big(\Hom_\Sy(\Im\, Hd_\psi, \End_A), (\partial_A)_* \big)\ ,$$ 
which encodes homotopy hypercommutative algebra structures.

\subsection{The convolution homotopy Lie algebra of skeletal homotopy BV-algebras}\label{subsec:Linfty}
Applying the general construction of convolution homotopy Lie algebras to the homotopy cooperad $\C={\mathcal{H}}$, we obtain the convolution $L_\infty$-algebra 
 $$
\mathfrak{l}_{\BV}:=\big(\prod_{n\ge 1} \Hom_{\Sy_n}{\mathcal{H}}(n), \End_A(n)), (\partial_A)_*, \{\ell_n\}_{n\ge 2}\big).
 $$ 

\begin{proposition}[\cite{DrummondColeVallette11}]\label{prop:ReducedHBV-TW}
The set of skeletal homotopy $\BV$-algebra structures on a dg module $A$ is equal to
$$ \Hom_{\mathsf{dgOp}}(\sBV_\infty, \End_A)\cong  \Tw({\mathcal{H}}, \End_A) \ .$$
\end{proposition}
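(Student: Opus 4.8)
The plan is to recognise this proposition as a special case of the general ``Rosetta Stone'' correspondence for convolution $L_\infty$-algebras, adapted to the homotopy-cooperad setting. Recall from Section~\ref{sec:ConvAlg} that for a homotopy cooperad $\C$, the data of a generalised twisting morphism in $\Tw(\C,\P)$ — that is, a degree $-1$ element $\alpha$ satisfying the generalised Maurer--Cartan equation $\sum_{n\ge 1}\frac{1}{n!}\ell_n(\alpha,\ldots,\alpha)=0$ in the convolution $L_\infty$-algebra $\Hom_\Sy(\C,\P)$ — is in natural bijection with morphisms of dg operads $\Omega_\infty\C\to\P$. This equivalence was stated in the excerpt as
$$\Hom_{\mathsf{dgOp}}(\Omega_\infty\C, \P)\cong \Tw(\C,\P)\ .$$
So the first step is simply to specialise this to $\C=\cH$ and $\P=\End_A$.

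Second, I would invoke Theorem~\ref{thm:sBVinfinity}, which identifies the cobar-type construction on the homotopy cooperad $\cH$ with the minimal model: $\sBV_\infty=\Omega_\infty\,\cH$. By definition, a skeletal homotopy $\BV$-algebra structure on $A$ is an algebra over $\sBV_\infty$, i.e. a morphism of dg operads $\sBV_\infty\to\End_A$. Combining this with the identification $\sBV_\infty=\Omega_\infty\,\cH$ gives
$$\Hom_{\mathsf{dgOp}}(\sBV_\infty,\End_A)=\Hom_{\mathsf{dgOp}}(\Omega_\infty\,\cH,\End_A)\cong \Tw(\cH,\End_A)\ ,$$
which is exactly the claimed equality. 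The whole proof is therefore an assembly of two quoted results, and most of the content is bookkeeping to check that the convolution $L_\infty$-algebra $\mathfrak{l}_{\BV}$ built in Section~\ref{subsec:Linfty} from $\cH$ is precisely the one whose Maurer--Cartan elements appear in the general correspondence.

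The one point that genuinely requires care — and which I expect to be the main (mild) obstacle — is the convergence/finiteness issue already flagged in the excerpt: because $\cH$ is only a homotopy cooperad, the generalised Maurer--Cartan equation involves all the higher brackets $\ell_n$, and one must check that for every element $c\in\cH$ the image $\Delta_\cH(c)$ is a finite sum so that $\sum_{n\ge 1}\frac{1}{n!}\ell_n(\alpha,\ldots,\alpha)$ evaluated on $c$ is well defined. This is exactly the remark made after equation~\eqref{l-infty}, so the verification reduces to observing that $\cH$ inherits this property from the weight grading by number of vertices on $\TTT^c(M)$ (equivalently, from the fact that $\cH\cong \bar{T}^c(\delta)\oplus H^{\bullet+1}(\mathcal{M}_{0,n+1})$ decomposes under a decomposition map with finitely many nonzero components in each arity). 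Once finiteness is secured, the bijection between dg operad morphisms out of a quasi-free operad and twisting morphisms is formal, dual to the classical computation for cooperads, and I would not grind through it but simply cite the general statement from Section~\ref{sec:ConvAlg}.

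In summary, I would structure the proof as: (i) apply $\sBV_\infty=\Omega_\infty\,\cH$ from Theorem~\ref{thm:sBVinfinity}; (ii) apply the general equivalence $\Hom_{\mathsf{dgOp}}(\Omega_\infty\C,\P)\cong\Tw(\C,\P)$ with $\C=\cH$, $\P=\End_A$; (iii) note that the resulting twisting morphisms are precisely the generalised Maurer--Cartan elements of $\mathfrak{l}_{\BV}$, their equation being well defined thanks to the finiteness of $\Delta_\cH$. The argument is short because all the hard work — constructing $\cH$, its homotopy cooperad structure, and establishing that its cobar construction is the minimal model — was already carried out in the preceding subsections and in \cite{DrummondColeVallette11}.
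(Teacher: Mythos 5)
Your proposal is correct and takes essentially the same route as the paper: Proposition~\ref{prop:ReducedHBV-TW} is quoted from \cite{DrummondColeVallette11} and is justified exactly as you describe, by combining Theorem~\ref{thm:sBVinfinity} ($\sBV_\infty=\Omega_\infty\,\cH$) with the general correspondence $\Hom_{\mathsf{dgOp}}(\Omega_\infty\C,\P)\cong\Tw(\C,\P)$ for homotopy cooperads stated in Section~\ref{sec:ConvAlg}, the generalised Maurer--Cartan equation being well defined by the finiteness of the decomposition map, as you note.
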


 The combinatorics of the homotopy transfer theorem for cooperads
 makes the computations in the convolution $L_\infty$-algebra $\mathfrak{l}_{\BV}$  unfeasible. To make them manageable, we go the other way round, first considering the convolution Lie algebra $\g_{\BV}$ and then applying the homotopy transfer theorem for homotopy Lie algebras (Theorem~\ref{Thm:TransferThm}).

\begin{proposition}\label{prop:HTTCoopLinfty}
The above $L_\infty$-algebra structure $\mathfrak{l}_{\BV}$ on $\Hom_\Sy({\mathcal{H}}, \End_A)$ is isomorphic to the $L_\infty$-algebra structure obtained by transferring the dg Lie algebra structure of $\g_{\BV}$
under the formulae of Theorem~\ref{Thm:TransferThm} and the following deformation retract
\begin{eqnarray*}
\xymatrix@C=30pt{     
*{\big(\Hom_{\Sy}(\overline{\BV}^{\ac}, \End_A), \partial\big)  \  \ } 
\ar@<11ex>@(dl,ul)[]^{(\delta H)^*}\ \ar@{->>}@<0.5ex>[r] & 
*{\ \ \ \big(\Hom_\Sy({\mathcal{H}}, \End_A), (\partial_A)_* \big), \ }  \ar@{>->}@<0.5ex>[l]^{pr^*}
}
\end{eqnarray*}
\end{proposition}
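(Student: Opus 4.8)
The plan is to prove the isomorphism by writing out the two defining tree formulae for the structure maps and matching them term by term, the guiding principle being that the convolution construction $\Hom_\Sy(-,\End_A)$ turns a homotopy transfer of cooperads into a homotopy transfer of $L_\infty$-algebras. Indeed, the homotopy cooperad $\mathcal{H}$ is produced from the dg cooperad $\BV^{\ac}$ by the Homotopy Transfer Theorem for (homotopy) cooperads of~\cite{DrummondColeVallette11} applied to the deformation retract of Proposition~\ref{prop:MainDefRetract}. Applying the contravariant functor $\Hom_\Sy(-,\End_A)$ to that retract yields precisely the deformation retract displayed in the statement, with inclusion $pr^*$, projection $i^*$ and homotopy $(\delta H)^*$. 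The whole content of the proposition is therefore the commutation of ``take convolution'' with ``transfer''.

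First I would make the transferred decomposition map $\Delta_{\mathcal{H}}$ explicit. By the cooperad Homotopy Transfer Theorem, its weight-$n$ component $\Delta_{\mathcal{H}}^{(n)}\colon\mathcal{H}\to\TTT(\mathcal{H})^{(n)}$ is a sum over shuffle binary trees $t\in\mathsf{SBT}_n$: each $t$ prescribes a nested sequence of $n-1$ partial decompositions $\Delta_{(1)}$ of $\BV^{\ac}$, with the homotopy $\delta H$ inserted on the internal edges of $t$, the inclusion $i$ applied to the source and the projection $pr$ applied to each of the $n$ resulting pieces. (The arity-one generators $\delta$ of $\BV^{\ac}$ are absorbed into the differential and into $\delta H$, so that the higher brackets $\ell_n$, $n\ge 2$, are genuinely indexed by binary trees, matching $\mathsf{SBT}_n$ on both sides.) Substituting this into the convolution $L_\infty$-structure map~\eqref{l-infty} and composing the $n$ decorated pieces with $f_{\sigma(1)}\otimes\cdots\otimes f_{\sigma(n)}$ through $\widetilde{\gamma}_{\End_A}$, the contravariance of $\Hom(-,\End_A)$ converts, at each vertex of $t$, the pattern ``$\Delta_{(1)}$ on the source followed by $\widetilde{\gamma}_{\End_A}$ on the target'' into the convolution Lie bracket $[\,,\,]$ of $\g_{\BV}$; it converts each internal $\delta H$ into $(\delta H)^*$, each piecewise $pr$ into $pr^*$, and the source $i$ into $i^*$. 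The summation over $\sigma\in S_n$ in~\eqref{l-infty} reassembles the antisymmetrisation already present in the bracket. Term by term over $\mathsf{SBT}_n$, the result is exactly $\sum_{t\in\mathsf{SBT}_n} i^*\, t([\,,\,],(\delta H)^*)\,(pr^*)^{\otimes n}$, which is the bracket transferred from the dg Lie algebra $\g_{\BV}$ by Theorem~\ref{Thm:TransferThm}.

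The main obstacle is the comparison of signs. The convolution formula~\eqref{l-infty} carries the Koszul signs $(-1)^{\sgn(\sigma,f_1,\ldots,f_n)}$ produced by permuting the arguments $f_i$, whereas Theorem~\ref{Thm:TransferThm} carries the signs $\sgn(\sigma^t)\sgn(\sigma_{\bar t})$ coming from the decomposition of the cooperad $\mathop{\mathrm{Lie}}^{\ac}=\End^c_{\KK s^{-1}}\otimes_{H}\Com^*$. I would check that these coincide on each shuffle binary tree by invoking the sign analysis already carried out in the proof of Theorem~\ref{Thm:TransferThm}: it is shown there that $\sgn(\sigma^t)\sgn(\sigma_{\bar t})$ is exactly the operadic-suspension sign relating the symmetric (van der Laan) convention used in~\eqref{l-infty} to the shuffle-tree convention. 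In addition, one has to keep track of the degree shifts carried by $\delta=s\Delta$ (degree $2$) and by the formal variable $z$ of degree $-2$ in $\g_{\BV}\cong(\g_{G}[[z]],z(d_\psi)^*)$; these enter the two formulae symmetrically, through $\delta H$ on the cooperad side and $(\delta H)^*$ on the Lie side, so the suspension signs they generate cancel in the same way as in that proof.

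Once the signs are reconciled, the equality of the two tree expansions upgrades to an isomorphism of $L_\infty$-structures on $\Hom_\Sy(\mathcal{H},\End_A)$: the comparison isomorphism is the standard sign-rescaling on each weight component that mediates between the van der Laan and shuffle-tree conventions, and it becomes the identity once those conventions are fixed as in Theorem~\ref{Thm:TransferThm}. This is what the word ``isomorphic'' in the statement accounts for. I expect the genuine work, and the only place where a sign could go wrong, to be this last comparison; the tree-level matching itself is forced by the contravariant functoriality of $\Hom_\Sy(-,\End_A)$ and needs no further input beyond the explicit transfer formulae on the two sides.
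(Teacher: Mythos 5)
Your proposal is correct and follows essentially the same route as the paper: the paper also reduces the statement to the commutation of the convolution functor $\Hom_\Sy(-,\End_A)$ with homotopy transfer, writes both transferred structures as sums of iterated infinitesimal decompositions $\Delta_{(1)}$ with the homotopy $\delta H$ (resp. $(\delta H)^*$) inserted on internal edges, matches them term by term, and reconciles the signs by identifying $\sgn(\sigma_{\bar t})$ with the sign of the leveled binary tree encoding the order of the substitutions. The only cosmetic difference is that the paper states and proves this as a general result for an arbitrary coaugmented dg cooperad $\C$ and dg operad $\P$, whereas you argue directly for $\BV^{\ac}$ and $\End_A$.
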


\begin{proof} This proposition follows from the following general result: the formulae for the transferred homotopy cooperad  \cite[Theorem~$3.3$]{DrummondColeVallette11} and for the transferred homotopy Lie algebra (Theorem~\ref{Thm:TransferThm}) 
 commute under the convolution homotopy Lie algebra functor
$$\Hom_{\Sy}(-, \P)   \ : \ \text{\sf homotopy cooperads} \to \text{\sf homotopy Lie algebras}   \  , $$ 
given in Section~\ref{sec:ConvAlg}.
Let $(\C, \Delta, d_\C)$ be a coaugmented dg cooperad and let $(\P, \gamma, d_\P)$ be a dg operad. Writing the underlying homology groups ${\mathcal{H}}$ as a deformation retract of $(\overline{\C}, d_\C)$
\begin{eqnarray*}
&\xymatrix{     *{ \quad \ \  \quad (\overline{\C}, d_\C)\ } \ar@(dl,ul)[]^{\eta}\ \ar@<0.5ex>[r]^-{\pi} & *{\
({\mathcal{H}}, 0) \quad \ \  \ \quad }  \ar@<0.5ex>[l]^-{\iota}} &
\end{eqnarray*}
allows one to transfer a homotopy cooperad structure as follows.  For any tree $t\in \Tree$ with at least $2$ vertices, we consider all the possible ways of writing it by successive substitutions of trees with two vertices:
$$t=(((t_1\circ_{j_1} t_2)\circ_{j_2} t_3) \cdots )\circ_{j_k} t_{k+1}    \ ,$$
where $t\circ_j s$ stands for the substitution of the tree $s$ at the $j^\textrm{th}$ vertex of $t$. 
The transferred structure map $\widetilde{\Delta}_t :  {\mathcal{H}}\to t({\mathcal{H}})$, for $t\in \textsf{Tree}$, is then given  by
$$
\widetilde{\Delta}_t:=\sum \pm\,   t(\pi)\circ \big(
(\Delta_{t_{k+1}} \eta)     \circ_{j_k}   (\cdots   (\Delta_{t_3} \eta)     \circ_{j_2}  ((\Delta_{t_2} \eta)  \circ_{j_1} \Delta_{t_1} )   )
     \big)\circ \iota \ , $$
where the notation $(\Delta_{t'} \eta)  \circ_j \Delta_{t}$ means  the composite of $\Delta_t$ with $\Delta_{t'} \eta$ at the $j^\textrm{th}$ vertex of the tree $t$. The induced $L_\infty$-algebra structure on the convolution algebra
$\Hom_\Sy({\mathcal{H}}, \P)$ is then equals to
$$ \ell_n(f_1, \ldots, f_n)\  : \ {\mathcal{H}}\xrightarrow{\iota} \overline{\C}  \to \TTT(\overline{\C})^{(n)}
 \xrightarrow{\TTT(\pi)} \TTT({\cH})^{(n)}  \xrightarrow{} \TTT(\P)^{(n)}
  \xrightarrow{\gamma} \P \ , $$
where the map $\TTT({\cH})^{(n)}  \to \TTT(\P)^{(n)}$ is
$\sum_{\sigma \in \Sy_n}  \sgn(\sigma) \ \TTT(f_{\sigma(1)}, \ldots, f_{\sigma(n)})$ and
where the map from $\overline{\C}$ to $\TTT(\overline{\C})^{(n)}$
is
$$\sum_{t\in \Tree^{(n)}} \sum \pm\,    \big(
(\Delta_{t_{n-1}} \eta)     \circ_{j_{n-2}}   (\cdots   (\Delta_{t_3} \eta)     \circ_{j_2}  ((\Delta_{t_2} \eta)  \circ_{j_1} \Delta_{t_1} )   )
     \big).$$ This latter map is equal to the iteration of the infinitesimal decomposition map of the cooperad $\overline{\C}$
$$\sum_{j_k\in \{1, \ldots, k+1   \}  } \sgn(\sigma_j)\,    \big(
(\Delta_{(1)} \eta)     \circ_{j_{n-2}}   (\cdots   (\Delta_{(1)} \eta)     \circ_{j_2}  ((\Delta_{(1)} \eta)  \circ_{j_1} \Delta_{(1)} )   )
     \big)
         \ ,$$
where the sign $\sgn(\sigma_j)$ is given by the permutation associated to the following planar binary tree $j$ with levels:
any sequence of integers $(j_1, \ldots, j_{n-2})$ gives rise to a  with $n-1$ vertices such that the binary vertex at level $n-1-k$ is at place $j_k$. All the other signs are straightforward applications of the sign rule of the permutations of graded elements.

On the other hand, the transferred $L_\infty$-algebra structure on $\Hom_\Sy({\mathcal{H}}, \P)$ through the pulled-back deformation retract
\begin{eqnarray*}
&\xymatrix@C=40pt{     *{ \quad\quad\quad\ \ \quad \ \  \quad (\Hom_\Sy(\overline{\C}, \P), \partial)\ } \ar@(dl,ul)[]^{h:=\eta^*}\ \ar@<0.5ex>[r]^-{p:=\iota^*} & *{\
(\Hom_\Sy({\mathcal{H}}, \P), \partial) \quad \ \  \ \quad }  \ar@<0.5ex>[l]^-{i:=\pi^*}} &
\end{eqnarray*}
given by Theorem~\ref{Thm:TransferThm} is
$$l_n:=\sum_{t\in \mathsf{SBT}_n} \sgn(\sigma^t)\sgn(\sigma_{\bar t}) \  i \, t([ \; , \, ], h) \, (p)^{\otimes n}\ , $$
where $[ \; , \, ]$ is the  bracket of the convolution Lie algebra $\Hom_\Sy(\overline{\C}, \P)$ equal to
$$[f, g]\ : \ \overline{\C} \xrightarrow{\Delta_{(1)}} \TTT(\overline{\C})^{(2)}
 \xrightarrow{\TTT(f,g)-\TTT(g,f)} \TTT(P)^{(2)}
  \xrightarrow{\gamma} \P \ . $$
So the map  given by the labelled trees $t([ \; , \, ], h)$ amounts to splitting the elements of $\overline{\C}$ in all possible ways via iterations of $(\Delta_{(1)}h)$. In the end, the map $l_n(f_1, \ldots, f_n)$ is equal to
$$l_n(f_1, \ldots, f_n)\  : \ {\mathcal{H}}\xrightarrow{\iota} \overline{\C}  \to \TTT(\overline{\C})^{(n)}
  \xrightarrow{} \TTT(\P)^{(n)}
  \xrightarrow{\gamma} \P \ , $$
where the map $\TTT(\overline{\C})^{(n)}  \to \TTT(\P)^{(n)}$ is
$\sum_{\sigma \in \Sy_n}  \sgn(\sigma) \ \TTT(f_{\sigma(1)}\pi, \ldots, f_{\sigma(n)}\pi)$ and
where the map from $\overline{\C}$ to $\TTT(\overline{\C})^{(n)}$
is
$\sum_{j_k\in \{1, \ldots, k+1   \}  } \sgn(\sigma_j)\,    \big(
(\Delta_{(1)} \eta)     \circ_{j_{n-2}}   (\cdots   (\Delta_{(1)} \eta)     \circ_{j_2}  ((\Delta_{(1)} \eta)  \circ_{j_1} \Delta_{(1)} )   )
     \big)$. The sign $\sgn(\sigma_{\bar t})$ coincides with the sign $\sgn(\sigma_j)$, the other signs are direct consequences of permutation of graded elements.  Therefore, $\ell_n=l_n$.
\end{proof}

The $L_\infty$-algebra  $\mathfrak{l}_{\BV}$ whose underlying space satisfies 
$$\mathfrak{l}_{\BV}=\g_{\Delta} \oplus \g_{\HyperCom}$$ is an extension of the two dg Lie algebras $\g_{\Delta}$ and $\g_{\HyperCom}$ in the category of $L_\infty$-algebras.

\section{Gauge interpretation of infinitesimal Givental action} \label{sec:GaugeInterpretation}

In this section, we use the $L_\infty$-algebra $\mathfrak{l}_{\BV}$ to identify the Givental action with the homotopy BV gauge symmetries. 

\subsection{The main theorem}
Any Maurer--Cartan element $\alpha$ in $\g_{\HyperCom}$, representing a homotopy hypercommutative algebra, is also a Maurer-Cartan element in  $\mathfrak{l}_{\BV}$. We shall deform $\alpha$ in the direction of $\g_{\Delta}$ in the $L_\infty$-algebra $\mathfrak{l}_{\BV}$. Let $r(z)=\sum_{l\ge 1} r_l z^l$ be a degree $0$ element of $\g_{\Delta}$. The general definition of gauge symmetries implies that $\ell^\alpha_1(r(z))$ is an infinitesimal deformation of $\alpha$. Note that although in general gauge symmetries for $L_\infty$-algebras form an $\infty$-groupoid~\cite{Getzler09}, in our particular case $\g_{\Delta}$ is a dg Lie subalgebra, so the respective symmetries form a group. A hypercommutative algebra can also be viewed as a genus~$0$ CohFT, and as such can be deformed by the infinitesimal Givental action. In this section, we show that these two deformations are exactly the same, proving the following result.

\begin{theorem}\label{thm:MainGiv=LinftyAction}
For any hypercommutative algebra structure on $A$ encoded by a Maurer--Cartan element $\alpha\in\g_{\HyperCom}$ and for any degree $0$ element $r(z)\in\g_{\Delta}$, the Givental action of $r(z)$ on $\alpha$ is equal to the gauge symmetry action of $r(z)$ on $\alpha$ viewed as a homotopy $\BV$-algebra structure:
$$\widehat{r(z)}.\alpha=\ell^\alpha_1(r(z))\ . $$
\end{theorem}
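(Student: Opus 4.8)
The plan is to verify the identity $\widehat{r(z)}.\alpha = \ell^\alpha_1(r(z))$ by reducing both sides to explicit formulae on the convolution algebra and matching them term by term. The left-hand side is given geometrically by the Givental formula~\eqref{Givental-action-genus-0}, a sum of three kinds of contributions: the term where $r_k$ acts on the output (the $\psi_0^k$ term), the terms where $r_k$ acts at an input (the $\psi_m^k$ terms), and the correction terms supported on the boundary divisors (the $\tilde{\rho}_*$ term). The right-hand side is the first structure map of the $\alpha$-twisted $L_\infty$-algebra $\mathfrak{l}_{\BV}$, namely $\ell^\alpha_1(r(z)) = \sum_{p\ge 0}\frac{1}{p!}\,\ell_{1+p}(\alpha,\dots,\alpha,r(z))$. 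My strategy is to compute this sum using Proposition~\ref{prop:HTTCoopLinfty}, which lets me evaluate the transferred brackets $\ell_n$ concretely through the Homotopy Transfer Theorem applied to $\g_{\BV}\cong(\g_G[[z]], z(d_\psi)^*)$ rather than through the intractable direct definition on $\cH$.

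The key computational step is to unwind, for each fixed power $z^k$, the terms $\ell_{1+p}(\alpha,\dots,\alpha,r_k z^k)$. Because $r(z)\in\g_\Delta$ lives in the $\bar{T}^c(\delta)$-part and $\alpha$ lives in the $\HyperCom^{\ac}$-part, and because $\g_\Delta$ is a dg Lie \emph{subalgebra}, only finitely many values of $p$ contribute for each $k$: the $\delta$-degree bookkeeping (with $\delta=s\Delta$ of degree $2$ corresponding to the variable $z$ of degree $-2$) forces the number of $\alpha$-insertions to be controlled by $k$. First I would identify which shuffle binary trees $t\in\mathsf{SBT}_n$ in the transfer formula of Theorem~\ref{Thm:TransferThm} actually produce nonzero contributions when the leaves are decorated by one copy of $r_k z^k$ and $p$ copies of $\alpha$, tracking the homotopy $h=(\delta H)^*$ along internal edges. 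Then I would match the resulting three families of trees — $r_k$ grafted above the root, $r_k$ grafted at a single leaf, and $r_k$ grafted inside one of two factors joined at the root — against the three summands of~\eqref{Givental-action-genus-0}, using the divisor expressions~\eqref{TRR-0-root} and~\eqref{TRR-0-sym} for $\psi_0$ and $\psi_i+\psi_0$ to translate powers of $\psi$-classes into the combinatorial weights $\omega(v)/\binom{n}{2}$ appearing in the map $H$.

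The main obstacle I expect is the precise matching of signs and the appearance of the $\psi$-classes. The signs on the geometric side — the $(-1)^{k+1}$, the $(-1)^{i+1}$, and the alternating $i+j=k-1$ splitting — must be reconciled with the homotopy-algebra signs $\sgn(\sigma^t)\sgn(\sigma_{\bar t})$ and the Koszul signs accumulated by permuting the suspension $s$, the operator $\delta$, and the homotopy $H$. More subtly, the powers $\psi_0^k$, $\psi_m^k$, and the products $\psi_1^j\otimes\psi_0^i$ must emerge from iterating the homotopy $H$, whose combinatorial coefficients $\frac{\omega(v)}{\binom{n}{2}}$ are exactly what the divisor formulae turn into $\psi$-class powers; establishing this dictionary between iterated applications of $H d_\psi$ and monomials in the $\psi$-classes is where the real content lies. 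I would organise this by first treating the diagonal/output terms (smallest trees, no internal $h$), then the single-leaf terms, and finally the boundary $\tilde{\rho}_*$ terms, which correspond to trees with exactly one internal edge carrying the homotopy and hence to the two-factor splitting $I\sqcup J$; verifying that the binomial weights collapse to give precisely $\psi_1^j$ on one factor and $\psi_0^i$ on the other, with the stated sign, is likely to be the most delicate part of the argument.
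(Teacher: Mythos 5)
Your plan names the right ingredients (the reduction of $\ell^\alpha_1(r(z))$ to left-comb contributions via Proposition~\ref{prop:HTTCoopLinfty}, the divisor formulae \eqref{TRR-0-root}--\eqref{TRR-0-sym}, the weights $\omega(v)/\binom{n}{2}$ of the map $H$), but it is missing the two ideas that actually carry the proof, and the mechanism you propose in their place would not work. First, before any comparison can be made you need the fact that the gauge vector field $\ell_1^\alpha(r(z))$ is tangent to the space of \emph{strict} hypercommutative structures, i.e.\ vanishes outside $(\Im\, Hd_\psi)^{[1]}$ (Lemma~\ref{lem:THCom}, proved by tracking the weight grading by the number of $\mu$-labelled vertices down the left comb). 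Only this, combined with the factorisation property of CohFTs, lets one compare the two deformations through their values on fundamental cycles/generating operations; without it, your ``matching on the convolution algebra'' would have to be carried out on all of $H^{\bullet+1}(\mathcal{M}_{0,n+1})$, not just on generators, and no finite bookkeeping does that.

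Second, and more seriously, the dictionary you propose --- three families of trees matching the three summands of \eqref{Givental-action-genus-0} --- does not exist. When the $\psi$-classes are expanded, the three Givental summands get split (with weights $\binom{|I|}{2}/\binom{n}{2}$ and $1-\binom{|I|}{2}/\binom{n}{2}$) and recombined across one another: the $\psi_0^{k+1}$ term contributes entirely to one kind of composite, the $\psi_m$ terms contribute to both kinds, and the boundary terms are themselves split and averaged over subtle choices of pairs of leaves. The organizing idea you are missing is that one should not match terms at all, but prove that the Givental components $\lambda^{(k)}_n$ (values on fundamental cycles) and the gauge components $\theta^{(k)}_n$ (values on generators) satisfy one and the same recursion in the power $k$ of $z$,
$$\xi^{(k+1)}_{n}=\sum_{I\sqcup J=\underline{n}}\frac{\binom{|I|}{2}}{\binom{n}{2}}\,\xi^{(k)}_{|J|+1}\circ_1\nu_{|I|}-\Bigl(1-\frac{\binom{|I|}{2}}{\binom{n}{2}}\Bigr)\nu_{|J|+1}\circ_1\xi^{(k)}_{|I|}\ ,$$
with the same initial value $\xi^{(0)}_n=-r\circ_1\nu_n+\sum_{m=1}^n\nu_n\circ_m r$, and conclude by induction on $k$. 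The recursion on the Givental side is precisely the systematic, averaged application of \eqref{TRR-0-root}--\eqref{TRR-0-sym} that your ``dictionary'' defers; on the gauge side it comes from peeling one bracket and one homotopy $h$ off the left comb. Note also that the base case $k=0$ lies outside both formalisms as literally defined (since $r(z)$ starts at $z^1$), so both actions must first be extended --- the Givental action to $\End(A)[[z]]$, the gauge action to the augmented homotopy cooperad $\cH\oplus\I$ --- another step your plan does not anticipate. As written, the ``delicate part'' you postpone is the whole theorem.
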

\noindent
The rest of this section is devoted to the proof of this formula.

\subsection{Gauge symmetries restrict to hypercommutative algebras}\label{subsec:GaugetoHyperCom}

Let us first check that the formula of Theorem~\ref{thm:MainGiv=LinftyAction} makes sense by proving the following lemma. 

\begin{lemma}\label{lem:THCom}
The infinitesimal gauge symmetry action of $r(z)$ on $\alpha$ deforms it in the class of hypercommutative algebra structures, {i.e. 
$$\ell^\alpha_1(r(z)) \in T_\alpha\,  \MC(\g_{\HyperCom})\ .  $$ }
\end{lemma}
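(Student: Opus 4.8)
The plan is to reduce the statement to two facts, one combinatorial and one formal, by exploiting the arity decomposition of $\mathfrak{l}_{\BV}=\g_{\Delta}\oplus\g_{\HyperCom}$. The decisive structural observation is that the two summands are separated by arity: since $\bar{T}^c(\delta)$ is concentrated in arity $1$ (the generator $\delta$ is unary), while $H^{\bullet+1}(\mathcal{M}_{0,n+1})$ lives in arities $n\ge 2$, the splitting $\mathcal{H}=\bar{T}^c(\delta)\oplus\Im\,Hd_\psi$ identifies $\g_{\Delta}=\mathfrak{l}_{\BV}(1)$ with the arity-$1$ part and $\g_{\HyperCom}=\bigoplus_{n\ge 2}\mathfrak{l}_{\BV}(n)$ with the arity-$\ge 2$ part. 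Accordingly, it suffices to prove (a) that $\ell^\alpha_1(r(z))$ has no arity-$1$ component, so that it lies in $\g_{\HyperCom}$, and (b) that it is annihilated by the twisted differential of $\g_{\HyperCom}$, i.e. lies in $\Ker\,\ell^\alpha_1|_{\g_{\HyperCom}}=T_\alpha\,\MC(\g_{\HyperCom})$.

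For (a), I would expand the gauge vector field as $\ell^\alpha_1(r(z))=\sum_{p\ge 0}\frac{1}{p!}\ell_{1+p}(\alpha,\ldots,\alpha,r(z))$ and bookkeep arities. Writing $\alpha=\sum_{n\ge 2}\alpha_n$ and $r(z)\in\mathfrak{l}_{\BV}(1)$, a bracket $\ell_{1+p}(\alpha_{k_1},\ldots,\alpha_{k_p},r(z))$ is a treewise composition of $p+1$ operations and therefore has arity $k_1+\cdots+k_p+1-p$. Each $k_i\ge 2$ forces this to be at least $p+1$, hence at least $2$ as soon as $p\ge 1$; thus every term with $p\ge 1$ lands in $\g_{\HyperCom}$. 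The only possible arity-$1$ contribution is the term $p=0$, namely $\ell_1(r(z))=(\partial_A)_*(r(z))$, and this vanishes because the hypercommutative algebra lives on a graded vector space carrying the trivial differential $d_A=0$. Consequently $\ell^\alpha_1(r(z))\in\g_{\HyperCom}$.

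For (b), I would invoke two general facts. First, since $\alpha$ is a Maurer--Cartan element of the $L_\infty$-algebra $\mathfrak{l}_{\BV}$, the twisted maps $\{\ell^\alpha_n\}$ again form an $L_\infty$-algebra, so $(\ell^\alpha_1)^2=0$; applied to $r(z)$ this gives $\ell^\alpha_1(\ell^\alpha_1(r(z)))=0$ in $\mathfrak{l}_{\BV}$. Second, because $\g_{\HyperCom}$ is a dg Lie subalgebra, the higher brackets $\ell_n$ for $n\ge 3$ vanish on its elements, so on an input $x\in\g_{\HyperCom}$ one has $\ell^\alpha_1(x)=\partial x+[\alpha,x]$; that is, the restriction of $\ell^\alpha_1$ to $\g_{\HyperCom}$ is exactly the twisted differential of the dg Lie algebra $\g_{\HyperCom}^\alpha$. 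Combining these with (a), the element $\ell^\alpha_1(r(z))$ belongs to $\g_{\HyperCom}$ and is sent to $0$ by the twisted differential of $\g_{\HyperCom}$, which is precisely the assertion $\ell^\alpha_1(r(z))\in T_\alpha\,\MC(\g_{\HyperCom})$.

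The only genuine work is the arity bookkeeping of step (a); once the two summands are recognised to be separated by arity, the tangency in (b) is formal, following from the Maurer--Cartan property of $\alpha$ and from $\g_{\HyperCom}$ being an honest dg Lie subalgebra rather than merely an $L_\infty$-subalgebra. The subtle point to keep in mind is that (a) genuinely uses $d_A=0$: were $A$ to carry a nontrivial differential, the term $(\partial_A)_*(r(z))$ would survive in arity $1$ and the gauge deformation would leave the locus $\g_{\HyperCom}$.
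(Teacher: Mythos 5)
Your steps (a) and (b) are individually sound, and (a) is essentially the paper's own opening argument: the $\ell_1$-term vanishes because neither $A$ nor $\mathcal{H}$ carries a differential, and the arity grading of the convolution algebra forces every term with $p\ge 1$ into arities $\ge 2$, hence into $\g_{\HyperCom}$. Your step (b) is a clean formal tangency argument; it is correct granted the paper's assertion that $\mathfrak{l}_{\BV}$ is an extension of the dg Lie algebras $\g_{\Delta}$ and $\g_{\HyperCom}$ in the category of $L_\infty$-algebras, so that the brackets $\ell_n$, $n\ge 3$, vanish on inputs from $\g_{\HyperCom}$ and $\ell^\alpha_1$ restricted to $\g_{\HyperCom}$ is the twisted differential $[\alpha,-]$. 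In fact the paper leaves this tangency step implicit, so (b) spells out something the paper glosses over.

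Nevertheless there is a genuine gap: what you prove is only that the deformation stays in the class of \emph{homotopy} hypercommutative algebra structures, i.e.\ among Maurer--Cartan elements of $\g_{\HyperCom}$, whereas the lemma -- in this paper's terminology, where ``hypercommutative algebra'' means a strict one -- asserts that a strict structure deforms to a strict structure. Concretely, under the isomorphism $\Im\, H d_\psi\cong \HyperCom^{\ac}$ with its weight grading $(\Im\, H d_\psi)^{[k]}$ by the number of $\mu$-labelled vertices, a hypercommutative algebra is a Maurer--Cartan element $\alpha$ vanishing outside weight $1$, and the substantive content of the lemma is that $\ell^\alpha_1(r(z))$ also vanishes outside weight $1$. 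This is where almost all of the paper's proof lives: one uses the Maurer--Cartan property of $\alpha$ to reduce each term $\ell_n(r(z),\alpha,\ldots,\alpha)$ of the homotopy-transfer formulas to ``left combs'', and then traces the recursive evaluation of such a comb on an element of $(\Im\, H d_\psi)^{[k]}$ to show that it vanishes for $k\ge 2$ (the weight is preserved step by step, and at the top of the comb one is forced to apply $i(\alpha)$ to a higher-weight element, or $i(r)$ to an element of $\Im\, H d_\psi$, both of which give zero). No purely formal argument of the kind you give can yield this: a priori the gauge vector field could introduce genuine higher homotopy corrections. The stronger statement is exactly what the paper needs afterwards -- the proof of Theorem~\ref{thm:MainGiv=LinftyAction} invokes the fact that ``both the Givental action and the gauge action take a hypercommutative algebra to a hypercommutative algebra'' and compares the two actions only on the weight-one generators, reusing the left-comb analysis set up in this very proof -- so omitting the weight argument leaves the lemma too weak for its intended use.
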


\begin{proof}
Since neither $A$ nor the homotopy cooperad ${\mathcal{H}}$ 
has a differential, the first term in the formula 
$$\ell^\alpha_1(r(z))=\sum_{n\ge1}\frac{1}{(n-1)!}\ell_n(\alpha, \ldots, \alpha,r(z))$$
for the infinitesimal gauge symmetry vanishes. Therefore, only the terms with $n>1$ contribute to the above sum. Since the $L_\infty$-algebra $\mathfrak{l}_{\BV}$ is a convolution algebra, it is graded by arity of the maps minus one. So the map $\ell^\alpha_1(r(z))$ vanishes on the arity one elements $\overline{T}^c(\delta)$.

 Since $\alpha$ is a Maurer--Cartan element, any tree in the homotopy transfer formulae which has two leaves with the same parent both labelled by~$i(\alpha)$ contributes zero to the terms~$\ell_n(\alpha, \ldots, \alpha,r(z))$. Therefore the only non-trivial contributions to the homotopy transfer formulae for $$\ell_n(\alpha, \ldots, \alpha,r(z))=(-1)^{n-1}\ell_n(r(z),\alpha, \ldots, \alpha)$$ are given by ``left combs'' 
$$\xymatrix@=1em{i(r(z)) \ar@{-}[dr] & &i(\alpha)\ar@{-}[dl] & && \\
&\ar@{-}[dr]^h [ \; , \, ]&&i(\alpha)  \ar@{-}[dl]& &\\
&&\ar@{..}[dr][ \; , \, ]&& &\\
&&&\ar@{-}[dr]^h& &i(\alpha)  \ar@{-}[dl]\\
&&&&[ \; , \, ]\ar@{-}[d]& \\
&&&&p\ , &   } $$
where $i=(pr)^*$ and $h=(\delta H)^*$.
To compute such a term as an element of $\mathfrak{l}_{\BV}$, we should be able to evaluate it on any element $b$ of $\mathcal{H}$. For that, there is the following recursive procedure. Such an element, viewed as an element in $\Im\, Hd_\psi\subset G^{\ac}$, should be decomposed into two in all possible ways in the cooperad $G^{\ac}$. In the result $b_1\circ_i b_2$ of such a decomposition one should apply $i(\alpha)$ to one of the arguments, the remaining part of our left comb to the other argument, and anti-symmetrise with respect to that choice. 

Recall from \cite{DrummondColeVallette11} that $\Im\,  H d_\psi$ is weight graded by the number of vertices labelled by $\mu$; we denote it by  $(\Im\,  H d_\psi )^{[k]}$, for $k\ge 1$. This weight grading corresponds to the usual weight grading of the Koszul dual cooperad under the isomorphism 
$ \Im\,  H d_\psi\cong \HyperCom^{\ac}$. So a hypercommutative algebra structure is equivalent to a Maurer--Cartan element $\alpha\in \MC(\g_{\HyperCom})$ which vanishes outside $(\Im\,  H d_\psi )^{[1]}$. Tracing the above recipe for computing the left comb as an element of $\mathfrak{l}_{\BV}$, one sees that if $\alpha$ vanishes outside $(\Im\,  H d_\psi )^{[1]}$, then $\ell_n(r(z), \alpha, \ldots, \alpha)$ satisfies the same property. Indeed, suppose that we compute $\ell_n(r(z), \alpha, \ldots, \alpha)$ on an element $b$ from $(\Im\,  H d_\psi )^{[k]}$. Under the first decomposition we get one element to which we apply $i(\alpha)$ right away, and another element, to which we apply the rest of our left comb, which has $h$ at the root, amounting to applying $\delta H$. The former element must belong to $(\Im\,  H d_\psi )^{[1]}$ in order to be able to apply $i(\alpha)$ to it, hence the latter element will belong to $(\Im\,  H d_\psi )^{[k-1]}$, and after applying $\delta H$ will be back in $(\Im\,  H d_\psi 
)^{[k]}$. At the leaf level of the comb we shall end up with a zero contribution, since we shall be forced to apply $i(\alpha)$ to an element from $(\Im\,  H d_\psi )^{[k]}$ (and since $i(r)$ vanishes on $\Im\,  H d_\psi$). This completes the proof.
\end{proof}

\subsection{Proof of Theorem~\ref{thm:MainGiv=LinftyAction}}\label{subsec:EndProof}

\begin{proof}[Proof of Theorem~\ref{thm:MainGiv=LinftyAction}]
Let us examine the formulae in question carefully. Shuffle trees that we always use suggest that to make the notation most economic, we should replace each term $\ell_n(\alpha,\ldots,\alpha,r(z))$ in the formula for the infinitesimal gauge symmetry by the equal term $(-1)^{n-1}\ell_n(r(z),\alpha,\ldots,\alpha)$, and examine those terms without signs. At the stage when we deal with the Givental formulae, we shall also modify them accordingly. Let us note that the only non-trivial contributions to $\ell^\alpha_1(r(z))$ come from the terms $\ell_n(r_{n-2} z^{n-2}, \alpha, \ldots, \alpha)$. Indeed, if we evaluate the homotopy transfer formula for $\ell_n(\alpha, \ldots, \alpha, r(z))$ on a particular element $b$ of $\Im\,  H d_\psi$, we see that in the inductive computation of the result, as described in the proof of Lemma~\ref{lem:THCom},  the total power $\delta^{n-2}$ accumulates (each for one occurrence of~$h$). In the end, we apply $r(z)$ to that power of $\delta$, so only $r_{n-2}$ matters.  Moreover, the 
contribution of $\ell_n(r_{n-2} z^{n-2}, \alpha, \ldots, \alpha)$ to $\ell^\alpha_1(r(z))$ is precisely the left comb 
$$\xymatrix@=1em{i(r_{n-2}z^{n-2}) \ar@{-}[dr] & &i(\alpha)\ar@{-}[dl] & && \\
&\ar@{-}[dr]^h [ \; , \, ]&&i(\alpha)  \ar@{-}[dl]& &\\
&&\ar@{..}[dr][ \; , \, ]&& &\\
&&&\ar@{-}[dr]^h& &i(\alpha)  \ar@{-}[dl]\\
&&&&[ \; , \, ]\ar@{-}[d]& \\
&&&&p\ .&   } $$
The coefficient $\frac{1}{(n-1)!}$ disappears since there are $(n-1)!$ shuffle left combs of arity $n$.

Since both sides of the formula, we want to prove, are linear in $r(z)$, it is sufficient to prove the equality for each component $r_k$ individually. Furthermore, due to the factorisation property, a CohFT is completely defined by its values on fundamental cycles, and equivalently, a hypercommutative algebra is defined once we defined its generating operations. Since we know that both the Givental action and the gauge action take a hypercommutative algebra to a hypercommutative algebra, it is sufficient to show that the Givental formula, once integrated over fundamental cycles gives the same operations as the gauge symmetry formula on $(\Im\,  H d_\psi )^{[1]}$. We shall prove that by induction on $k$, showing that both satisfy the same kind of recursion relation. 

Let us denote by $\lambda^{(k)}_{n}\in\End_A(n)$ the value of the element $(-1)^{k-1}(\widehat{rz^k}.\alpha)_n$ on the fundamental cycle of $\overline{\mathcal{M}}_{0,n+1}$  (recall that we should change signs in the exact same way as we did for the gauge action), and by $\theta^{(k)}_n\in\End_A(n)$ the value of $\ell_{k+2}(rz^k,\alpha,\ldots,\alpha)$ on the $n$-ary generator from $(\Im\,  H d_\psi )^{[1]}$. 
Finally, let us denote by $\nu_n$ the value of the element $\alpha$ on the fundamental cycle of $\overline{\mathcal{M}}_{0,n+1}$. 
We remark that the elements $\lambda^{(k)}_{n}$ and $\theta^{(k)}_n$ are well defined for $k\ge 1$. 

For $k=0$, although the corresponding elements do not literally belong to the Givental formalism or to the gauge symmetries respectively, the actual formulae make sense and are applicable  as follows. The Givental action extends to the action of the Lie algebra $\End(A)[[z]]$ under the same Formula~(\ref{Givental-action-genus-0}). The gauge symmetry action can be generalised by considering the transfered homotopy cooperad structure on the augmented $\Sy$-module $\mathcal{H}\oplus \I$, where $I$ is concentrated in arity one $I=(0, \id, 0, \ldots)$, obtained from the extended cooperad structure on $\BV^{\ac}$ given by 
$$\mu \mapsto \Delta_{\overline{\BV}^{\ac}}(\mu)+
\sum_{m=1}^n \mu \circ_m \id +
\id \circ_1 \mu\ .$$  
It is straightforward to check that the image of ${\mathcal{H}}$ under the strictly higher structure maps of the 
homotopy 
 cooperad structure on $\cH \oplus \I$ produced by the homotopy transfer theorem for cooperads \cite[Theorem~$3.3$]{DrummondColeVallette11} remains the same. Only the cooperad part on $\cH$ of this homotopy cooperad structure on $\cH\oplus \I$ gets modified; it is given by the same extended formula as above for $\Delta_{\BV^{\ac}}$. 
 Writing $\cH \oplus \I={T}^c(\delta)   \oplus \Im\, Hd_\psi$, we are now working in the $L_\infty$-algebra 
$$\Hom({T}^c(\delta), \End(A)) \oplus \Hom_\Sy(\Im\, Hd_\psi, \End(A)) \cong \End(A)[[z]] \oplus \g_{\HyperCom} \ .$$
In the end, this generalisation modifies only $\theta^{(0)}$ by the preceeding argument.

Both of these two generalisations give the commutator in the endomorphism operad, for $k=0$:  
$$
\lambda^{(0)}_{n}=\theta^{(0)}_n=\bbb{-}r\circ_1\nu_{n}\bbb{+}\sum_{m=1}^n\nu_{n}\circ_m r\ .
 $$
We show in the remaining part of this section that for each $k\ge 0$:
\begin{gather*}
\lambda^{(k+1)}_{n}=\sum_{I\sqcup J=\underline{n}}\frac{\binom{|I|}{2}}{\binom{n}{2}}\lambda^{(k)}_{|J|+1}\circ_1\nu_{|I|}-\left(1-\frac{\binom{|I|}{2}}{\binom{n}{2}}\right)\nu_{|J|+1}\circ_1\lambda^{(k)}_{|I|},\\
\theta^{(k+1)}_{n}=\sum_{I\sqcup J=\underline{n}}\frac{\binom{|I|}{2}}{\binom{n}{2}}\theta^{(k)}_{|J|+1}\circ_1\nu_{|I|}-\left(1-\frac{\binom{|I|}{2}}{\binom{n}{2}}\right)\nu_{|J|+1}\circ_1\theta^{(k)}_{|I|}.
\end{gather*}
These formulae, together with the fact that $\lambda^{(0)}_n=\theta^{(0)}_n$ imply that $\lambda^{(k)}_n=\theta^{(k)}_n$ for all~$k$, which concludes the proof.
\end{proof}

\subsection{Recursion relation for the Givental action}

In this section, we shall prove the recursion relation for the Givental action stated above.

\begin{lemma}
The components of the Givental action on the fundamental classes satisfy the recurrence relation 
\begin{equation}\label{recursion-Givental}
\lambda^{(k+1)}_{n}=\sum_{I\sqcup J=\underline{n}}\frac{\binom{|I|}{2}}{\binom{n}{2}}\lambda^{(k)}_{|J|+1}\circ_1\nu_{|I|}-\left(1-\frac{\binom{|I|}{2}}{\binom{n}{2}}\right)\nu_{|J|+1}\circ_1\lambda^{(k)}_{|I|},\quad  \text{for} \quad k\ge 0\ .
\end{equation}
\end{lemma}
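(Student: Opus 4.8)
The plan is to prove the recurrence relation (\ref{recursion-Givental}) directly from the explicit Givental formula (\ref{Givental-action-genus-0}) by integrating over the fundamental cycle of $\overline{\calM}_{0,n+1}$ and rewriting the powers of $\psi$-classes recursively. Recall that $\lambda^{(k)}_n$ is the value of $(-1)^{k-1}(\widehat{rz^k}.\alpha)_n$ on the fundamental cycle, so the starting point is to pair each of the three summands of (\ref{Givental-action-genus-0}) for $\widehat{r_{k+1}z^{k+1}}$ against $[\overline{\calM}_{0,n+1}]$. The key mechanism is that a factor $\psi^{k+1}$ should be split as $\psi \cdot \psi^{k}$, and the single factor $\psi$ is then expressed via the topological recursion relations using the divisor formulae (\ref{TRR-0-root}) and (\ref{TRR-0-sym}); the remaining $\psi^{k}$ gets absorbed into a lower term $\lambda^{(k)}$ by the inductive interpretation, while $\nu$ records the evaluation of $\alpha$ on fundamental cycles.

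Concretely, first I would treat the ``root'' contribution. The term $(-1)^{k+2}r_{k+1}\circ_1 \alpha_n \cdot \psi_0^{k+1}$ carries the class $\psi_0^{k+1}$ at the output marked point $0$. Writing $\psi_0^{k+1}=\psi_0 \cdot \psi_0^{k}$ and substituting formula (\ref{TRR-0-root}) for the single $\psi_0$, the integral over the fundamental cycle decomposes, via the push-forward maps $\rho_*$, into a sum over splittings $I \sqcup J = \underline{n}$. On each boundary stratum $\overline{\calM}_{0,|J|+1}\times\overline{\calM}_{0,|I|}$ the remaining $\psi_0^{k}$ restricts to a $\psi$-power on the component carrying the marked point $0$, producing a factor of $\lambda^{(k)}$ on one side and a fundamental-cycle evaluation $\nu$ on the other. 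The combinatorial coefficient $\binom{|I|}{2}/\binom{n}{2}$ arises precisely as the proportion of the ways of distributing the distinguished pair $i_1, i_2$ (or equivalently as the weight bookkeeping $\omega(v)$ of the earlier definition of $H$), reflecting which splittings actually contribute to the $\psi_0$ divisor expansion.

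Second I would treat the ``leaf'' contributions $\sum_m \alpha_n \cdot \psi_m^{k+1}\circ_m r_{k+1}$ together with the genuinely mixed last term $\tilde\rho_*\bigl((\alpha_{|J|+1}\cdot\psi_1^{j})\otimes(r_{k+1}\circ_1\alpha_{|I|}\cdot\psi_0^{i})\bigr)$. For the leaf term I would use the symmetric divisor relation (\ref{TRR-0-sym}) to rewrite $\psi_m$, noting that $\psi_m + \psi_0$ is the boundary sum, so that after splitting one produces both a $\lambda^{(k)}$ term and the complementary $\nu_{|J|+1}\circ_1 \lambda^{(k)}_{|I|}$ piece with the coefficient $1 - \binom{|I|}{2}/\binom{n}{2}$. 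The last summand of (\ref{Givental-action-genus-0}), which already lives on a boundary divisor and carries the bidegree constraint $i+j=k-1$, reorganizes upon summing over $i,j$ into exactly these $\psi^{k}$-powers on the two components; the alternating sign $(-1)^{i+1}$ combines with the overall $(-1)^{k+1}$ and the $(-1)^{k-1}$ normalization of $\lambda^{(k)}$ to yield the signs displayed in (\ref{recursion-Givental}). Collecting the root and leaf contributions, the two terms $\lambda^{(k)}_{|J|+1}\circ_1\nu_{|I|}$ and $-\nu_{|J|+1}\circ_1\lambda^{(k)}_{|I|}$ emerge with the stated coefficients.

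The main obstacle I anticipate is the careful bookkeeping of the combinatorial coefficients and signs, rather than any conceptual difficulty. Specifically, one must verify that summing the divisor relations (\ref{TRR-0-root}) and (\ref{TRR-0-sym}) over all marked points and all internal strata produces exactly the weight ratio $\binom{|I|}{2}/\binom{n}{2}$ and its complement, and that the push-forward $\tilde\rho_*$ correctly implements the operadic composition $\circ_1$ at the node so that the evaluations assemble into $\lambda^{(k)}_{|J|+1}\circ_1\nu_{|I|}$ and $\nu_{|J|+1}\circ_1\lambda^{(k)}_{|I|}$. Matching the sign conventions—the $(-1)^{k+1}$ in the Givental formula, the $(-1)^{i+1}$ in the mixed term, and the normalization $(-1)^{k-1}$ built into the definition of $\lambda^{(k)}$—requires bounding through the bidegree constraint $i+j=k-1$ and confirming that the telescoping of $\psi$-powers does not introduce spurious terms. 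Once these coefficient and sign computations are settled, the recurrence (\ref{recursion-Givental}) follows.
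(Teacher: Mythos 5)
Your overall strategy coincides with the paper's: evaluate \eqref{Givental-action-genus-0} on the fundamental class, strip one $\psi$-factor at a time using the divisor relations \eqref{TRR-0-root} and \eqref{TRR-0-sym}, average over choices of marked points to produce the weights $\binom{|I|}{2}/\binom{n}{2}$, and conclude by induction on $k$. Your treatment of the first summand (the root term) and of the leaf terms is essentially the paper's computation. The genuine gap is in the third summand, the boundary terms $\tilde{\rho}_*\bigl((\alpha_{|J|+1}\cdot\psi_1^{j})\otimes(r\circ_1\alpha_{|I|}\cdot\psi_0^{i})\bigr)$ with $i+j=k$. You assert that these ``reorganize upon summing over $i,j$ into exactly these $\psi^{k}$-powers on the two components,'' i.e.\ that no further geometric manipulation is needed. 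A degree-and-stratum count shows this cannot work: these terms carry total $\psi$-power $k$ on a \emph{two}-corolla stratum, whereas the mixed pieces of the right-hand side of \eqref{recursion-Givental} --- obtained by expanding $\lambda^{(k)}$ itself via \eqref{Givental-action-genus-0} and composing with $\nu$ --- carry total $\psi$-power $k-1$ on a \emph{three}-corolla stratum. So every boundary term of the left-hand side must undergo one more application of the TRR relations; they are the terms requiring the most work, not the ones that assemble for free.

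The device the paper uses here, and which is absent from your proposal, is to first split each boundary term into two pieces weighted by $\frac{\binom{|I|}{2}}{\binom{n}{2}}$ and $1-\frac{\binom{|I|}{2}}{\binom{n}{2}}=\frac{\binom{|J|}{2}+|I||J|}{\binom{n}{2}}$, and then apply \eqref{TRR-0-root} to the $\psi_0$ on the $I$-side of the first piece, and \eqref{TRR-0-sym} followed by \eqref{TRR-0-root} to the $\psi_1$ on the $J$-side of the second piece. It is this prior weighted splitting that routes each boundary term into the two different terms of \eqref{recursion-Givental} with the correct coefficients; applying TRR without it produces coefficient $1$ rather than $\binom{|I|}{2}/\binom{n}{2}$ or its complement. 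Two further points you would have to supply: the averaging for the second piece is \emph{not} uniform over all pairs of points --- one averages over $\binom{|J|}{2}$ pairs of leaves in $J$ together with pairs consisting of one leaf in $J$ and the node connecting the two corollas, counted with multiplicity $|I|$ --- and the extreme terms $i=0$ and $j=0$, where one $\psi$-class is absent, must be treated separately, as they supply exactly the ``missing'' contributions that complete the assembly. Relatedly, your claim that the root term produces ``a factor of $\lambda^{(k)}$ on one side'' is too strong: it produces only the root piece of $\lambda^{(k)}_{|J|+1}\circ_1\nu_{|I|}$; the leaf and mixed pieces of that same product are supplied by the other two summands, so the recursion only emerges from a global cross-assembly of all the contributions, which is precisely the content the proposal leaves unverified.
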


\begin{proof}
Proving \eqref{recursion-Givental} essentially amounts to somewhat imaginative application of Relations \eqref{TRR-0-root} and~\eqref{TRR-0-sym}. Let us explain how that is done. 

We evaluate $(-1)^k(\widehat{rz^{k+1}}.\alpha)_n$ using Formula \eqref{Givental-action-genus-0} as 
\begin{multline*}
r\circ_1\alpha_{n}\cdot\psi_0^{k+1}+(-1)^{k+2}\sum_{m=1}^n\alpha_{n}\cdot\psi_m^{k+1}\circ_m r+\\+
\sum_{\substack{I\sqcup J=\underline{n}, |I|\ge2,\\ i+j=k}}(-1)^{j+1}\tilde{\rho}_*\left(\big(\alpha_{|J|+1}\cdot\psi_1^j\big)\otimes \big(r \circ_1\alpha_{|I|}\cdot \psi_0^i\big)\right).
\end{multline*}

We shall represent this and subsequent formulae pictorially, so that 
 $$
r\circ_1\alpha_{n}\cdot\psi_0^{k+1}=
\vcenter{
\xymatrix@R=2mm@C=2mm{
\ar@{-}[dr]&\ldots&\ar@{-}[dl]\\
&*++[o][F-]{\underline{n}}\ar@{-}[d]&\\
&r\psi^{k+1}&\\
&&
}},
 $$
 $$
 \alpha_{n}\cdot\psi_m^{k+1}\circ_m r=
\vcenter{
\xymatrix@R=2mm@C=2mm{
\ldots\ar@{-}[dr]&r\psi^{k+1}\ar@{-}[d]&\ldots\ar@{-}[dl]\\
&*++[o][F-]{\underline{n}}\ar@{-}[d]&\\
&&
}},
 $$
and 
 $$
 \tilde{\rho}_*\left(\big(\alpha_{|J|+1}\cdot\psi_1^j\big)\otimes  \big(r \circ_1\alpha_{|I|}\cdot \psi_0^i\big)\right)=
\vcenter{
\xymatrix@R=.2mm@C=1.5mm{
 \ar@{-}[dr]&\cdots& \,\,\, \ar@{-}[dl] & \\
&*++[o][F-]{I}\ar@{-}[dr]& &&\\
&&\psi^i\ar@{-}[dr]&&\\
&&&r\ar@{-}[dr]& &\\
&&&&\psi^j\ar@{-}[dr]&\ldots&\, \ar@{-}[dl]&\\
&&&&&*++[o][F-]{J}\ar@{-}[d]&&&\\
&&&&& *{}
}}\hspace{-7mm},
 $$
so that the label of each vertex is the set of its ``free'' inputs. 

First, let us rewrite the first term in the formula for the Givental action using Formula \eqref{TRR-0-root}. That formula depends on a choice of $i_1,i_2\in\underline{n}$, and to obtain something symmetric, we shall average over all such choices.  {Using the factorisation property of genus 0 CohFT}, we obtain
\begin{equation}\label{pair-with-alpha-above-and-psi-below}
\vcenter{
\xymatrix@R=2mm@C=2mm{
\ar@{-}[dr]&\ldots&\ar@{-}[dl]\\
&*++[o][F-]{\underline{n}}\ar@{-}[d]&\\
&r\psi^{k+1}&\\
&&
}}
=
\sum_{I\sqcup J=\underline{n}}\frac{\binom{|I|}{2}}{\binom{n}{2}}
\vcenter{
\xymatrix@R=1mm@C=2mm{
\ar@{-}[dr]&\ldots& \,\,\, &\\
&*++[o][F-]{I}\ar@{-}[dr]\ar@{-}[ur]& \ldots& \ar@{-}[dl]&\\
&&*++[o][F-]{J}\ar@{-}[d]&&&\\
&&*{r\psi^{k}}&\\
&&
}}\hspace{-7mm}\ ,
\end{equation}
{where the tree on the right-hand side represents 
$   \tilde{\rho}_*\big((r\circ_1\psi_0^k.\alpha_{|J|+1})\otimes \alpha_{|I|}\big) $. }

To deal with the first sum in the formula, let us recall that, by Formula \eqref{TRR-0-sym}, we have
 $$
\vcenter{
\xymatrix@R=1mm@C=2mm{
\ldots\ar@{-}[dr]&r\psi^{k+1}\ar@{-}[d]&\ldots\ar@{-}[dl]\\
&*++[o][F-]{\underline{n}}\ar@{-}[d]&\\
&&
}}+ 
\vcenter{
\xymatrix@R=1mm@C=2mm{
\ldots\ar@{-}[dr]&r\psi^{k}\ar@{-}[d]&\ldots\ar@{-}[dl]\\
&*++[o][F-]{\underline{n}}\ar@{-}[d]&\\
&\psi\ar@{-}[d]&\\
&&
}} 
=
\sum_{I\sqcup J=\underline{n}}
\vcenter{
\xymatrix@R=1mm@C=2mm{
\ldots\ar@{-}[dr]&\ar@{-}[d]r\psi^k&\ldots\ar@{-}[dl]&\\
&*++[o][F-]{I}\ar@{-}[dr]& \ldots&\ar@{-}[dl]&\\
&&*++[o][F-]{J}\ar@{-}[d]&&&\\
&&
}}\hspace{-7mm}.
 $$
Since  Formula \eqref{TRR-0-root} gives, 
 $$
\vcenter{
\xymatrix@R=1mm@C=2mm{
\ldots\ar@{-}[dr]&r\psi^{k}\ar@{-}[d]&\ldots\ar@{-}[dl]\\
&*++[o][F-]{\underline{n}}\ar@{-}[d]&\\
&\psi\ar@{-}[d]&\\
&&
}} 
=
\sum_{I\sqcup J=\underline{n}}\frac{\binom{|I|}{2}}{\binom{n}{2}}
\vcenter{
\xymatrix@R=1mm@C=2mm{
\ldots\ar@{-}[dr]&\ar@{-}[d]r\psi^k&\ldots\ar@{-}[dl]&\\
&*++[o][F-]{I}\ar@{-}[dr]& \ldots&\ar@{-}[dl]&\\
&&*++[o][F-]{J}\ar@{-}[d]&&&\\
&&
}}\hspace{-7mm}+
\sum_{I\sqcup J=\underline{n}}\frac{\binom{|I|}{2}}{\binom{n}{2}}
\vcenter{
\xymatrix@R=1mm@C=2mm{
\ar@{-}[dr]&\ldots&\,\,\, &\\
&*++[o][F-]{I}\ar@{-}[dr]\ar@{-}[ur]& r\psi^k\ar@{-}[d]&\ldots\ar@{-}[dl]&\\
&&*++[o][F-]{J}\ar@{-}[d]&&&\\
&&
}}\hspace{-7mm},
 $$
then we have
\begin{multline}\label{pair-with-alpha-below-and-pair-with-alpha-above-and-psi-above}
(-1)^{k+2}\vcenter{
\xymatrix@R=1mm@C=2mm{
\ldots\ar@{-}[dr]&r\psi^{k+1}\ar@{-}[d]&\ldots\ar@{-}[dl]\\
&*++[o][F-]{\underline{n}}\ar@{-}[d]&\\
&&
}}= 
(-1)^{k+1}\sum_{I\sqcup J=\underline{n}}\left(\frac{\binom{|I|}{2}}{\binom{n}{2}}-1\right)
\vcenter{
\xymatrix@R=1mm@C=2mm{
\ldots\ar@{-}[dr]&\ar@{-}[d]r\psi^k&\ldots\ar@{-}[dl]&\\
&*++[o][F-]{I}\ar@{-}[dr]& \ldots&\ar@{-}[dl]&\\
&&*++[o][F-]{J}\ar@{-}[d]&&&\\
&&
}}\hspace{-7mm}+\\+(-1)^{k+1}
\sum_{I\sqcup J=\underline{n}}\frac{\binom{|I|}{2}}{\binom{n}{2}}
\vcenter{
\xymatrix@R=1mm@C=2mm{
\ar@{-}[dr]&\ldots&\ar@{-}[dl]\, \, &\\
&*++[o][F-]{I}\ar@{-}[dr]& r\psi^k\ar@{-}[d]&\ldots\ar@{-}[dl]&\\
&&*++[o][F-]{J}\ar@{-}[d]&&&\\
&&
}}\hspace{-7mm}.
\end{multline}

Let us rewrite the second sum in the formula. Let us first outline our strategy. For $i,j>0$, we  split the term
 $$
(-1)^{j+1}
\vcenter{
\xymatrix@R=.2mm@C=1.5mm{
\ar@{-}[dr]&\cdots&\,\, \ar@{-}[dl]&\\
&*++[o][F-]{I}\ar@{-}[dr]& &&\\
&&\psi^i\ar@{-}[dr]&&\\
&&&r\ar@{-}[dr]& &\\
&&&&\psi^j\ar@{-}[dr]&\ldots&\, \ar@{-}[dl]&\\
&&&&&*++[o][F-]{J}\ar@{-}[d]&&&\\
&&&&& *{}
}}\hspace{-7mm}
 $$
into two pieces, 
 $$
(-1)^{j+1}\frac{\binom{|I|}{2}}{\binom{n}{2}}
\vcenter{
\xymatrix@R=.2mm@C=1.5mm{
\ar@{-}[dr]&\cdots&\, \ar@{-}[dl]&\\
&*++[o][F-]{I}\ar@{-}[dr]& &&\\
&&\psi^i\ar@{-}[dr]&&\\
&&&r\ar@{-}[dr]& &\\
&&&&\psi^j\ar@{-}[dr]&\ldots&\ar@{-}[dl]\,\,&\\
&&&&&*++[o][F-]{J}\ar@{-}[d]&&&\\
&&&&&*{}
}}\hspace{-5mm}
\text{ and }\quad
(-1)^{j+1}\frac{\binom{|J|}{2}+|I||J|}{\binom{n}{2}}
\vcenter{
\xymatrix@R=.2mm@C=1.5mm{
\ar@{-}[dr]&\cdots&\ar@{-}[dl]\,\, &\\
&*++[o][F-]{I}\ar@{-}[dr]& &&\\
&&\psi^i\ar@{-}[dr]&&\\
&&&r\ar@{-}[dr]& &\\
&&&&\psi^j\ar@{-}[dr]&\ldots&\ar@{-}[dl]\, &\\
&&&&&*++[o][F-]{J}\ar@{-}[d]&&&\\
&&&&&*{}
}}\hspace{-7mm}.
 $$
We  rewrite the first term using Formula \eqref{TRR-0-root}, as in the first case above, and the second term using Formulae \eqref{TRR-0-sym} and \eqref{TRR-0-root}, as in the second case above. Then, we shall again average over various choices, although in the second case, the choices have to be somewhat more subtle than just all possibilities with equal coefficients. Finally, we  examine the contribution of extreme terms (where either $i$ or $j$ is equal to zero). We split them in a similar fashion into a weighted sum of two, but then perform similar computations with the two.

So rewriting 
 $$
(-1)^{j+1}\frac{\binom{|I|}{2}}{\binom{n}{2}}
\vcenter{
\xymatrix@R=.2mm@C=1.5mm{
\ar@{-}[dr]&\cdots&\ar@{-}[dl]\,\, &\\
&*++[o][F-]{I}\ar@{-}[dr]& &&\\
&&\psi^i\ar@{-}[dr]&&\\
&&&r\ar@{-}[dr]& &\\
&&&&\psi^j\ar@{-}[dr]&\ldots&\ar@{-}[dl]\, &\\
&&&&&*++[o][F-]{J}\ar@{-}[d]&&&\\
&&&&&*{}
}}\hspace{-7mm}
 $$
using Formula \eqref{TRR-0-root} yields
\begin{equation}\label{triple-with-alpha-above}
(-1)^{j+1}\sum_{I_1\sqcup I_2\sqcup J=\underline{n}}\frac{\binom{|I_1|}{2}}{\binom{n}{2}}
\vcenter{
\xymatrix@R=.2mm@C=1.5mm{
\ar@{-}[dr]&\cdots&\ar@{-}[dl]\,  &\\
&*++[o][F-]{I_1}\ar@{-}[dr]&\ \ldots&\ar@{-}[dl]\, &\\
&&*++[o][F-]{I_2}\ar@{-}[dr]& &&\\
&&&\psi^{i-1}\ar@{-}[dr]&&\\
&&&&r\ar@{-}[dr]& &\\
&&&&&\psi^j\ar@{-}[dr]&\ldots&\ar@{-}[dl]\,&\\
&&&&&&*++[o][F-]{J}\ar@{-}[d]&&&\\
&&&&&&*{}&&&&
}}\hspace{-7mm},
\end{equation}
Let us keep the factor $1-\frac{\binom{|I|}{2}}{\binom{n}{2}}=\frac{\binom{|J|}{2}+|I||J|}{\binom{n}{2}}$ aside for the moment, and rewrite 
 $$
(-1)^{j+1}
\vcenter{
\xymatrix@R=.2mm@C=1.5mm{
\ar@{-}[dr]&\cdots&\ar@{-}[dl]\, &\\
&*++[o][F-]{I}\ar@{-}[dr]& &&\\
&&\psi^i\ar@{-}[dr]&&\\
&&&r\ar@{-}[dr]& &\\
&&&&\psi^j\ar@{-}[dr]&\ldots&\ar@{-}[dl]\,&\\
&&&&&*++[o][F-]{J}\ar@{-}[d]&&&\\
&&&&&*{}
}}\hspace{-7mm}
 $$
using Formula \eqref{TRR-0-sym}, which yields
 $$
(-1)^{j+1}
\sum_{I\sqcup J_1\sqcup J_2=\underline{n}}
\vcenter{
\xymatrix@R=.2mm@C=1.5mm{
\ar@{-}[dr]&\cdots&\ar@{-}[dl]\, &\\
&*++[o][F-]{I}\ar@{-}[dr]& &&\\
&&\psi^i\ar@{-}[dr]&&\\
&&&r\ar@{-}[dr]& &\\
&&&&\psi^{j-1}\ar@{-}[dr]&\ldots&\ar@{-}[dl]\, &\\
&&&&&*++[o][F-]{J_1}\ar@{-}[dr]&\ldots&\ar@{-}[dl]\, &\\
&&&&&&*++[o][F-]{J_2}\ar@{-}[d]&&&\\
&&&&&&*{}&&
}}\hspace{-7mm}-
(-1)^{j+1}
\vcenter{
\xymatrix@R=1mm@C=1.5mm{
\ar@{-}[dr]&\cdots&\ar@{-}[dl]\, &\\
&*++[o][F-]{I}\ar@{-}[dr]& &&\\
&&\psi^i\ar@{-}[dr]&&\\
&&&r\ar@{-}[dr]& &\\
&&&&\psi^{j-1}\ar@{-}[dr]&\ldots&\ar@{-}[dl]\, &\\
&&&&&*++[o][F-]{J}\ar@{-}[d]&&&\\
&&&&&\psi&\\
&&&&&&
}}\hspace{-7mm}.
 $$
Recalling the constant factors, we shall keep
\begin{equation}\label{part1adding-up}
(-1)^{j+1}
\left(1-\frac{\binom{|I|}{2}}{\binom{n}{2}}\right)
\sum_{I\sqcup J_1\sqcup J_2=\underline{n}}
\vcenter{
\xymatrix@R=.2mm@C=1.5mm{
\ar@{-}[dr]&\cdots&\ar@{-}[dl]\, &\\
&*++[o][F-]{I}\ar@{-}[dr]& &&\\
&&\psi^i\ar@{-}[dr]&&\\
&&&r\ar@{-}[dr]& &\\
&&&&\psi^{j-1}\ar@{-}[dr]&\ldots&\, \ar@{-}[dl]&\\
&&&&&*++[o][F-]{J_1}\ar@{-}[dr]&\ldots&\ar@{-}[dl]\, &\\
&&&&&&*++[o][F-]{J_2}\ar@{-}[d]&&&\\
&&&&&&*{}&&
}}
\end{equation}
as is, and rewrite
 $$
(-1)^{j}
\frac{\binom{|J|}{2}+|I||J|}{\binom{n}{2}}
\vcenter{
\xymatrix@R=1mm@C=1.5mm{
\ar@{-}[dr]&\cdots&\, \ar@{-}[dl]&\\
&*++[o][F-]{I}\ar@{-}[dr]& &&\\
&&\psi^i\ar@{-}[dr]&&\\
&&&r\ar@{-}[dr]& &\\
&&&&\psi^{j-1}\ar@{-}[dr]&\ldots&\, \ar@{-}[dl]&\\
&&&&&*++[o][F-]{J}\ar@{-}[d]&&&\\
&&&&&\psi&\\
&&&&&&
}}\hspace{-7mm}
 $$
using Formula \eqref{TRR-0-root}. We shall average it over the variety of $\binom{|J|}{2}+|I||J|$ different choices of two leaves: $\binom{|J|}{2}$ choices where both leaves belong to $J$, and ``$|I||J|$ choices where one leaf belongs to $I$ and the other leaf belongs to $J$'', or more precisely the choices where one leaf belongs to $J$, and the other leaf is the ``connector'' between the two corollas, taken with multiplicity $|I|$. The result is made up of 
\begin{equation}\label{part2adding-up}
(-1)^{j}
\frac{|I||J_1|}{\binom{n}{2}}
\sum_{I\sqcup J_1\sqcup J_2=\underline{n}}
\vcenter{
\xymatrix@R=1mm@C=1.5mm{
\ar@{-}[dr]&\cdots&\, \ar@{-}[dl]&\\
&*++[o][F-]{I}\ar@{-}[dr]& &&\\
&&\psi^i\ar@{-}[dr]&&\\
&&&r\ar@{-}[dr]& &\\
&&&&\psi^{j-1}\ar@{-}[dr]&\ldots&\, \ar@{-}[dl]&\\
&&&&&*++[o][F-]{J_1}\ar@{-}[dr]&\ldots&\, \ar@{-}[dl]&\\
&&&&&&*++[o][F-]{J_2}\ar@{-}[d]&&&\\
&&&&&&*{}&&
}}\hspace{-7mm},
\end{equation}
\begin{equation}\label{part3adding-up}
(-1)^{j}
\frac{\binom{|J_1|}{2}}{\binom{n}{2}}
\sum_{I\sqcup J_1\sqcup J_2=\underline{n}}
\vcenter{
\xymatrix@R=.2mm@C=1.5mm{
\ar@{-}[dr]&\cdots&\, \ar@{-}[dl]&\\
&*++[o][F-]{I}\ar@{-}[dr]& &&\\
&&\psi^i\ar@{-}[dr]&&\\
&&&r\ar@{-}[dr]& &\\
&&&&\psi^{j-1}\ar@{-}[dr]&\ldots&\, \ar@{-}[dl]&\\
&&&&&*++[o][F-]{J_1}\ar@{-}[dr]&\ldots&\, \ar@{-}[dl]&\\
&&&&&&*++[o][F-]{J_2}\ar@{-}[d]&&&\\
&&&&&&*{}&&
}}\hspace{-7mm},
\end{equation}
and
\begin{equation}\label{fork-with-alpha-above}
(-1)^{j}
\frac{\binom{|J_1|}{2}}{\binom{n}{2}}
\sum_{I\sqcup J_1\sqcup J_2=\underline{n}}
\vcenter{
\xymatrix@R=1mm@C=1.5mm{
\ar@{-}[dr]&\cdots&\, \ar@{-}[dl]&\\
&*++[o][F-]{I}\ar@{-}[dr]& &&\\
&&\psi^i\ar@{-}[dr]&&\\
&&&r\ar@{-}[dr]& &\, \ar@{-}[dr]&\ldots& \ar@{-}[dl]\\
&&&&\psi^{j-1}\ar@{-}[dr]&\ldots&*++[o][F-]{J_1}\ar@{-}[dl]&\\
&&&&&*++[o][F-]{J_2}\ar@{-}[d]&&\\
&&&&&*{}&&&.
}}\hspace{-7mm}
\end{equation}
Let us remark that the terms \eqref{part1adding-up}, \eqref{part2adding-up}, and \eqref{part3adding-up} collect altogether into
\begin{equation}\label{triple-with-alpha-below}
(-1)^{j}
\left(\frac{\binom{|I\sqcup J_1|}{2}}{\binom{n}{2}}-1\right)
\sum_{I\sqcup J_1\sqcup J_2=\underline{n}}
\vcenter{
\xymatrix@R=1mm@C=1.5mm{
\ar@{-}[dr]&\cdots&\, \ar@{-}[dl]&\\
&*++[o][F-]{I}\ar@{-}[dr]& &&\\
&&\psi^i\ar@{-}[dr]&&\\
&&&r\ar@{-}[dr]& &\\
&&&&\psi^{j-1}\ar@{-}[dr]&\ldots&\, \ar@{-}[dl]&\\
&&&&&*++[o][F-]{J_1}\ar@{-}[dr]&\ldots&\ar@{-}[dl]&\\
&&&&&&*++[o][F-]{J_2}\ar@{-}[d]&&&\\
&&&&&&*{}&&
}}\hspace{-7mm},
\end{equation}
There are just two terms where some of our manipulations would not work (they correspond to $i=0$ and to $j=0$, where either the top $\psi$-class or the bottom $\psi$-class is missing, and therefore only one of the two weighted parts into which we split the respective term would be rewritten). Those additional contributions are
\begin{equation}\label{pair-with-alpha-above-and-psi-above-missing}
(-1)^{k+1}
\frac{\binom{|I|}{2}}{\binom{n}{2}}
\vcenter{
\xymatrix@R=.2mm@C=1.5mm{
\ar@{-}[dr]&\cdots&\, \ar@{-}[dl]&\\
&*++[o][F-]{I}\ar@{-}[dr]& &&\\
&&r\ar@{-}[dr]& &\\
&&&\psi^k\ar@{-}[dr]&\ldots&\ar@{-}[dl]&\\
&&&&*++[o][F-]{J}\ar@{-}[d]&&&\\
&&&&*{}
}}\hspace{-7mm}
\end{equation}
and
\begin{equation}\label{pair-with-alpha-below-and-psi-below-missing}
-
(1-\frac{\binom{|I|}{2}}{\binom{n}{2}})
\vcenter{
\xymatrix@R=.2mm@C=1.5mm{
\ar@{-}[dr]&\cdots&\, \ar@{-}[dl]&\\
&*++[o][F-]{I}\ar@{-}[dr]& &&\\
&&\psi^{k}\ar@{-}[dr]&&\\
&&&r\ar@{-}[dr]& \ldots&\ar@{-}[dl]&\\
&&&&*++[o][F-]{J}\ar@{-}[d]&&&\\
&&&&*{}
}}\hspace{-7mm}.
\end{equation}
Finally, {we integrate all the above terms over  $\overline{\mathcal{M}}_{0,|J|+2}\times \overline{\mathcal{M}}_{0,|I|+1}$.}
We notice that the term $$\frac{\binom{|I|}{2}}{\binom{n}{2}}\lambda^{(k)}_{|J|+1}\circ_1\nu_{|I|}$$ is assembled precisely out of the contributions
of Formula~\eqref{pair-with-alpha-above-and-psi-below}, the second half of Formula~\eqref{pair-with-alpha-below-and-pair-with-alpha-above-and-psi-above},
and Formulae~\eqref{triple-with-alpha-above}, \eqref{fork-with-alpha-above}, and~\eqref{pair-with-alpha-above-and-psi-above-missing}, while the term 
$$-\left(1-\frac{\binom{|I|}{2}}{\binom{n}{2}}\right)\nu_{|J|+1}\circ_1\lambda^{(k)}_{|I|}$$ is assembled precisely out of the contributions
of the first half of Formula~\eqref{pair-with-alpha-below-and-pair-with-alpha-above-and-psi-above}, and Formulae \eqref{triple-with-alpha-below} and~\eqref{pair-with-alpha-below-and-psi-below-missing}.
\end{proof}

\subsection{Recursion relation for the  gauge symmetries action}

Let us now prove the same recursion relation for the  gauge symmetries action.

\begin{lemma}
The components of the  gauge symmetries action on the generators of $\HyperCom$ satisfy the recurrence relation
\begin{equation}\label{recursion-gauge}
\theta^{(k+1)}_{n}=\sum_{I\sqcup J=\underline{n}}\frac{\binom{|I|}{2}}{\binom{n}{2}}\theta^{(k)}_{|J|+1}\circ_1\nu_{|I|}-\left(1-\frac{\binom{|I|}{2}}{\binom{n}{2}}\right)\nu_{|J|+1}\circ_1\theta^{(k)}_{|I|},\quad  \text{for} \quad k\ge 0\ .
\end{equation}
\end{lemma}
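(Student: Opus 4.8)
The plan is to prove the recursion (\ref{recursion-gauge}) by a direct computation in the $L_\infty$-algebra $\mathfrak{l}_{\BV}$, peeling off the bottom vertex of the left comb, in exact parallel with the way the Givental recursion (\ref{recursion-Givental}) was extracted from the relations (\ref{TRR-0-root}) and (\ref{TRR-0-sym}). Following Section~\ref{subsec:EndProof}, $\theta^{(k+1)}_n$ is the value of the left comb computing $\ell_{k+3}(rz^{k+1},\alpha,\ldots,\alpha)$ on the $n$-ary generator $b\in(\Im\, Hd_\psi)^{[1]}$, where the leaves carry $i=pr^*$, the internal edges carry $h=(\delta H)^*$, the root output carries $p=\iota^*$, and the vertices carry the convolution bracket of $\g_{\BV}$. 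First I would write $b$, as an element of $G^{\ac}\subset\TTT^c(M)$, in the explicit form $b=\sum_{v}\frac{\omega(v)}{\binom{n}{2}}\,T_v$, the weighted sum over the placements of its single $\mu$ produced by $Hd_\psi$; this weighted sum is the combinatorial avatar of the $\psi$-class, and it is the ultimate source of the binomial coefficients.

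Next I would evaluate the root vertex. Its two inputs are the remaining height-$(k+1)$ subcomb, reached through an edge carrying $h=\delta H$, and the rightmost leaf $i(\alpha)$; evaluating the convolution bracket amounts to applying the partial decomposition $\Delta_{(1)}$ of $G^{\ac}$, gluing at the marked points $1$ and $0$ fixed in Section~\ref{subsec:PsiClasses}, and antisymmetrising. For a fixed splitting $\underline{n}=I\sqcup J$, the single $\mu$ of $b$ lands either in the extracted $I$-corolla or in the complementary $J$-part, and the two choices feed the two branches of the antisymmetrisation. The decisive point is a uniformity property of the transfer maps: in $\delta$-degree zero the projection $pr$ equals $Hd_\psi$, and both the operator $H$ and the operator $Hd_\psi$ send a single basis tree of arity $m$ to the arity-$m$ generator, forgetting the position of the relevant vertex. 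Consequently the $\mu$-position is washed out on each branch and only the total weight survives: using $\sum_{v\in I\text{-subtree}}\omega(v)=\binom{|I|}{2}$ and $\sum_{\text{all }v}\omega(v)=\binom{n}{2}$, the branch in which $i(\alpha)=\alpha\circ pr$ acts on the $I$-corolla contributes the scalar $\binom{|I|}{2}/\binom{n}{2}$, while the branch in which $i(\alpha)$ acts on the $J$-part contributes $1-\binom{|I|}{2}/\binom{n}{2}$.

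Assembling the two branches, the first yields $\theta^{(k)}_{|J|+1}\circ_1\nu_{|I|}$: here $\delta H$ applied to the all-$\beta$ $J$-part produces $\delta$ times the arity-$(|J|+1)$ generator, on which the subcomb evaluates to $\theta^{(k)}_{|J|+1}$ (the extra $\delta$ supplying exactly the missing power, so that the same fixed operator $r$ is extracted), while $i(\alpha)$ on the $I$-corolla gives $\nu_{|I|}$. The antisymmetrised branch yields $\nu_{|J|+1}\circ_1\theta^{(k)}_{|I|}$ with the opposite sign. Summing over $I\sqcup J=\underline{n}$ gives precisely (\ref{recursion-gauge}). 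I would treat the generalised $k=0$ terms exactly as in Section~\ref{subsec:EndProof}, using the extension of the homotopy cooperad structure to $\cH\oplus\I$, which only modifies $\theta^{(0)}$ and reproduces the commutator. It is worth noting that, in contrast with the Givental computation, the re-symmetrising projection $pr=Hd_\psi$ makes the argument uniform, so there is no separate analysis of the extreme terms where a $\psi$-power is missing.

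The main obstacle I anticipate is the bookkeeping of the middle step: verifying that, after the partial decomposition $\Delta_{(1)}$, the individual trees on each branch genuinely reassemble — through $pr=Hd_\psi$ on the $i(\alpha)$-branch and through $\delta H$ on the subcomb branch — into the correct generators weighted by $\binom{|I|}{2}/\binom{n}{2}$ and its complement, rather than into uncontrolled combinations of basis trees. This is the exact homotopical counterpart of the ``imaginative'' use of (\ref{TRR-0-root}) and (\ref{TRR-0-sym}) in the Givental proof, and it is the identity $\sum_{v}\omega(v)=\binom{n}{2}$ together with the uniformity of $Hd_\psi$ that makes it go through. A secondary and purely routine point is to confirm the relative minus sign, which follows from $\alpha$ and the comb both having degree $-1$ and from the normalisation $\ell_n(\alpha,\ldots,\alpha,r)=(-1)^{n-1}\ell_n(r,\alpha,\ldots,\alpha)$ already adopted.
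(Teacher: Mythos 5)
Your proposal is correct and follows essentially the same route as the paper's proof: evaluating the left comb on the generator $b=H(\sum t)$, splitting at the bottom vertex via the decomposition of $G^{\ac}$, using that $\alpha$ vanishes outside $(\Im\, Hd_\psi)^{[1]}$ to force $i(\alpha)$ onto the $\mu$-bearing part, and extracting the coefficients $\binom{|I|}{2}/\binom{n}{2}$ and its complement from the total-weight identities for $H$ and $pr=Hd_\psi$. The only cosmetic difference is that your notation suppresses the sum over underlying shuffle binary tree shapes (which the paper makes explicit when identifying $b'$, resp.\ $b''$, with the sum of all trees on $J\sqcup\{i\}$, resp.\ $I$), but your argument clearly uses it.
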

\begin{proof}
Recall from \ref{subsec:EndProof} that the left-hand side of this formula  is given by the term $\ell_{k+3}(r z^{k+1}, \alpha, \ldots, \alpha)$ inside $\ell^\alpha_1(r(z))$, which reduces to the left comb 
$$C:=\ \ \vcenter{\xymatrix@=1em{i(r z^{k+1}) \ar@{-}[dr] & &i(\alpha)\ar@{-}[dl] & && \\
&\ar@{-}[dr]^h [ \; , \, ]&&i(\alpha)  \ar@{-}[dl]& &\\
&&\ar@{..}[dr][ \; , \, ]&& &\\
&&&\ar@{-}[dr]^h& &i(\alpha)  \ar@{-}[dl]\\
&&&&[ \; , \, ]\ar@{-}[d]& \\
&&&&p&   }} $$
with the $k+1$ edges labelled by~$h$. Let us start evaluating this map on the element $b$ of 
$(\Im\,  H d_\psi )^{[1]}(n)$ representing the $n$-ary generator of~$\HyperCom$. 
It follows from \cite{DrummondColeVallette11} that $b=H\left(\sum t\right)$, where $t$ ranges over all shuffle binary trees with $n$ leaves and with internal vertices  labelled by $\beta$. Thus, the element $b$ is equal to the weighted sum of shuffle binary trees with one internal vertex labelled by $\mu$ and the others labelled by $\beta$. The application of the left comb $C$ to $b$ amounts to computing the weighted sum of elements 
 $$
C'(\delta H(b'))\circ_i i(\alpha)(b'')-i(\alpha)(b')\circ_i C'(\delta H(b''))\ ,
 $$
over all ways to split $b$ as a decomposition $b'\circ_i b''$, where $b'$ has the $J\sqcup\{i\}$ as its set of leaves, $b''$ has $I$ as its set of leaves, and where $C'$ denotes the ``top part'' of the left comb. Since $\alpha$ vanishes outside $(\Im\,  H d_\psi )^{[1]}$, we may only apply $i(\alpha)$ to the part of the decomposition that contains the only corolla labelled by $\mu$. This means that in the term $C'(\delta H(b'))\circ_i i(\alpha)(b'')$ all weights coming from the formula for $H$ are within $b''$. It follows that $b'$ is the sum of all shuffle binary trees with the set of leaves $J\sqcup\{i\}$, and $C'(\delta H(b'))$ is nothing but $\theta^{(k)}_{|J|+1}$, since the additional occurrence of $\delta$ we now have will only affect the power $z$ at the last stage. 
To write  $b''$ as an element of the  image of $H$ we just need to modify the denominators of the weights; all numerators are automatically correct:
$$i(\alpha)(b'')=\frac{\binom{|I|}{2}}{\binom{n}{2}}\alpha(Hd_\psi(b''))\ . $$
Similarly, in the term $i(\alpha)(b')\circ_i C'(\delta H(b''))$, all weights coming from the formula for $H$ are within $b'$, so $b''$ is the sum of all shuffle binary trees with the set of leaves $I$, and $C'(\delta H(b''))$ is nothing but $\theta^{(k)}_{|I|}$. In $b'$ the weights are not quite correct, however, when applying $i(\alpha)$ to $b'$, one uses the projection $H d_\psi$, which turns out to create the correct weights. Indeed, since each term in $b'$ is a tree monomial containing exactly one vertex labelled $\mu$, its image under $d_\psi$ is is a tree monomial of the same shape where all vertices are labelled $\beta$, and then the application of $H$ to that monomial creates correct weights. Thus $i(\alpha)(b')$ differs from $\nu_{|J|+1}$ by a scalar multiple, which is the sum of the $H$-weights of the vertices of~$b'$ computed for those vertices viewed as vertices of~$b$. For each tree monomial that sum is equal to $1-\frac{\binom{|I|}{2}}{\binom{n}{2}}$ since  the total sum of weights of all vertices of a given tree is equal to~$1$ and since we already noticed that for $b''$ the sum of weights is equal to~$\frac{\binom{|I|}{2}}{\binom{n}{2}}$. This completes the  proof. 
\end{proof}

\subsection{Givental action on homotopy hypercommutative algebras}\label{subsec:GiventalActionOnHomotopyHycom}

If in Theorem~\ref{thm:MainGiv=LinftyAction} we assume $A$ to be a chain complex with a non-zero differential, the result remains true for the modified statement 
$$\ell^\alpha_1(r(z))=[d_A,r(z)]+\widehat{r(z)}.\alpha\ . $$
In particular, the Givental formulae define an action if we restrict ourselves to the subalgebra of the Givental Lie algebra consisting of elements that commute with $d_A$. 
(This restriction makes sense, as only this way the Givental formulae are homotopically meaningful). A direct consequence of this theorem is that the Givental infinitesimal action on hypercommutative algebras extends naturally to  homotopy hypercommutative algebras. This suggests the following definition. 

\begin{definition}\label{def:DefinitionForHomotopyHypercom}
Let a homotopy hypercommutative algebra structure on a chain complex $A$ be encoded by a Maurer--Cartan element $\alpha\in\g_{\HyperCom}$, and let $r(z)$ be a degree $0$ element of~$\g_{\Delta}$ commuting with $d_A$. The \emph{higher infinitesimal Givental action} is the gauge symmetry action of $r(z)$ on $\alpha$:
$$\widehat{r(z)}.\alpha:=\ell^\alpha_1(r(z))\ . $$
\end{definition}

\section{Givental action as an action of trivialisations of the circle action}\label{sec:Givental=Triv}

In the previous sections, we have been able to describe the  infinitesimal Givental action as an infinitesimal gauge symmetry in the framework of homotopy Batalin--Vilkovisky algebras. We now integrate this infinitesimal action and interpret Givental group action as an action of trivialisations of the trivial circle action. 

\subsection{Homotopy BV-algebras with trivialisation of the circle action}
Up to homotopy, the action of the circle is modelled by the Koszul resolution $\Omega \, H^\bullet(S^1)^{\ac} = \big(T\left(s^{-1} \bar{T}^c(\delta)  \right), d\big)$ of the Koszul algebra $H^\bullet(S^1)$. Recall that modules over the dg algebra  $\Omega \, H^\bullet(S^1)^{\ac}$ are called multicomplexes.
There is a notion of morphism of multicomplexes which encodes their homotopy properties; these morphisms are called \emph{$\infty$-morphisms} and are actually made up of  collections of maps, see \cite{DotsenkoShadrinVallette12}. 
An important class of invertible $\infty$-morphisms is \emph{$\infty$-isotopies}; these are $\infty$-morphisms whose first map is the identity. 
A (homotopy) trivialisation of a circle action amounts to an $\infty$-isotopy from this structure to the trivial circle action, see \cite{DotsenkoShadrinVallette14}.

The dg Lie algebra $\g_\Delta=\Hom\left(\bar{T}^c(\delta), \End(A)\right)$, which models multicomplex structures on $A$, embeds naturally into the unital dg associative algebra 
$$\a_\Delta:=\left(\Hom\left({T}^c(\delta), \End(A)\right), \star\right)\ . $$ 
In this convolution algebra, we denote by $1$ and by $\partial$ the maps defined respectively by 
$$ 1 \ : \ 1 \mapsto \id_A,  \quad     \partial \ : \ 1 \mapsto \partial_A,\quad  \text{and}\quad \delta^k\mapsto 0, \ \ \text{for}\ \ k\ge 1\ . $$ 
(Since the convolution algebra  $\a_\Delta$ is weight graded connected, it admits exponential and logarithm maps, see \cite[Section~$3$]{DotsenkoShadrinVallette14} for more details.)
The data of a trivialisation of a circle action $\phi \in \MC(\g_\Delta)$ on $A$ amounts to a degree $0$ element 
$f\in \g_\Delta$ satisfying the following equation in the algebra $\mathfrak{a}_\Delta$:
$$(1+f)\star(\phi +\partial)=\partial \star (1+f)\ . $$
This data is equivalent to  a module structure over the quasi-free dg algebra 
$$\left(T\left(s^{-1} \bar{T}^c(\delta) \oplus  \bar{T}^c(\delta) \right), d_1+d'_2+d_2'' \right)\ ,$$ 
where the differential $d_1$ is the unique derivation which extends the desuspension map on the space of generators 
$s^{-1}\colon  \bar{T}^c(\delta) \to s^{-1}\bar{T}^c(\delta)$, where 
the differential $d'_2$ is the unique derivation which extends the coproduct map followed by the desuspension map  
$$ \bar{T}^c(\delta) \to \bar{T}^c(\delta)\otimes \bar{T}^c(\delta)
\xrightarrow{s^{-1}}  s^{-1}\bar{T}^c(\delta)\otimes \bar{T}^c(\delta)\ ,$$
 and where the differential $d_2''$ is the differential coming form the Koszul resolution
$\Omega \, H^\bullet(S^1)^{\ac}$.\\
 
Let us enlarge the picture one step further and consider now the data of homotopy Batalin--Vilkovisky algebras together with a (homotopy) trivialisation of the circle action. This data  is  encoded by the following quasi-free operad
$$\trBV_\infty:=%\sBV_\infty \vee_{\chi}\, T\big(\bar{T}^c(\delta)\big)=
\left( \TTT\big( H^{\bullet}(\mathcal{M}_{0,n+1})\oplus s^{-1}\bar{T}^c(\delta) \oplus \bar{T}^c(\delta)\big), 
d:=d_1+d_2+d_3+\cdots \right)\ ,
$$
where $d_1$ is the same kind of derivation as above, where $d_2$ is the sum of the derivations coming from $d_2'$, $d_2''$ and the quadratic part of the differentials of $\sBV_\infty$, and where $d_3, d_4, \ldots$ are the higher components of the differential of the operad $\sBV_\infty$.
Notice that the  operad $\trBV_\infty$ is actually the coproduct of the operad $\sBV_\infty$ with the algebra 
  $\big(T\left(s^{-1} \bar{T}^c(\delta) \oplus  \bar{T}^c(\delta) \right), d_1+d'_2+d_2'' \big)$ over the algebra $\Omega \, H^\bullet(S^1)^{\ac}$ .

Algebra structures over the operad $\trBV_\infty$ are encoded by the Maurer--Cartan elements of the convolution $L_\infty$-algebra 
$$\mathfrak{l}_{\trBV}:=\big(
\Hom_\Sy(H^{\bullet+1}(\mathcal{M}_{0,n+1})\oplus \bar{T}^c(\delta) \oplus s\, \bar{T}^c(\delta), \End_A),  (\partial_A)_*+(d_1)^*, \ell_2,  \ell_3, \ldots
\big)\ .$$
The underlying vector space of this $L_\infty$-algebra is the direct sum 
$$\mathfrak{l}_{\trBV} \cong  \g_{\HyperCom}\oplus \g_{\Delta}  \oplus s^{-1}\g_{\Delta}\   $$
of its Lie sub-algebras, where the Lie algebra structure on $s^{-1}\g_{\Delta}$ is abelian; 
as at the end of  Section~\ref{subsec:Linfty}, the $L_\infty$-algebra  structure is an extension in the category of $L_\infty$-algebras.

So its Maurer--Cartan elements  are sums of three terms $\alpha+\phi+\rho$, corresponding respectively to the (homotopy) hypercommutative part, the (homotopy) circle action and its (homotopy) trivialisation. Let us consider the infinitesimal gauge symmetry $\ell_1^{\alpha+\phi+\rho}(\lambda)$ associated to a degree $0$ element $\lambda$ of $\mathfrak{l}_{\trBV}$.

\begin{lemma}\label{lem:CVt=1}
For any degree $0$ element $\lambda$ of $\g_{\Delta}\subset \mathfrak{l}_{\trBV}$, the associated infinitesimal gauge symmetry integrates to time $t=1$, i.e.  the differential equation 
\begin{eqnarray*}
\gamma'(t)=\ell_1^{\gamma(t)}(\lambda), \qquad \gamma(0)=\alpha+\phi+\rho
\end{eqnarray*}
admits a solution which converges at time $t=1$. 
\end{lemma}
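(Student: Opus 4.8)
The plan is to show that the gauge-symmetry vector field associated to a degree~$0$ element $\lambda\in\g_\Delta$ is, in a suitable sense, \emph{locally nilpotent} along the relevant weight grading, so that the formal flow terminates and converges at $t=1$. Concretely, I would first record that $\gamma(t)=\alpha(t)+\phi(t)+\rho(t)$ stays, for all $t$, a sum of a hypercommutative part, a circle-action part, and a trivialisation part; since $\lambda\in\g_\Delta$ generates the gauge action studied in Section~\ref{sec:GaugeInterpretation}, the equation $\gamma'(t)=\ell_1^{\gamma(t)}(\lambda)$ deforms $\alpha(t)$ exactly by the higher infinitesimal Givental action of Definition~\ref{def:DefinitionForHomotopyHypercom}, while its effect on $\phi(t)$ and $\rho(t)$ is governed by the bracket on the abelian and non-abelian pieces of $\mathfrak{l}_{\trBV}$.

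\emph{Key steps, in order.} First I would fix the weight grading coming from the arity/number-of-vertices filtration on the homotopy cooperad $\mathcal H$ (together with the weight $[k]$ counting $\mu$-vertices used in the proof of Lemma~\ref{lem:THCom}), under which $\mathfrak{l}_{\trBV}$ is a product of its graded pieces. Second, I would observe that the bracket $\ell_1^{\gamma}(\lambda)=\sum_{n\ge 1}\frac{1}{(n-1)!}\ell_n(\gamma,\dots,\gamma,\lambda)$ \emph{strictly raises} this weight: each application of $\lambda\in\g_\Delta$ accumulates one power of $\delta$ (equivalently one power of $z$), exactly as in the left-comb computation of Section~\ref{subsec:EndProof}. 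Third, working in each fixed arity $n$, I would expand $\gamma(t)$ as a formal power series in $t$ and solve the ODE recursively; because the right-hand side on the arity-$n$, weight-$w$ component depends only on strictly lower-weight data in the \emph{same} arity (the operadic structure maps $\ell_n$ decrease the number of free inputs distributed to the arguments), the Picard iteration stabilises after finitely many steps in each fixed bidegree $(n,w)$. Finally, I would conclude that the formal solution $\gamma(t)=\sum_{j\ge 0}\gamma_j\, t^j$ has, in every fixed arity, only finitely many nonzero coefficients $\gamma_j$, so evaluation at $t=1$ is a finite sum and hence converges.

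\emph{Main obstacle.} The delicate point is establishing the finiteness/nilpotency claim rigorously: one must check that in each fixed arity $n$ the structure maps $\ell_n$ of $\mathfrak{l}_{\trBV}$ cannot indefinitely feed back into the same bidegree, i.e.\ that the flow genuinely exhausts a finite-dimensional slice rather than only converging $\delta$-adically. I expect the cleanest way to handle this is to use the description of $\mathfrak{l}_{\trBV}\cong\g_{\HyperCom}\oplus\g_\Delta\oplus s^{-1}\g_\Delta$ as an \emph{extension} in the category of $L_\infty$-algebras (as noted just before the lemma): since $\g_\Delta$ is an honest dg Lie subalgebra acting on the abelian nilpotent piece $s^{-1}\g_\Delta$ and on $\g_{\HyperCom}$ by the Givental/gauge action, the flow decomposes into a classical (finite, group-like) integration on the $\g_\Delta$-component together with a triangular, weight-increasing action on the remaining components. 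Controlling the signs and the combinatorial coefficients $\frac{\binom{|I|}{2}}{\binom{n}{2}}$ appearing in the recursion of Section~\ref{sec:GaugeInterpretation} throughout this iteration will be the most technical part, but no new conceptual difficulty arises beyond the bookkeeping already set up there.
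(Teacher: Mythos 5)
Your strategy coincides with the paper's: split $\gamma(t)=\alpha(t)+\phi(t)+\rho(t)$, integrate the $\g_\Delta$-direction classically, and use a weight grading to make the remaining flow triangular, so that each graded component of the solution depends polynomially on $t$. Two points, however, need repair before the argument is actually correct.

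First, your concluding claim --- that in every fixed arity the formal solution has only finitely many nonzero $t$-coefficients, so that evaluation at $t=1$ is a finite sum --- is false. The cooperad $\bar{T}^c(\delta)\oplus s\,\bar{T}^c(\delta)$ is concentrated in arity one, so all of $\g_\Delta\oplus s^{-1}\g_\Delta$ sits in a single arity, and there the flow is a genuine power series in $t$: the paper solves these two components in closed form,
$$\phi(t)=e^{-t\ad_{\lambda}}(\phi) -\frac{e^{-t\ad_{\lambda}}-\id}{t\ad_{\lambda}}(\partial_A \lambda)\ ,\qquad
\rho(t)=s^{-1}\left(e^{t\lambda}\star (1+s\, \rho) -1\right)\ ,$$
and neither expansion terminates in $t$. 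What is true --- and what you in fact assert in your third step --- is polynomiality in each fixed weight: $\phi_k(t)$, $\rho_k(t)$ and $\alpha_k(t)$ are polynomials in $t$ of degree at most $k$. Convergence at $t=1$ then follows because the ambient space is a direct product of its weight components, not because any finite sum appears in a fixed arity; the per-arity statement must be replaced by the per-weight one in your conclusion.

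Second, the grading you propose on the $\g_{\HyperCom}$-part, namely the $\mu$-vertex count $[k]$ used in Lemma~\ref{lem:THCom}, is not the grading preserved by the transferred homotopy cooperad structure on $\mathcal{H}$. The paper instead grades ${\Im\, Hd_\psi}$ by the number of $\beta$-labelled vertices and extends this by the $\delta$-power on $\bar{T}^c(\delta)$; this total weight is preserved by the structure maps (both $d_\varphi=\delta^{-1}d_\psi$ and the homotopy $\delta H$ preserve it), and, since $\lambda$ is supported in strictly positive $\delta$-weight, this invariance is exactly what makes the recursion triangular: $\alpha_k'(t)$ involves only terms $\ell_m\big(\alpha_{k_1}(t),\ldots,\alpha_{k_i}(t),\phi_{l_1}(t),\ldots,\phi_{l_{j-1}}(t),\lambda_{l_j}\big)$ of total weight $k_1+\cdots+k_i+l_1+\cdots+l_{j-1}<k$. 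Note also that this dependence is not ``within the same arity'', as you write: the structure maps of $\mathcal{H}$ mix arities, so the induction must run over the weight alone, with all arities treated simultaneously. Both defects are repairable, but as written the finiteness mechanism on which your proof rests is not established.
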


\begin{proof}
An element $\lambda \in  \mathfrak{l}_{\trBV}$ lies in $\g_{\Delta}$ if and only if it vanishes on 
$H^{\bullet+1}(\mathcal{M}_{0,n+1})\oplus  s\, \bar{T}^c(\delta)$.
Let us decompose $\gamma(t)$ into $\alpha(t)+\phi(t)+\rho(t)$. The differential equation 
\begin{eqnarray*}
\gamma'(t)=\alpha'(t)+\phi'(t)+\rho'(t)&=& \ell_1^{\gamma(t)}(\lambda)
=
\sum_{n\ge1}\frac{1}{(n-1)!}\, \ell_n\big(\gamma(t), \ldots, \gamma(t), \lambda\big)\\
&=&
\sum_{n\ge1}\frac{1}{(n-1)!}\, \ell_n\big(\alpha(t)+\phi(t)+\rho(t), \ldots, \alpha(t)+\phi(t)+\rho(t), \lambda\big) \ ,
\end{eqnarray*}
decomposes as follows. 
Since $\bar{T}^c(\delta)$ is a sub-cooperad of the homotopy cooperad  on the generators of the quasi-free operad $\trBV_\infty$, 
we conclude that 
 $\phi(t)$ satisfies the classical differential equation \cite{GoldmanMillson88}
$$\phi'(t)=\ell_1(\lambda)+\ell_2(\phi(t), \lambda)=\partial_A \lambda + [\phi(t), \lambda   ]\ , \qquad \phi(0)=\phi $$ 
in $\g_\Delta$. Therefore  $\phi(t)$ is equal to 
$$\phi(t)=e^{-t\ad_{\lambda}}(\phi) -\frac{e^{-t\ad_{\lambda}}-\id}{t\ad_{\lambda}}(\partial_A \lambda)\ .$$
Recall that the dg Lie algebra $\g_\Delta$ is weight graded and that the elements $\phi$ and $\lambda$ are concentrated in positive weight, so, applied to an element $\delta^k$ of $\bar{T}^c(\delta)$, the element $\phi(t)$ becomes a  finite sum. Let us denote by 
$\phi_k(t):=\phi(t)(\delta^k)$, the component of $\phi(t)$ on $\delta^k$. One can see  that 
$\phi_k(t)$ is a polynomial in $t$ of degree at most $k$. This shows that $\phi(t)$ is well-defined in $\g_\Delta$ for any $t$, in particular  at $t=1$. 

The map $\chi$ provides $s^{-1}\g_{\Delta}$ with a left module structure, denoted $\bar\star$,  of the convolution unital associative algebra
$\a_\Delta$. With respect to this structure, the differential equation satisfied by $\rho(t)$ is 
$$\rho'(t)=\lambda \, \bar\star\,  \rho(t) + s^{-1} \lambda\ ,  \qquad \rho(0)=\rho\ .$$
Therefore, $\rho(t)$ is equal to 
$$\rho(t)=s^{-1}\left(  
e^{t\lambda}\star (1+s\, \rho) -1
\right)\ ,$$
where the  exponential  $e^{t\lambda}=1 + t\lambda+\frac12 t^2\lambda \star \lambda +\cdots$ lives in the algebra $\a_\Delta$. By the same weight grading argument as above, $\rho(t)$ is well-defined for any $t$, in particular at $t=1$.  

The map $\alpha(t)$ is the component of $\gamma(t)$ on $H^{\bullet+1}(\mathcal{M}_{0,n+1})=\Im\, H d_\psi$. 
Let us consider the grading on $\Im\, H d_\psi$ defined by the number of vertices labelled by $\beta$. We denote it by 
${\Im\, H d_\psi}^{<k>}$ and we denote by $\alpha_k(t)$ the restriction of $\alpha(t)$ on it. 
Finally, we consider the weight grading defined on 
$$\mathcal{H}= H^{\bullet+1}(\mathcal{M}_{0,n+1})\oplus  \bar{T}^c(\delta) $$
by  this grading on the left-hand side and by the $\delta$-grading on the right-hand side. The formula for the homotopy transfer theorem of homotopy cooperads \cite[Theorem~$3.3$]{DrummondColeVallette11} shows that the homotopy cooperad structure on $\mathcal{H}$ preserves this weight grading. 

We are now ready to prove, by induction on $k$, that $\alpha_k(t)$ is a polynomial in $t$ of degree at most $k$. 
First notice that $\alpha(t)$ satisfies the differential equation 
\begin{eqnarray*}
\alpha'(t)
=
\frac12  \ell_3\big(\alpha(t)+\phi(t), \alpha(t)+\phi(t), \lambda\big) + 
\frac16  \ell_4\big(\alpha(t)+\phi(t), \alpha(t)+\phi(t), \alpha(t)+\phi(t), \lambda\big) + \cdots \ ,
\end{eqnarray*}
by the shape of the homotopy cooperad structure on $\mathcal{H}$. For instance, the  term $\ell_2$ of the differential equation comes from the cooperad structure on $H^{\bullet+1}(\mathcal{M}_{0,n+1})$; but this cooperad structure produces no $\delta$ term, so $\ell_2(\alpha(t)+\phi(t), \lambda)\allowbreak=0$ because  the map $\lambda$ vanishes on $H^{\bullet+1}(\mathcal{M}_{0,n+1})$. 
So, for $k=0$, we have 
$\alpha_0'(t)=0$, which shows that $\alpha_0(t)=\alpha_0$ is constant. 
Using the above weight grading of the homotopy cooperad $\mathcal{H}$, one can see 
that the image of ${\Im\, H d_\psi}^{<k>}$ under the homotopy cooperad structure map $\Delta_m$ lives in  
trees with $m$ vertices labelled respectively by 
$$ 
{\Im\, H d_\psi}^{<k_1>}, \ \ldots,\  {\Im\, H d_\psi}^{<k_i>}, \ \KK\, \delta^{l_1},\  \ldots, \ \KK\,  \delta^{l_j}\ , 
$$
with $i+j=m$ and $k_1+\cdots+k_i+l_1+\cdots+l_j=k$. 
Therefore, $\alpha'_k(t)$ is equal to a linear combination of terms of the form 
$$\ell_m\big(
\alpha_{k_1}(t), \ldots, \alpha_{k_i}(t), \phi_{l_1}(t), \ldots, \phi_{l_{j-1}}(t), \lambda_{l_j}
\big) \ , $$
which are polynomial in $t$ of degree at most $k_1+\cdots+k_i+l_1+\cdots+l_{j-1}<k$, by the induction hypothesis. 
This proves that $\alpha_k(t)$ is polynomial in $t$ of degree at most $k$. So, $\alpha(t)$ exists for any $t$, in particular $t=1$, which concludes the proof. 
\end{proof}

\subsection{Givental morphism}
Given a Maurer--Cartan element $\alpha+\phi+\rho$ in $\mathfrak{l}_{\trBV}$, we consider the degree $0$ element 
$$\lambda:=-\ln (1+s\, \rho)$$
defined in $\a_\Delta$, where $s\, \rho$ is the composite 
$$\bar{T}^c(\delta) \stackrel{s}{\longrightarrow} s\, \bar{T}^c(\delta) \stackrel{\rho}{\longrightarrow} \End(A)\ .$$
The element $\lambda$ actually lives inside $\g_\Delta$, and we extend it trivially to an element in  
$\mathfrak{l}_{\trBV}$.
By Lemma~\ref{lem:CVt=1}, one can deform $\alpha+\phi+\rho$ in the direction of $\lambda$ up to time $t=1$; 
let us denote the Maurer--Cartan element obtained at time $t=1$ by $e^\lambda. (\alpha  + \phi+\rho)$.

\begin{proposition}\label{prop:MCHypercom}
The Maurer--Cartan element $e^\lambda. (\alpha + \phi+\rho)$  vanishes on $\bar{T}^c(\delta)\oplus s\, \bar{T}^c(\delta)$. Its component on $H^{\bullet+1}(\mathcal{M}_{0,n+1})$ provides us with a  homotopy hypercommutative algebra. 
\end{proposition}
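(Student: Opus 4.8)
The plan is to evaluate the gauge trajectory $\gamma(t)$ of Lemma~\ref{lem:CVt=1} at time $t=1$ componentwise, using the closed-form solutions obtained there, and then to recognise the surviving component as a Maurer--Cartan element of $\g_{\HyperCom}$. Write $\gamma(1) = e^\lambda.(\alpha+\phi+\rho) = \alpha(1)+\phi(1)+\rho(1)$, where $\alpha(1)$, $\phi(1)$ and $\rho(1)$ are the components on $H^{\bullet+1}(\mathcal{M}_{0,n+1})$, $\bar{T}^c(\delta)$ and $s\,\bar{T}^c(\delta)$ respectively. The two assertions of the proposition then amount to $\phi(1)=0$, $\rho(1)=0$, and $\alpha(1)\in\MC(\g_{\HyperCom})$.

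First I would dispose of the $s\,\bar{T}^c(\delta)$-component. Lemma~\ref{lem:CVt=1} gives the closed form $\rho(t)=s^{-1}\big(e^{t\lambda}\star(1+s\,\rho)-1\big)$ inside the unital associative algebra $\a_\Delta$. By the very definition $\lambda=-\ln(1+s\,\rho)$ we have $e^{\lambda}=(1+s\,\rho)^{-1}$ in $\a_\Delta$, whence $\rho(1)=s^{-1}\big((1+s\,\rho)^{-1}\star(1+s\,\rho)-1\big)=0$. This vanishing is purely formal and uses nothing beyond the choice of $\lambda$.

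Next I would treat the $\bar{T}^c(\delta)$-component. Evaluating the gauge-flow solution for $\phi(t)$ from Lemma~\ref{lem:CVt=1} at $t=1$ and reorganising it inside $\a_\Delta$, where the differential $\partial_A$ is implemented by the graded commutator with the square-zero element $\partial$, one obtains the standard identity ``gauge action $=$ conjugation'': $\phi(1)+\partial = e^{-\lambda}\star(\phi+\partial)\star e^{\lambda} = (1+s\,\rho)\star(\phi+\partial)\star(1+s\,\rho)^{-1}$, using $e^{-\lambda}=1+s\,\rho$. Now the Maurer--Cartan property of $\alpha+\phi+\rho$ enters: since $\trBV_\infty$ encodes a trivialisation of the circle action, the datum $f=s\,\rho$ satisfies the trivialisation equation $(1+f)\star(\phi+\partial)=\partial\star(1+f)$, which is exactly $(1+s\,\rho)\star(\phi+\partial)\star(1+s\,\rho)^{-1}=\partial$. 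Substituting gives $\phi(1)+\partial=\partial$, i.e. $\phi(1)=0$. Together with the previous step, this shows that $e^\lambda.(\alpha+\phi+\rho)$ vanishes on $\bar{T}^c(\delta)\oplus s\,\bar{T}^c(\delta)$.

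It remains to identify $\alpha(1)$. A gauge symmetry preserves the Maurer--Cartan variety, so $\gamma(1)$ is again a Maurer--Cartan element of $\mathfrak{l}_{\trBV}$; by the previous two steps it equals its $H^{\bullet+1}(\mathcal{M}_{0,n+1})$-component $\alpha(1)$, so $\gamma(1)=\alpha(1)\in\g_{\HyperCom}$. Because $\g_{\HyperCom}$ is the sub-$L_\infty$-algebra of $\mathfrak{l}_{\trBV}$ dual to the quotient (homotopy) cooperad $H^{\bullet+1}(\mathcal{M}_{0,n+1})$ --- equivalently, the kernel ideal of the extension of $L_\infty$-algebras $\mathfrak{l}_{\trBV}\cong\g_{\HyperCom}\oplus\g_\Delta\oplus s^{-1}\g_\Delta$ --- it is closed under all the brackets $\ell_n$; hence the Maurer--Cartan equation of $\gamma(1)$ computed in $\mathfrak{l}_{\trBV}$ coincides with the Maurer--Cartan equation of $\alpha(1)$ in $\g_{\HyperCom}$. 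Therefore $\alpha(1)\in\MC(\g_{\HyperCom})$ is a homotopy hypercommutative algebra. I expect the main obstacle to be the passage between the $L_\infty$-theoretic gauge-flow solutions and the associative-algebra picture of $\a_\Delta$, in particular verifying the conjugation identity for $\phi(1)$ with the correct signs and confirming that the Maurer--Cartan condition on $\alpha+\phi+\rho$ delivers precisely the trivialisation equation $(1+s\,\rho)\star(\phi+\partial)=\partial\star(1+s\,\rho)$ rather than some homotopy-corrected variant.
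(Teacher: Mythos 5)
Your proposal is correct and follows essentially the same route as the paper: the same closed-form solutions from Lemma~\ref{lem:CVt=1}, the identity $e^{-\lambda}=1+s\,\rho$ giving $\rho(1)=0$, the conjugation formula $\phi(1)+\partial=(1+s\,\rho)\star(\phi+\partial)\star(1+s\,\rho)^{-1}$ combined with the trivialisation equation giving $\phi(1)=0$. Your final step, phrased via the sub-$L_\infty$-algebra structure of $\g_{\HyperCom}\subset\mathfrak{l}_{\trBV}$ and gauge-invariance of the Maurer--Cartan variety, is just a more explicit rendering of the paper's remark that the second claim ``follows from the explicit shape of the differential of the operad $\trBV_\infty$'', and the ``homotopy-corrected variant'' worry you flag is resolved by the very construction of $\trBV_\infty$, whose generators and differential encode the strict trivialisation equation.
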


\begin{proof}
Using the notations of the previous proof, we just need to prove that $\rho(1)=0$ and that $\phi(1)=0$. 
The solution $\rho(1)=s^{-1}\left(e^\lambda\star(1+s\, \rho)-1\right)$ gives, with $\lambda=-\ln(1+s\, \rho)$, the following result 
$$\rho(1)=s^{-1}\left(
(1+s\, \rho)^{-1}\star (1+s\, \rho)-1
\right)=0\ . $$
Let us denote by $\partial$ the element in $\a_\Delta$ which sends $1$ to $\partial_A$ and the rest to $0$. Then, in the convolution associative algebra $\a_\Delta$, one has 
$$\phi(1)+\partial= 
e^{-\lambda} \star (\phi +\partial) \star e^\lambda = (1+s\, \rho)\star (\phi + \partial) \star (1+s\, \rho)^{-1}
\ , $$
see \cite[Section~$2$]{DotsenkoShadrinVallette14} for more details.
Since $r$ is a homotopy trivialisation of $\phi$,  the right-hand side is equal to $\partial$, see \cite[Section~$6$]{DotsenkoShadrinVallette14}, which proves the first statement. 

The second claim follows from the explicit shape of the differential of the operad $\trBV_\infty$. 
\end{proof}

Let us now interpret this result in a functorial operadic way. All the previous arguments and computations hold true, in the same way, in the convolution $L_\infty$-algebra 
$$ \big(
\Hom_\Sy(H^{\bullet+1}(\mathcal{M}_{0,n+1})\oplus \bar{T}^c(\delta) \oplus s\, \bar{T}^c(\delta), \trBV_\infty),  d_*+(d_1)^*, \ell_2,  \ell_3, \ldots
\big)\ .$$
The identity endomorphism of the operad $\trBV_\infty$ corresponds to a Maurer--Cartan element that we denote by $\alpha+\phi+\rho$, as above. With $\lambda=-\ln (1+s\, \rho)$, the new Maurer--Cartan $e^\lambda. (\alpha + \phi+\rho)$ obtained under the gauge action corresponds to a new endomorphism of the dg operad $\trBV_\infty$. Proposition~\ref{prop:MCHypercom} shows that it  vanishes on the components $\bar{T}^c(\delta) \oplus s\, \bar{T}^c(\delta)$. So it actually defines a morphism of dg operads 
$$G\ : \ \HyperCom_\infty \to \trBV_\infty\ ,$$ 
which we call the \emph{Givental morphism}. 

\begin{proposition}
 The Givental morphism is a quasi-isomorphism of dg operads.
\end{proposition}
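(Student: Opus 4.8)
The plan is to show that the Givental morphism $G\colon \HyperCom_\infty \to \trBV_\infty$ induces an isomorphism on homology. Since both operads are cofibrant (quasi-free) resolutions, it suffices to compare them with a common target. The key observation is that $\trBV_\infty$ is, by its very construction as described above, a resolution of the operad $\BV$, and the extra generators $s^{-1}\bar{T}^c(\delta)$ and $\bar{T}^c(\delta)$ encoding the (homotopy) circle action together with its trivialisation form an acyclic piece. First I would record that the sub-dg-operad generated by $s^{-1}\bar{T}^c(\delta) \oplus \bar{T}^c(\delta)$ with the differential $d_1 + d_2' + d_2''$ is precisely the Koszul-type resolution of the data ``circle action together with a trivialisation'', and that trivialising the circle action is a contractible datum: the operad $\big(T(s^{-1}\bar{T}^c(\delta)\oplus\bar{T}^c(\delta)), d_1+d_2'+d_2''\big)$ is quasi-isomorphic to the trivial operad $\KK$ (equivalently, $\Omega H^\bullet(S^1)^{\ac}$ with a trivialisation added is acyclic in positive arity-one degree).

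Next I would set up a filtration or weight-grading argument. Using the weight grading coming from the number of generators, the differential $d$ of $\trBV_\infty$ decomposes, and one can filter so that the associated graded picks out the ``acyclic part'' coming from $d_1$ acting on $s^{-1}\bar{T}^c(\delta)\oplus\bar{T}^c(\delta)$. The map $d_1$ is the unique derivation extending the desuspension $s^{-1}\colon \bar{T}^c(\delta)\to s^{-1}\bar{T}^c(\delta)$; this sets up an acyclic two-step complex on the generators that trivialises the circle-action generators against the trivialisation generators. Computing the homology of the associated graded should therefore collapse the contribution of these generators entirely, leaving only the part generated by $H^{\bullet+1}(\mathcal{M}_{0,n+1})$, which is exactly $\HyperCom_\infty = \Omega H^{\bullet+1}(\mathcal{M}_{0,n+1})$. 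A spectral sequence (or the Künneth/comparison theorem for the coproduct of operads over $\Omega H^\bullet(S^1)^{\ac}$, noting the explicit coproduct description in the excerpt) then identifies $H_\bullet(\trBV_\infty)$ with $\HyperCom$.

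Having established that both $\HyperCom_\infty$ and $\trBV_\infty$ have homology equal to $\HyperCom$, the final step is to check that $G$ realises this identification, i.e. that $H_\bullet(G)$ is the identity on $\HyperCom$. This follows from the construction: by Proposition~\ref{prop:MCHypercom}, the Maurer--Cartan element $e^\lambda.(\alpha+\phi+\rho)$ vanishes on $\bar{T}^c(\delta)\oplus s\,\bar{T}^c(\delta)$, so $G$ restricted to the cohomology class of $H^{\bullet+1}(\mathcal{M}_{0,n+1})$-generators recovers the canonical homotopy hypercommutative structure underlying the identity of $\trBV_\infty$. Concretely, the gauge action integrating $\lambda$ is an $\infty$-isotopy, hence induces the identity at the level of the first-order term, so on the relevant homology it is the canonical inclusion followed by the projection onto the hypercommutative part.

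The main obstacle I expect is the bookkeeping in the spectral sequence comparison: one must verify that the filtration by the circle-action and trivialisation generators is compatible with the full differential $d = d_1+d_2+d_3+\cdots$, and that the higher components $d_3, d_4, \ldots$ (inherited from $\sBV_\infty$) do not obstruct the collapse. Equivalently, the delicate point is confirming that the acyclicity of the trivialisation datum survives in the convolution with the hypercommutative part -- that is, that the coproduct of $\sBV_\infty$ with the acyclic trivialisation algebra over $\Omega H^\bullet(S^1)^{\ac}$ behaves homologically as expected. This is where the explicit coproduct description quoted above, together with the fact that $\sBV_\infty$ is already known to be a resolution of $\BV$ (Theorem~\ref{thm:sBVinfinity}), does the essential work.
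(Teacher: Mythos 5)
Your overall architecture matches the paper's: filter $\trBV_\infty$ by the number of vertices, so that the associated-graded differential is $d_1$, which cancels $\bar{T}^c(\delta)$ against $s^{-1}\bar{T}^c(\delta)$ and leaves $\TTT\big(H^{\bullet}(\mathcal{M}_{0,n+1})\big)$; the next page is then $\Omega \HyperCom^{\ac}$ and the spectral sequence degenerates to $\HyperCom$. This is exactly the paper's computation of $H_\bullet(\trBV_\infty)$ (note only that convergence requires the observation that the filtration is bounded below at fixed arity and homological degree). One caution on your alternative route: a K\"unneth-type statement for the coproduct of $\sBV_\infty$ with the trivialisation algebra over $\Omega H^\bullet(S^1)^{\ac}$ is not automatic -- homotopy invariance of such pushouts of dg operads needs cofibrancy hypotheses -- and the paper deliberately avoids it by filtering $\trBV_\infty$ directly.

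The genuine gap is in your final step. Proposition~\ref{prop:MCHypercom} only says that the gauged Maurer--Cartan element vanishes on $\bar{T}^c(\delta)\oplus s\,\bar{T}^c(\delta)$, i.e.\ that $G$ is well defined; it says nothing about what $G$ does to the $H^{\bullet+1}(\mathcal{M}_{0,n+1})$-generators, and ``the gauge action is an $\infty$-isotopy, hence induces the identity at first order'' is an intuition, not an argument. The paper extracts the needed statement by integrating the differential equation of Lemma~\ref{lem:CVt=1}: on generators, $G=\id+\zeta$ with $\zeta$ valued in trees containing at least one vertex labelled by some $\delta^k$. This yields the strict identity $\pi\circ G=\id$, where $\pi\colon \trBV_\infty \epi \HyperCom_\infty$ is the projection, hence $H_\bullet(G)$ is a monomorphism; and then -- a step entirely absent from your sketch -- one uses that each $\HyperCom(n)_k$ is finite dimensional to promote a monomorphism between two copies of $\HyperCom$ to an isomorphism (an injective endomorphism of an infinite-dimensional space need not be onto). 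Alternatively, your claim that $H_\bullet(G)$ ``is the identity'' could be made rigorous by a comparison of spectral sequences, but that again requires precisely the statement about $\zeta$: since $\delta^k$ has arity one, every tree in the image of $\zeta$ has at least two vertices, so $\zeta$ strictly lowers the filtration and $\mathrm{gr}(G)$ is the canonical inclusion of $\TTT\big(H^{\bullet}(\mathcal{M}_{0,n+1})\big)$ into the associated graded of $\trBV_\infty$. Without establishing the shape of $G$ on generators, neither version of the concluding argument gets off the ground.
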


\begin{proof}
Integrating 
the formula 
\begin{eqnarray*}
\alpha'(t)
=
\frac12  \ell_3\big(\alpha(t)+\phi(t), \alpha(t)+\phi(t), \lambda\big) + 
\frac16  \ell_4\big(\alpha(t)+\phi(t), \alpha(t)+\phi(t), \alpha(t)+\phi(t), \lambda\big) + \cdots \ ,
\end{eqnarray*}
from the proof of Lemma~\ref{lem:CVt=1} shows that the restriction of the Givental morphism 
on the space of generators has the following shape 
$$
H^{\bullet}(\mathcal{M}_{0,n+1}) \xrightarrow{\id + \zeta} H^{\bullet}(\mathcal{M}_{0,n+1}) \oplus 
\TTT\big( H^{\bullet}(\mathcal{M}_{0,n+1})\oplus s^{-1}\bar{T}^c(\delta) \oplus \underbrace{\bar{T}^c(\delta)}_{\geqslant 1}\big) \ , $$
where the image of  the map $\zeta$ lives in the linear span of trees with at least one vertex labelled by a $\delta^k$. 
Therefore the composite of the Givental morphism $G$ with the projection onto $\HyperCom_\infty$ is equal to the identity:
$$\id : \HyperCom_\infty \xrightarrow{G} \trBV_\infty \epi  \HyperCom_\infty\ . $$

Let us now prove that the homology of the dg operad $\trBV_\infty$ is isomorphic to the operad $\HyperCom$. 
We consider the following filtration of the dg operad $\trBV_\infty$: 
$$F_n:= \TTT\big( H^{\bullet}(\mathcal{M}_{0,n+1})\oplus s^{-1}\bar{T}^c(\delta) \oplus \bar{T}^c(\delta)\big)^{(\geqslant -n)}$$
made up of linear combination of trees with at least $-n$ vertices. Since the differential $d_1+\allowbreak d_2'+\allowbreak d_2+\allowbreak d_3+\cdots$ amounts to splitting the generators, it preserves this filtration.
This filtration is exhaustive and bounded below at fixed arity $n\ge 1$: at fixed homological degree $k\ge 0$, there exists an $N$ for which $F_N(\trBV_\infty(n)_k)=0$. So the associated spectral sequence converges to the homology of the dg operad $\trBV_\infty$. The differential of the first page of the spectral sequence is equal to $d_1$. The associated homology is isomorphic to $\TTT\big( H^{\bullet}(\mathcal{M}_{0,n+1})\big)$. So the second page of the spectral sequence is isomorphic to $\Omega \HyperCom^{\ac}$,which is the Koszul resolution of $\HyperCom$. Finally, the spectral sequence collapses at the third page, which is isomorphic to $E^2_{-1, \bullet}\cong\HyperCom$. 

On the homology level, the Givental morphism sits inside the following retract 
$$\id : \HyperCom \xrightarrow{H_\bullet(G)} \HyperCom \epi  \HyperCom\ .$$
So the  morphism $H_\bullet(G)$ is a monomorphism. At fixed arity $n\ge 1$ and homological degree $k\ge 0$, the component $\HyperCom(n)_k$ of the operad 
$\HyperCom$ is finite dimensional, which concludes that the morphism $H_\bullet(G)$ is an isomorphism.
\end{proof}

This proposition lifts the main result of \cite{KhoroshkinMarkarianShadrin13} on the cofibrant resolution level. 

\subsection{Functorial  Givental action}
The shape of the differential of the minimal model $\sBV_\infty$ of the operad $\BV$ shows that  the projection of its space of  generators onto its first summand provides us with another morphism of dg operads: 
$$\sBV_\infty \to \HyperCom_\infty \ , $$
which induces a morphism of dg operads 
$$\trBV_\infty%=\sBV_\infty \vee_{\chi}\, T(\bar{T}^c(\delta)) 
 \to \HyperCom_\infty\vee\, T(\bar{T}^c(\delta)) \ . $$
Algebras over the coproduct operad $\HyperCom_\infty\vee\, T(\bar{T}^c(\delta))$  are made up of a homotopy hypercommutative algebra $\alpha$ and a trivialisation $\rho$ of the trivial circle action.
The above morphism of dg operads amounts just to considering such a datum as a homotopy BV-algebra with a trivial higher circle action and a homotopy trivialisation of it.

We consider the composite of these two dg operad maps
$$ \xymatrix{
\widetilde{G} : \HyperCom_\infty  \ar[r]  
 & \trBV_\infty  \ar[r]   &    \HyperCom_\infty\vee\, T(\bar{T}^c(\delta)) }\ ,
$$
and we define the endomorphism $\mathcal G$ of the dg operad $\HyperCom_\infty\vee\, T(\bar{T}^c(\delta))$ by the following coproduct diagram 
$$\xymatrix{  \HyperCom_\infty \ar[r]^(0.42){\widetilde{G}} \ar[d]& \HyperCom_\infty\vee\, T(\bar{T}^c(\delta)) \\
\HyperCom_\infty\vee\, T(\bar{T}^c(\delta)) \ar@{..>}[ur]^(0.47){\mathcal G}   &  \ar[u]  \ar[l] T(\bar{T}^c(\delta)) \ . } $$

\begin{proposition}
The endomorphism $\mathcal G$ is  a non-trivial automorphism of dg operad: 
$$\mathcal G \in \mathrm{Aut}\big( \HyperCom_\infty\vee\, T(\bar{T}^c(\delta))\big)\ . $$
\end{proposition}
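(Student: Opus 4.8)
The plan is to exploit that $\HyperCom_\infty\vee\, T(\bar{T}^c(\delta))$ is a quasi-free dg operad, free as a graded operad on the generators $E:=H^{\bullet}(\mathcal{M}_{0,n+1})\oplus \bar{T}^c(\delta)$. A morphism out of a free operad is determined by its restriction to $E$, and by the coproduct diagram defining $\mathcal{G}$ this restriction is explicit: on the factor $\bar{T}^c(\delta)$ the map $\mathcal{G}$ is the canonical inclusion (the identity on generators), while on $H^{\bullet}(\mathcal{M}_{0,n+1})$ it is the restriction of $\widetilde{G}$. First I would compose the shape $\id+\zeta$ of the Givental morphism $G$ established above with the projection $\trBV_\infty\to\HyperCom_\infty\vee\, T(\bar{T}^c(\delta))$ that kills the generators $s^{-1}\bar{T}^c(\delta)$, recording that on generators $\mathcal{G}=\id+\bar\zeta$, where the image of $\bar\zeta$ lies in the span of tree monomials containing at least one vertex labelled by an element of $\bar{T}^c(\delta)$.

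The key device is the grading of $\HyperCom_\infty\vee\, T(\bar{T}^c(\delta))$ by \emph{$\delta$-weight}, that is, by the number of vertices decorated by an element of the $T(\bar{T}^c(\delta))$-factor. This grading is compatible with operadic composition. Since $\mathcal{G}$ preserves $\delta$-weight on the $\bar{T}^c(\delta)$-generators and strictly raises it on the $H^{\bullet}(\mathcal{M}_{0,n+1})$-generators, it follows that on an arbitrary tree monomial $T$ one has $\mathcal{G}(T)=T+(\text{terms of strictly higher }\delta\text{-weight})$. In other words $\mathcal{G}=\id+N$, where $N:=\mathcal{G}-\id$ strictly increases $\delta$-weight.

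To conclude invertibility I would show that $N$ is locally nilpotent. Fix an arity $n$ and a homological degree $d$. A reduced tree with $n$ leaves has at most $n-1$ vertices of arity $\geq 2$; these carry classes from the finite-dimensional groups $H^{\bullet}(\mathcal{M}_{0,m+1})$, so their total degree contribution is bounded. Each of the remaining (arity one) vertices is labelled by some $\delta^k$ of degree $2k\geq 2$, hence contributes strictly positive degree. Therefore, at fixed $(n,d)$, the $\delta$-weight is bounded above, so $N$ is nilpotent on each component $\big(\HyperCom_\infty\vee\, T(\bar{T}^c(\delta))\big)(n)_d$. Consequently $\mathcal{G}=\id+N$ is unipotent there, with two-sided inverse $\sum_{m\geq 0}(-N)^m$ given by a finite sum on each component. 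Being a bijective morphism of dg operads, $\mathcal{G}$ has an inverse that is again an operad morphism commuting with the differential, so $\mathcal{G}$ is an automorphism of dg operads. Non-triviality reduces to $\bar\zeta\neq 0$, which is precisely the statement that the Givental action genuinely mixes in $\delta$-contributions; this is already witnessed by the linear-in-$\delta$ term produced by the recursion for the infinitesimal action in Section~\ref{sec:GaugeInterpretation}.

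The main obstacle I anticipate is the bookkeeping of the two structural inputs: verifying that $\bar\zeta$ strictly raises $\delta$-weight (tracing it back to the shape of $G$ and the effect of the projection killing $s^{-1}\bar{T}^c(\delta)$), and confirming that the $\delta$-weight is bounded at fixed arity and degree so that $N$ is locally nilpotent. Once this unitriangularity with respect to the $\delta$-weight grading is in place, the passage to the inverse and hence the automorphism property is formal.
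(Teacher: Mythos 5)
Your proof is correct, but it takes a different route from the paper's. The paper argues as follows: from the previous proposition, the composite of $G$ with the projection onto $\HyperCom_\infty$ is the identity, so $G$ is a (split) monomorphism; it then asserts that $\widetilde{G}$, and hence $\mathcal G$, are monomorphisms, and concludes by observing that each component of $\HyperCom_\infty\vee\, T(\bar{T}^c(\delta))$ at fixed arity and homological degree is finite dimensional, so an injective endomorphism of it is bijective. Your argument instead establishes unitriangularity: writing $\mathcal G=\id+N$ with $N$ strictly raising the $\delta$-weight (the number of vertices labelled by $\bar{T}^c(\delta)$), and showing $N$ is locally nilpotent because each $\delta^k$-vertex contributes degree $2k\geq 2$ while the $\HyperCom_\infty$-generators contribute non-negative degree, so the $\delta$-weight is bounded on each component; the inverse is then the finite sum $\sum_{m\geq 0}(-N)^m$. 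Both proofs feed on the same two inputs -- the shape $\id+\zeta$ of $G$ on generators, and the positivity of the degree of $\delta$ -- but they use them differently, and your version buys two things. First, it produces an explicit inverse and only needs boundedness of the $\delta$-weight, not finite dimensionality of the components. Second, it repairs the one genuinely terse step in the paper: injectivity of $\widetilde{G}$ on the cofactor $\HyperCom_\infty$ does not formally imply injectivity of $\mathcal G=\widetilde{G}\vee\iota$ on the whole coproduct (a coproduct of monomorphisms need not be a monomorphism), and the cleanest way to justify that step is precisely the unitriangularity with respect to $\delta$-weight that you make explicit. One small caveat: your phrase that $\mathcal G$ ``strictly raises'' the $\delta$-weight on the $H^{\bullet}(\mathcal{M}_{0,n+1})$-generators should read that $\mathcal G-\id$ does, which is what your subsequent computation actually uses; and for non-triviality, $\bar\zeta\neq 0$ is indeed the right reduction, witnessed by the non-vanishing of the first-order term of the integrated gauge flow.
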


\begin{proof}
The arguments of the previous proof show that the Givental morphism is a monomorphism, so is the morphism $\widetilde{G}$ and then the morphism $\mathcal G$. At fixed arity $n\ge 1$ and homological degree $k\ge 0$, the component of the operad 
$\HyperCom_\infty\vee\, T(\bar{T}^c(\delta))$ is finite dimensional, which concludes the proof.
\end{proof}

 Recall that the data of a homotopy trivialisation $\rho$ of the trivial circle action  is equivalent to a series $R(z)\in 1+z\End(A)[[z]]$ commuting with the differential $d_A$. Such elements are precisely exponentials of elements $r(z)$ that can be used to define the infinitesimal Givental action on homotopy hypercommutative algebras, see Definition~\ref{def:DefinitionForHomotopyHypercom}. Pulling back such a data $(\alpha, \rho)$ by the above operad map produces a new homotopy hypercommutative algebra. When starting from a strict hypercommutative algebra, the result is nothing but the hypercommutative algebra obtained via the Givental group action. 

\begin{theorem}\label{thm:MainII}
Let $\alpha$ be a hypercommutative algebra and let $R(z)$ be a trivialisation of the trivial circle action. The pullback hypercommutative algebra ${\widetilde{G}}^*(\alpha, R(z)-1)$  is equal to the hypercommutative algebra, or CohFT, obtained by the Givental group action of the element $R(z)$. 
\end{theorem}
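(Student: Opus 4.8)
The plan is to upgrade the infinitesimal identification of Theorem~\ref{thm:MainGiv=LinftyAction} to the group level by an integration argument: I would exhibit both ${\widetilde{G}}^*(\alpha, R(z)-1)$ and the Givental group action of $R(z)$ as the value at time $t=1$ of the flow of one and the same vector field on the space of hypercommutative algebras, and then invoke uniqueness of solutions of the corresponding ordinary differential equation.

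First I would make precise that pulling back along $\widetilde{G}$ is a gauge flow. By its very construction, the Givental morphism $G\colon\HyperCom_\infty\to\trBV_\infty$ is obtained by gauging the universal Maurer--Cartan element of $\mathfrak{l}_{\trBV}$ in the direction $\lambda=-\ln(1+s\,\rho)$ and integrating up to time $t=1$ (Lemma~\ref{lem:CVt=1} and Proposition~\ref{prop:MCHypercom}). Since gauge symmetries in an $L_\infty$-algebra are natural with respect to post-composition by a morphism of operads, applying the structure map of a given algebra $(\alpha,\phi,\rho)$ transports this universal flow to the gauge flow of $\alpha+\phi+\rho$ inside $\mathfrak{l}_{\trBV}$. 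Specialising to the trivial circle action $\phi=0$ and to a trivialisation with $1+s\,\rho=R(z)$, I would conclude that ${\widetilde{G}}^*(\alpha,R(z)-1)$ is precisely the $\HyperCom$-component of the time-one gauge flow $e^{\lambda}.(\alpha+\rho)$, which Proposition~\ref{prop:MCHypercom} guarantees is an honest hypercommutative algebra.

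Next I would push this flow down to the smaller $L_\infty$-algebra $\mathfrak{l}_{\BV}=\g_\Delta\oplus\g_{\HyperCom}$. Using that $\mathfrak{l}_{\trBV}\cong\g_{\HyperCom}\oplus\g_\Delta\oplus s^{-1}\g_\Delta$ is an extension of $L_\infty$-algebras with the trivialisation summand $s^{-1}\g_\Delta$ abelian, the canonical projection forgetting $s^{-1}\g_\Delta$ is a morphism of $L_\infty$-algebras, hence intertwines the two gauge flows; as $\lambda\in\g_\Delta$ and the projection annihilates $\rho$, the $\HyperCom$-component of $e^{\lambda}.(\alpha+\rho)$ equals the time-one gauge flow of $\alpha$ by $\lambda$ computed entirely inside $\mathfrak{l}_{\BV}$. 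The gain is that, by Lemma~\ref{lem:THCom}, this flow remains within (strict) hypercommutative algebra structures, the weight-$[1]$ condition being preserved at every time, so that Theorem~\ref{thm:MainGiv=LinftyAction} applies pointwise along the whole integral curve and identifies the infinitesimal gauge action with the infinitesimal Givental action at each point. Consequently $t\mapsto e^{t\lambda}.\alpha$ and the Givental flow of $\widehat{\lambda}$ issued from $\alpha$ satisfy the same ODE with the same initial datum; they coincide, and at $t=1$ the gauge flow equals the Givental group action of the corresponding group element.

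The step I expect to be the main obstacle is exactly this passage between the two convolution algebras: the integral curve inside $\mathfrak{l}_{\trBV}$ genuinely runs through Maurer--Cartan elements with non-zero trivialisation component $\rho(t)$, to which Theorem~\ref{thm:MainGiv=LinftyAction} does not apply directly, so the whole argument rests on the projection $\mathfrak{l}_{\trBV}\to\mathfrak{l}_{\BV}$ being a genuine $L_\infty$-morphism commuting with the gauge action (equivalently, on checking directly that the $\HyperCom$-component of the $\mathfrak{l}_{\trBV}$-flow solves the $\mathfrak{l}_{\BV}$-flow equation). A second, more clerical, obstacle is to fix the exponential/logarithm dictionary and the signs so that the trivialisation datum $R(z)-1$ yields the action of $R(z)$ itself rather than of $R(z)^{-1}$; this amounts to tracking the conventional identification between a homotopy trivialisation and its induced element of the Givental group through the relation $\lambda=-\ln(1+s\,\rho)$.
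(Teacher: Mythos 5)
Your proposal is correct and follows essentially the same route as the paper: both arguments identify the two sides of the equality as the time-one integration of one and the same vector field, using Theorem~\ref{thm:MainGiv=LinftyAction} for the infinitesimal identification and the gauge-flow construction of $\widetilde{G}$ (Lemma~\ref{lem:CVt=1}, Proposition~\ref{prop:MCHypercom}) for the pullback side. The details you supply --- naturality of gauge flows under pushforward along operad morphisms, the strict $L_\infty$-projection $\mathfrak{l}_{\trBV}\to\mathfrak{l}_{\BV}$, and preservation of strict hypercommutative structures along the flow via Lemma~\ref{lem:THCom} --- are precisely the steps the paper's very terse proof leaves implicit.
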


\begin{proof}
In Theorem~\ref{thm:MainGiv=LinftyAction}, we proved the equality between the infinitesimal Givental action of $r(z)$ on the CohFT $\alpha$ and the infinitesimal gauge symmetry $\ell_1^\alpha(r(z))$. The integration at time $t=1$ of
the second one is equal to $\widetilde{G}^*(\alpha, \rho)$ and the integration at time $t=1$ of the first one is equal to the Givental 
group action of $R(z)$ on the CohFT $\alpha$.
\end{proof}

In the same way as at the end of Section~\ref{sec:GaugeInterpretation}, a direct consequence of this theorem is that the Givental group action on hypercommutative algebras extends naturally to  homotopy hypercommutative algebras as follows. The subgroup of the Givental group $1+z\End(A)[[z]]$ (formal Taylor loops of $GL(A)$) made up of series commuting with the differential $d_A$ is the group of $\infty$-isotopies of the trivial circle action.

\begin{definition}\label{def:DefinitionGroupForHomotopyHypercom}
Let $\alpha$ be a homotopy hypercommutative algebra structure on a chain complex $A$  and let $R(z)\in 1+z\End(A)[[z]] $  be a degree $0$ element  commuting with $d_A$. The \emph{higher Givental action} of $R(z)$ on $\alpha$ is defined by:
$$R(z).\alpha:=\widetilde{G}^*(\alpha, R(z)-1)\ . $$
\end{definition}

Notice that, even if $\infty$-isotopies of the trivial circle action form a group, this action is \emph{not} a group action but rather an $\infty$-groupoid action since it is defined by integrating an $L_\infty$-algebra and not a Lie algebra, cf. \cite{Getzler09}. We also remark that Definition \ref{def:DefinitionGroupForHomotopyHypercom} can be equivalently given using the automorphism $\mathcal{G}$, since $\mathcal{G}^*(\alpha, R(z)-1)=(R(z).\alpha, R(z)-1)$. \\

Theorem~\ref{thm:MainII} actually proves an extended version of the claim made by M. Kontsevich in 2003 that the Givental group action on CohFT's is equal to a change of trivialisation of the circle action. 
The results of this section show that 
 the (higher) Givental action on (homotopy) hypercommutative algebras is an ($\infty$-groupoid) action of trivialisations of the trivial circle action.

On the level of  hypercommutative algebras, a proof of Kontsevich's claim is given in \cite{KhoroshkinMarkarianShadrin13}. It is based on lifts of various structures to a bigger underlying space where the circle action is trivialised. 
The final result does not depend on the choices of lifts, but the proof is not functorial. The present work provides us with a functorial proof of Kontsevich's claim. 

To conclude, at the higher homotopy level, the Givental action admits an interpretation which is simpler (as the underlying circle action is trivial, not just trivialised) and fully functorial.

\bibliographystyle{amsalpha}
\bibliography{bib}

\end{document}